\numberwithin{equation}{section}
\theoremstyle{plain}
\newtheorem{theorem}[equation]{Theorem}
\newtheorem{corollary}[equation]{Corollary}
\newtheorem{lemma}[equation]{Lemma}
\newtheorem{prop}[equation]{Proposition}
\newtheorem{definition}[equation]{Definition}
\newtheorem{example}[equation]{Example}
\newtheorem{remark}[equation]{Remark}
\newtheorem{observation}[equation]{Observation}
\newtheorem{mainthm}{Theorem}
\DeclareMathOperator{\C}{\mathcal{C}}
\DeclareMathOperator{\M}{\mathcal{M}}
\DeclareMathOperator{\N}{\mathcal{N}}
\DeclareMathOperator{\D}{\mathcal{D}}
\DeclareMathOperator{\coll}{Coll}
\DeclareMathOperator{\I}{\mathcal{I}} 
\DeclareMathOperator{\Mod}{Mod}
\DeclareMathOperator{\Q}{\mathtt{Q}}
\DeclareMathOperator{\R}{\mathtt{R}}
\DeclareMathOperator{\unit}{\mathbf{1}}
\DeclareMathOperator{\End}{\mathit{End}}
\DeclareMathOperator{\Hom}{\mathit{Hom}}
\DeclareMathOperator{\Map}{Map}
\DeclareMathOperator{\Mor}{Mor}
\DeclareMathOperator{\id}{id}
\DeclareMathOperator{\Ho}{Ho}
\newcommand{\cat}{\mathcal}    
\newcommand{\cA}{{\cat A}}      
\newcommand{\cC}{{\cat C}}
\newcommand{\cD}{{\cat D}}
\newcommand{\cQ}{{\cat Q}}
\newcommand{\cR}{{\cat R}}
\newcommand{\cS}{{\cat S}}
\newcommand{\cP}{{\cat P}}
\newcommand{\Sets}{{\cat Set}}
\newcommand{\cCat}{{\cat Cat}}
\newcommand{\sSets}{s{\cat Set}}
\newcommand{\Ab}{{\cat Ab}}
\newcommand{\multi}{{\cat Multi}}
\DeclareMathOperator{\obj}{obj} 
\DeclareMathOperator{\Alg}{Alg}
\newcommand{\ovcat}{\downarrow}
\newcommand{\ra}{\rightarrow}                   
\newcommand{\lra}{\longrightarrow}              
\newcommand{\lla}{\longleftarrow}               
\newcommand{\llra}[1]{\stackrel{#1}{\lra}}      
\newcommand{\llla}[1]{\stackrel{#1}{\lla}}      
\newcommand{\we}{\llra{\sim}}                   
\newcommand{\bwe}{\llla{\sim}}
\newcommand{\cof}{\rightarrowtail}              
\newcommand{\fib}{\twoheadrightarrow}           
\newcommand{\bfib}{\twoheadleftarrow} 
\newcommand{\bcof}{\leftarrowtail}             
\newcommand{\trfib}{\stackrel{\sim}{\fib}}
\newcommand{\trcof}{\stackrel{\sim}{\cof}}
\newcommand{\btrfib}{\stackrel{{\sim}}{\bfib}}
\newcommand{\btrcof}{\stackrel{{\sim}}{\bcof}}
\numberwithin{equation}{section}
\begin{document}

\title{Spaces of Operad Structures}         

\author[M.D. Robertson]{Marcy D. Robertson}
\address{University of Western Ontario\\
Department of Mathematics\\
Middlesex College\\ London, Ontario
\\Canada}
\email{mrober97@uwo.ca}
\begin{abstract}The purpose of this paper is to study the derived category of simplicial multicategories with arbitrary sets of objects (also known as, colored operads in simplicial sets). Our main result is a derived Morita theory for operads--where we describe the derived mapping spaces between two multicategories $\cP$ and $\Q$ in terms of the nerve of a certain category of $\cP$-$\Q$-bimodules. As an application, we show that the derived category possesses internal $\Hom$-objects. \end{abstract} 
\maketitle
\sloppy
Operads are combinatorial devices that encode families of algebras defined by multilinear operations and relations. Common examples are the operads $\mathbb{A}$, $\mathbb{C}$ and $Lie$ whose
algebras are associative, associative and commutative, and Lie algebras, respectively. Morphisms between operads systematically encode relations between different kinds of algebras. A well-studied example is the sequence $Lie\lra\mathbb{A}\lra\mathbb{C}$ which encodes the property that any commutative algebra is an associative algebra and that commutators in an associative algebra yield a Lie algebra. Multicategories, also known as colored operads, encode the laws of more complicated algebraic structures such as operadic modules, enriched categories, and even categories of operads themselves. In particular, multicategories provide a device for systematically studying morphisms between operads.\

Operads are also a generalization of classical rings to a homotopy theoretic setting. Multicategories are simply operads with ``many objects,'' analogous to pre-additive categories being rings with ``many objects.'' Taking this point of view, we have many tools available to study operads, including their representation theory. The main purpose of this paper is to prove a type of Morita theory for multicategories. We will be more explicit in Section~\ref{Moritatheory}, but the general idea of Morita theory is that equivalences between categories of representations ${}_{R}\Mod\lra {}_{S}\Mod$ correspond to some, geometrically meaningful, notion of Morita equivalence between rings $R\lra S$. Moreover, these Morita equivalences are completely characterized by families of $R$-$S$-bimodules and thus are easy to identify and study. \

To elaborate, recall that given an Abelian group $G$, we know that the group $\End(G)$ has a natural ring structure. It is well knows that there exists a natural bijective correspondence between $R$-module structures on $G$ and ring homomorphisms $R\lra \End(G)$. Similarly, we know that given $R$-modules $M$ and $N$, the set of Abelian group homomorphisms $Hom_{\Ab}(M,N)$ has a natural structure as an $R$-$R$-bimodule, and there exists a bijective correspondence between $R$-module homomorphisms $M\rightarrow N$ and $R$-$R$-bimodule maps $R\rightarrow Hom_{\Ab}(M,N).$ Morita theory proves that functors between ${}_{R}\Mod$ and ${}_{S}\Mod$ which commute with colimits are in bijective correspondence with $R$-$S$-bimodules. \ 

All of these facts have a direct analogue in operadic algebra, namely that given any space $X$, we can define the endomorphism operad $\End_{X}$ characterized by the natural bijective correspondence between $\cP$-algebra structures on $X$ and operad homomorphisms $\cP\rightarrow\End_{X}.$ Given two $\cP$-algebras $A$ and $B$, there exists a $\cP$-$\cP$-bimodule $\End_{A,B}(n):=Hom_{\C}(A^{\otimes n},B)$ such that $\cP$-algebra maps $A\rightarrow B$ are in bijective correspondence with $\cP$-$\cP$-bimodule maps $\cP\rightarrow \End_{A,B}.$\ 

Operads, however, are homotopy theoretic objects, and thus we want to add the phrase ``up to homotopy'' to every statement in this discussion. It follows that our main objects of study are the \emph{spaces of morphisms} between two multicategories $\cP$ to $\Q$ up to weak equivalence, i.e. the \emph{homotopy function complex} $\Map^{h}(\cP,\Q)$ (see Section~\ref{MappingSpaces}). The main result of this paper is the following. 

\begin{mainthm}The derived mapping space $\Map^h(\cP,\Q)$ is weakly homotopy equivalent to the moduli space of right quasi-free $\cP$-$\Q$-bimodules, ${}_{\cP}\M_{\Q}$.\end{mainthm}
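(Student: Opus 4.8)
\medskip
\noindent\emph{Proof strategy.}\quad The plan is to reduce the theorem to two principles: first, that $\Map^{h}(\cP,\Q)$ is the derived space of $\cP$-$\Q$-bimodule structures on the unit right $\Q$-module; and second, that such a derived space of structures is the homotopy fibre of the map of classification spaces induced by forgetting the structure, which is exactly how the moduli space ${}_{\cP}\M_{\Q}$ is built. The bridge is the strict correspondence recalled in Section~\ref{Moritatheory}: an operad morphism $f\colon\cP\lra\Q$ is precisely a $\cP$-$\Q$-bimodule structure on the right $\Q$-module $\Q$ with its canonical free rank-one right action, naturally in $\cP$, $\Q$, and $f$. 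At the point-set level this is already the statement; the work is to derive it correctly.

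First I would fix the ambient model categories. Right $\Q$-modules carry a model structure transferred from collections of simplicial sets along the free functor $X\mapsto X\circ\Q$, with weak equivalences and fibrations detected on underlying collections; its cofibrant objects are the retracts of cell complexes built from free modules, so in particular every right quasi-free module is cofibrant. Likewise the $\cP$-$\Q$-bimodules are the algebras over the monad $M\mapsto\cP\circ M\circ\Q$ on collections --- equivalently, over a colored operad in simplicial sets --- and so inherit a transferred model structure. Forgetting the bimodule structure down to the underlying right $\Q$-module gives the ``structure / underlying object'' adjunction at the top of this picture; deriving it in the usual way identifies $\Map^{h}(\cP,\Q)$, after cofibrant--fibrant replacement of $\cP$ and $\Q$, with the derived space of $\cP$-$\Q$-bimodule structures on a model of the free rank-one right $\Q$-module. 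The comparison map $\Map^{h}(\cP,\Q)\lra{}_{\cP}\M_{\Q}$ is then assembled, over a cosimplicial resolution of $\cP$, by sending a morphism $f$ to the unit bimodule ${}_{f}\Q$ (which is tautologically right quasi-free and comes with the identity trivialization of its underlying right module), or, where functoriality demands it, to a two-sided bar resolution thereof.

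Next I would invoke a many-object, simplicial version of the principle that the derived space of algebra structures over a suitably cofibrant colored operad (or monad) on a fixed object $X$ is the homotopy fibre over $X$ of the classification-space map $N\bigl(w\,\Alg\bigr)\lra N\bigl(w\,\cM\bigr)$ induced by the forgetful functor. Applied to the monad (equivalently, colored operad) governing $\cP$-$\Q$-bimodules and to $X$ the free rank-one right $\Q$-module, this exhibits the derived space of bimodule structures --- hence, by the previous step, $\Map^{h}(\cP,\Q)$ --- as the homotopy fibre of $N\bigl(w\,{}_{\cP}\Mod_{\Q}^{\mathrm{qf}}\bigr)\lra N\bigl(w\,\Mod_{\Q}^{\mathrm{qf}}\bigr)$ over the free rank-one module, the superscript recording that we restrict to right quasi-free objects so that the forgetful functor is homotopically meaningful. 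That homotopy fibre is ${}_{\cP}\M_{\Q}$ as constructed in Section~\ref{Moritatheory}; chaining the identifications, and checking they are compatible with the comparison map above, completes the argument.

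The crux --- and the main obstacle --- is making the second principle genuinely hold in this many-object simplicial setting, which is a rectification problem. A right quasi-free bimodule whose underlying right module is trivialized as the free rank-one module carries, a priori, only a homotopy-coherent left $\cP$-action; one must straighten this to an honest operad morphism $\cP\lra\Q$ without altering the weak homotopy type and with a contractible space of straightenings, so that the homotopy fibre is truly swept out by strict morphisms together with their derived automorphisms. I would carry this out by an obstruction-theoretic induction up the skeletal (arity) filtration of the source operad $\cP$, the decisive input being that right quasi-freeness --- cofibrancy as a right $\Q$-module, against a cofibrant replacement of $\cP$ --- turns each successive lifting problem into one against a trivial cofibration, hence solvable with contractible space of solutions. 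The residual technical work is to verify that the monad governing bimodules satisfies the cofibrancy hypotheses needed for the classification-space machinery to run unchanged in the category of many-object simplicial operads; this, together with the rectification, is where the bulk of the effort goes.
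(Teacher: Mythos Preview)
Your strategy diverges from the paper's, and the central identification you rely on does not hold as stated. You assert that the homotopy fibre of $N(w\,{}_{\cP}\Mod_{\Q}^{\mathrm{qf}})\to N(w\,\Mod_{\Q}^{\mathrm{qf}})$ over the rank-one free module ``is ${}_{\cP}\M_{\Q}$ as constructed in Section~\ref{Moritatheory}.'' But in the paper ${}_{\cP}\M_{\Q}$ (more precisely its quasi-free variant ${}_{\cP}\M^*_{\Q}$) is the nerve of the \emph{entire} category of weak equivalences between right quasi-free $\cP$-$\Q$-bimodules: it is the total space of your fibration, not its fibre. Fibre and total space coincide only if the base $N(w\,\Mod_{\Q}^{\mathrm{qf}})$ is contractible, and you never argue this. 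It is plausible (pointedness rigidifies the module $\Q$ over itself), but it is precisely the content needed and it is missing. Without it, your chain of identifications computes a space of structures on a fixed object, not the moduli space the theorem names.

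The paper takes a different route and sidesteps both this issue and your rectification problem. Rather than forgetting to right $\Q$-modules, it builds a Quillen pair $(E,U)$ between pointed $\cP$-$\Q$-bimodules and multicategories under $\cP+\Q$, with $E$ an enveloping functor (Lemma~\ref{enveloping}). The main technical work (Proposition~\ref{morita1}, Lemma~\ref{morita2}, Proposition~\ref{mortia3}, Theorem~\ref{MoritaTheorem}) shows, using the Hochschild resolution of $\cP$ as an explicit cofibrant replacement, that for any right quasi-free bimodule $M$ the composite $\Q\to\cP+\Q\to\mathbb{L}E(M)$ is a weak equivalence of multicategories. This identifies the moduli of quasi-free bimodules directly with a zig-zag category $[\cP\lra R\bwe\Q]$, which models $\Map^h(\cP,\Q)$ by the Dwyer--Kan machinery of Section~\ref{MappingSpaces}. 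No obstruction-theoretic straightening is needed: the passage from homotopy-coherent to strict data is absorbed into the derived functor $\mathbb{L}E$ and the zig-zag model. Your Rezk-style classification-space argument is a reasonable alternative line, but as written it conflates fibre and total space at the decisive step, and the skeletal rectification you flag as the ``crux'' is work the paper's approach does not have to do.
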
 

In particular, we can state that two objects are Morita equivalent if, and only if, they lie in the same connected component of the space $\Map^{h}(\cP,\Q)$. Theorem $A$ is therefore an operadic version of the derived Morita theory of To\"{e}n~\cite{Toen}. This is different than, but entangled with, the theory which is called derived Morita theory by Berger-Moerdijk in their paper~\cite{BM08}. More explicitly, Berger and Moerdijk provide a list of conditions that imply two operadic algebras have equivalent derived categories. These conditions make an appearance in this paper, in Lemma~\ref{morita2} and Proposition~\ref{morita1}, but in a different form, as the desired end is not to study the derived category, but rather the associated simplicial category obtained via Dwyer-Kan localization. \ 

\subsection{Related Work and Applications}The main technical tool in this paper is a cofibrantly generated model structure on the category of all small simplicial multicategories~\cite{Me1}. The weak equivalences are a blend of weak equivalences of operads (cf.\cite{Rezk, BM07}) and categorical equivalences. Many of the results in this paper can be extended to more operads which take values other monoidal model categories, as long as one makes additional some necessary technical adjustments. \


We also note that several results in this paper are generalizations of a recent preprint by Dwyer-Hess~\cite{DH}. They restrict to non-symmetric operads with some connectivity, and use a description of the mapping spaces to prove that the space of tangentially straightened long knots is equivalent to the double loop space of a moduli space of bimodules. 

Our use of symmetric actions in this paper allows us to say a little more than~\cite{DH} about the structure of the derived mapping spaces between operads, in particular, we can describe the internal hom-objects of the homotopy category of all small simplicial multicategories. In addition, we also prove a cosimplicial model for the mapping space, making precise an observation by Berger-Moerdijk in their paper~\cite[6]{BM07}. This cosimplicial model provides filtrations of the derived mapping space  which are necessary for computations in ~\cite{Me3}.\


\subsection{Notation and Conventions}Multicategories are frequently referred to as \emph{colored operads}, or simply \emph{operads} in the literature. There are times in this paper where the author uses the word operad or multicategory (interchangeably) without explicitly mentioning sets of objects. When it plays an important role, sets of objects will always be specified. At all other times, the result holds for general sets of objects. \

We will use interchangeably the notation for a category and its nerve. As such, we follow the convention that a functor will be a \emph{weak equivalence} if it induces a weak homotopy equivalence on the respective nerves. We will always use the phrase \emph{weak homotopy equivalence} to refer to a weak equivalence of simplicial sets in the standard (Kan) model structure. Given a model category $\C$ it makes sense to consider the (not full) subcategory $w\C$ of $\C$ which is the category with the same objects as $\C$ and morphisms the weak equivalences between objects in $\C$. We call $w\C$ the \emph{moduli category} of $\C$. The \emph{moduli space} of $\C$ will be the nerve of $w\C$.\

An adjoint pair $E:\M\leftrightarrows\mathcal{B}:U$ of functors between model categories is a \emph{Quillen pair} if $E$ preserves cofibrations and trivial cofibrations (equivalently, if $U$ preserves fibrations and trivial fibrations). The pair $(E,U)$ forms a Quillen equivalence if for all cofibrant $B\in\mathcal{B}$ and fibrant $M\in\M$, a map $EM\lra B$ is a weak equivalence in $\mathcal{B}$ if, and only if, the adjoint $M\lra UB$ is a weak equivalence in $\M$.\

Given that the right adjoint $U:\mathcal{B}\longrightarrow\M$ preserves all weak equivalences, we have an induced functor on the moduli categories $wU:w\mathcal{B}\rightarrow w\M$, and consequently an induced map of moduli spaces $wU:w\mathcal{B}\longrightarrow w\M$. If $U$ is part of a Quillen pair then the induced morphism $wU$ is a weak homotopy equivalence.\
    
If $F:\C\rightarrow\D$ is a functor and $X$ is an object of $\D$, $F\searrow X$ denotes the \emph{over category} of $F$ with respect to $X$. Objects of this category are pairs $(Y, g)$ where $Y\in \C$ and $g$ is a map $F(Y)\rightarrow X$ in $\D$. A morphism $(Y,g)\rightarrow(Y', g')$ is a map $Y\to Y'$ in $\C$ rendering the appropriate diagram commutative. The dual notion of \emph{under category} is denoted $X\searrow F$. If $F$ is the identity functor on $\D$, we write $\D\searrow X$, respectively, $X\searrow\D$. We take the following argument to be standard. Suppose that $F:\C\rightarrow\D$ is a functor such that for every morphism $h:X\rightarrow X'$ in $\D$ the map $F \searrow X\rightarrow F\searrow X'$ induced by composition with $h$ is a weak homotopy equivalence. Then one can apply Quillen's Theorem $B$ to show that for any $X\in \D$ the homotopy fiber of (the nerve of) $F$ over the vertex of $\D$ represented by $X$ is naturally weakly homotopy equivalent to $F\searrow X$. A similar result holds with over categories replaced by under categories (see~\cite[5.2]{GJ}). \

\section{Homotopy Function Complexes}\label{MappingSpaces}Given two objects $X$ and $Y$ in a model category $\C$, there is an associated simplicial set $\Map^h_{\C}(X,Y)$ called a \emph{homotopy function complex} or \emph{derived mapping space} from $X$ to $Y$. In~\cite{DK1,DK2,DK3} Dwyer and Kan show that $\Map^{h}(X,Y)$ can be computed as the nerve of a category of ``zig-zags,'' i.e. a category whose objects are zig-zags $[X\btrfib U\lra V \btrcof Y]$ and where the maps are natural transformations of diagrams which are the identity on $X$ and on $Y$. There are many variations of these zig-zag categories, including those with objects $[ X \bwe U \lra V \bwe Y]$ , or zig-zags $[ X \lra U \btrfib V \lra Y]$. As it happens, all of these variations will have homotopy equivalent nerves, and thus all of these variations have nerves homotopy equivalent to $\Map^{h}(X,Y)$. We will also require several, more rigid, models for $\Map^{h}(X,Y)$.\

Let $c\C$ denote the Reedy model structure on the category of cosimplicial objects in $\C$~\cite[Chapter 15]{Hir03}. For any object $X$ in $\C$ we write $cX$ for the constant cosimplicial object consisting of $X$ in every dimension with identity maps for all co-face and co-degeneracies. A \emph{cosimplicial resolution} of $X$ in $\C$ is a Reedy cofibrant replacement $QX^\bullet\rightarrow cX$ in $c\C$. Given such a cosimplicial resolution and an object $Y$ in $\C$ we form the simplicial set $\C(Q^\bullet X,Y)$ given by $[n]\mapsto\C(Q^nX,Y)$. If $Y\rightarrow Y'$ is a weak equivalence between fibrant objects then the induced map $\C(Q^\bullet X,Y)\longrightarrow\C(Q^\bullet X,Y')$ is a weak equivalence of simplicial sets.\ 

Now, for a fixed object $X\in\C$, let $Q(X)$ denote the category whose objects are pairs $[Q,Q\rightarrow X]$ where $Q$ is some cofibrant object in $\C$ and $Q\rightarrow X$ is a weak equivalence. For any object $Y$ in $\C$, we have a functor $$\C(-,Y):Q(X)^{op}\longrightarrow\Sets$$which sends $[Q,Q\rightarrow X]$ to $\C(Q,Y)$. We can regard this functor as taking values in $\sSets$ by composing with the embedding $\Sets\rightarrow\sSets$. We can now consider the simplicial set $\textrm{hocolim}_{Q(X)^{op}}\C(-,Y)$, where here we model the $\textrm{hocolim}$ functor by first taking the simplicial replacement of a diagram and then applying geometric realization. In dimension $n$ the simplicial replacement consists of diagrams of weak equivalences $Q_0\leftarrow Q_1\leftarrow...\leftarrow Q_n$ \emph{over} X, where each $Q_i\rightarrow X$ is in $Q(X)$, together with a map $Q_0\rightarrow Y$. In other words, the simplicial replacement is the same as the nerve of the category for which an object is a zig-zag $[X\bwe Q \ra Y]$, where $Q$ is cofibrant and $Q\ra X$ is a weak equivalence. The maps from $[X\bwe Q \ra Y]$ to $[X\bwe Q' \ra Y]$ are just maps $Q'\ra Q$ which fit into the usual commutative diagram. If $Y$ is fibrant, it is well known that this simplicial set is weakly equivalent to $\Map^h(X,Y)$ (see~\cite{D,DK3}). A dual argument shows that, if $X$ is cofibrant, $\Map^{h}(X,Y)$ is weakly equivalent to the nerve of the zig-zag category $[X\ra Q\bwe Y]$.\ 

So that we do not have to limit ourselves to only studying mapping spaces with cofibrant source or fibrant target we make use of the following proposition.\

\begin{prop}\label{rigidify1}~\cite[2.6]{DH}Let $\C$ be a left proper model category, and let $X$ and $Y$ be objects in $\C$ such that $X_c\coprod Y\longrightarrow X\coprod Y$ is a weak equivalence. Then $Map^h(X,Y)$ is weakly homotopy equivalent to the nerve of the zig-zag category $[X\ra Q\bwe Y]$.\end{prop}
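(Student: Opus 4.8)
The plan is to reduce to the case of a cofibrant source, where the statement is already known, and then transport that description back along a cofibrant replacement of $X$; left properness and the coproduct hypothesis are exactly what is needed to keep the pushouts appearing in that transport homotopically well behaved. So I would first fix a cofibrant replacement $p\colon X_c\we X$. Since $p$ is a weak equivalence, $\Map^h(X,Y)\he\Map^h(X_c,Y)$, and since $X_c$ is cofibrant, $\Map^h(X_c,Y)$ is weakly equivalent to the nerve of the zig-zag category $[X_c\ra Q\bwe Y]$ by the identification recalled above. Writing $\cZ(A,Y)$ for the zig-zag category $[A\ra Q\bwe Y]$ — equivalently, the full subcategory of the under-category $(A\amalg Y)\searrow\C$ spanned by the $A\amalg Y\ra Q$ whose restriction to $Y$ is a weak equivalence — it then suffices to show that the precomposition functor $\phi\colon\cZ(X,Y)\ra\cZ(X_c,Y)$, which sends $[X\ra Q\bwe Y]$ to the zig-zag with first map the composite $X_c\xrightarrow{p}X\ra Q$, induces a weak homotopy equivalence on nerves.

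Next I would cut out the subcategory $\cZ^c(A,Y)\subseteq\cZ(A,Y)$ of those $[A\amalg Y\ra Q]$ for which the structure map $A\amalg Y\ra Q$ is a cofibration. Fixing a functorial factorization on $\C$ (available in the setting of this paper, where $\C$ is cofibrantly generated), functorially factoring $A\amalg Y\ra Q$ as $A\amalg Y\cof\widetilde Q\trfib Q$ defines a functor $r\colon\cZ(A,Y)\ra\cZ^c(A,Y)$ (the composite $Y\ra\widetilde Q$ is again a weak equivalence by two-out-of-three), and the maps $\widetilde Q\trfib Q$ assemble into a natural transformation $\iota\circ r\Rightarrow\id$ with $\iota$ the inclusion. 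Now — and this is where the hypothesis is used — I would define a pushout functor $\Phi\colon\cZ^c(X_c,Y)\ra\cZ^c(X,Y)$ sending $[X_c\amalg Y\ra Q_0]$ to $[X\amalg Y\ra\overline{Q}_0]$, where $\overline{Q}_0:=Q_0\amalg_{X_c\amalg Y}(X\amalg Y)$ is the pushout along $X_c\amalg Y\xrightarrow{p\amalg\id}X\amalg Y$. Since $X_c\amalg Y\ra Q_0$ is a cofibration, the leg $X\amalg Y\ra\overline{Q}_0$ is again a cofibration, while the leg $Q_0\ra\overline{Q}_0$, being the cobase change of the weak equivalence $X_c\amalg Y\we X\amalg Y$ along a cofibration, is a weak equivalence by left properness; two-out-of-three then gives that $Y\ra\overline{Q}_0$ is a weak equivalence, so $\Phi$ is well defined, and the legs $Q_0\ra\overline{Q}_0$ assemble into a natural transformation $\iota_{X_c}\Rightarrow\phi\circ\iota_X\circ\Phi$.

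To finish I would set $\Psi:=\iota_X\circ\Phi\circ r\colon\cZ(X_c,Y)\ra\cZ(X,Y)$ and check that $N\phi$ and $N\Psi$ are mutually inverse homotopy equivalences. One composite is handled by whiskering: the natural transformation $\iota_{X_c}\circ r\Rightarrow\phi\circ\iota_X\circ\Phi\circ r=\phi\circ\Psi$ together with $\iota_{X_c}\circ r\Rightarrow\id$ gives $N\phi\circ N\Psi\he\id$ on nerves, since a natural transformation induces a homotopy on nerves. For the other composite, given $z=[X\ra Q\bwe Y]$ the pushout $\overline{\widetilde Q}$ appearing in $\Psi\phi(z)$ maps canonically to $Q$ via the trivial fibration $\widetilde Q\trfib Q$ and the structure map $X\amalg Y\ra Q$ of $z$ (which agree on $X_c\amalg Y$); these comparison maps assemble into a natural transformation $\Psi\circ\phi\Rightarrow\id$, giving $N\Psi\circ N\phi\he\id$. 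Hence $N\phi$ is a weak homotopy equivalence and $N\cZ(X,Y)\he\Map^h(X_c,Y)\he\Map^h(X,Y)$, as desired.

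The step I expect to be the crux — and the reason this is more than invariance of $\Map^h$ under weak equivalences — is getting a cofibration into position so that left properness can be invoked: the structure maps of arbitrary zig-zags need not be cofibrations, and $\phi$ does not preserve the subcategory on which they are, so one cannot simply restrict $\phi$ to $\cZ^c$. Routing the comparison through the pushout functor $\Phi$ (which does respect cofibrancy of the structure map) and reconciling it with $\phi$ by way of $r$ and the two natural transformations is what makes everything fit together, and the coproduct hypothesis $X_c\amalg Y\we X\amalg Y$ is used at precisely the one spot where it is unavoidable, namely in showing the pushout leg $Q_0\ra\overline{Q}_0$ is a weak equivalence. (If one prefers not to assume functorial factorizations, $r$ should be replaced by a cofinality argument for $\cZ^c(A,Y)\subseteq\cZ(A,Y)$, which is routine.)
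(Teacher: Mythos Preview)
The paper does not supply its own proof of this proposition; it is stated with a bare citation to \cite[2.6]{DH}. Your argument is correct and is, in outline, exactly the argument one finds there: replace $X$ by a cofibrant $X_c$, use the known identification of $\Map^h(X_c,Y)$ with the nerve of $[X_c\ra Q\bwe Y]$, and then compare the two zig-zag categories by pushing out along $X_c\amalg Y\to X\amalg Y$. Left properness together with the coproduct hypothesis is used at precisely the point you indicate --- to guarantee that the pushout leg $Q_0\ra\overline{Q}_0$ is a weak equivalence --- and the passage through the subcategory $\cZ^c$ of zig-zags with cofibrant structure map is the standard device for putting a cofibration in position so that left properness applies.

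One small remark: in verifying $\Psi\circ\phi\Rightarrow\id$, you should check explicitly that the induced map $\overline{\widetilde Q}\ra Q$ is natural in $z$; this follows from the universal property of the pushout together with functoriality of the chosen factorization, so it is routine, but it is the one place where a reader might pause. Your closing parenthetical about replacing functorial factorizations by a cofinality argument is also correct and worth keeping, since the paper does not explicitly assume functorial factorizations in this section.
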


We will only make brief use of the cosimplicial model for $\Map^{h}(X,Y)$ in this paper, but this is the more convenient model for computation. Let $Q^{\bullet} X\longrightarrow X$ be a cosimplicial resolution of $X$ in $c\C$. We want to relate the simplicial set $\C(Q^{\bullet} X,Y)$ to the zig-zags of categories considered above. For a given simplicial set $K$, let $\Delta K$ be the category of simplices of $K$, i.e. the over category $(S\ovcat K)$, where $S:\Delta \rightarrow \sSets$ is the functor $[n]\mapsto \Delta[n]$. The nerve of $\Delta K$ is naturally weakly equivalent to $K$ (see~\cite[text prior to Prop. 2.4]{D}). There is a functor sending $\Delta \cC(Q^{\bullet}X,Y)$ to another zig-zag category, which sends $([n],Q^nX\ra Y)$ to $[X\btrfib Q^nX \lra Y]$. \

\begin{prop}Let $Q^{\bullet}X\rightarrow X$ be a Reedy cofibrant resolution of $X$ and let $Y$ be a fibrant object of $\C$. Then $\Delta\C(Q^{\bullet}X,Y)$ is weakly equivalent to $\Map^{h}(X,Y)$.\end{prop}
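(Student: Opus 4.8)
The plan is to identify $\Delta\cC(Q^{\bullet}X,Y)$ with the nerve of a zig-zag category already known to model $\Map^h(X,Y)$, using the functor described in the paragraph immediately preceding the statement. First I would recall that $\cC(Q^{\bullet}X,Y)$ is a simplicial set (its $n$-simplices are maps $Q^nX\to Y$), so its category of simplices $\Delta\cC(Q^{\bullet}X,Y)$ has as objects pairs $([n],\,Q^nX\to Y)$, with morphisms given by the simplicial operators $\Delta$. There is a functor $\Phi$ from this category to the zig-zag category whose objects are diagrams $[X\btrfib Q \lra Y]$ with $Q\to X$ a trivial fibration and $Q$ cofibrant (equivalently, to the variant $[X\bwe Q\ra Y]$, which by the discussion in this section has nerve weakly equivalent to $\Map^h(X,Y)$ when $Y$ is fibrant): $\Phi$ sends $([n],\,Q^nX\to Y)$ to $[X\btrfib Q^nX\lra Y]$, using that each $Q^nX$ is cofibrant and $Q^nX\to X$ is a trivial fibration because $Q^\bullet X\to cX$ is a Reedy cofibrant replacement (in particular each $Q^nX$ is cofibrant and the augmentation is a levelwise weak equivalence, and one may arrange it to be a levelwise trivial fibration).

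Next I would show that $\Phi$ induces a weak homotopy equivalence on nerves. The cleanest route is Quillen's Theorem A: I would check that for every object $[X\bwe Q\ra Y]$ of the target zig-zag category, the comma category $\Phi\searrow [X\bwe Q\ra Y]$ (or the relevant fiber/undercategory, depending on which direction $\Phi$ runs) has contractible nerve. This amounts to the statement that the cosimplicial resolution $Q^\bullet X$ is, in a homotopically terminal sense, the universal cofibrant resolution of $X$ — concretely, the space of ways of factoring a cofibrant replacement $Q\to X$ through the levels of $Q^\bullet X$ is contractible. This contractibility is essentially the defining property of cosimplicial resolutions: the simplicial mapping space $\cC(Q^\bullet X, Q)\to \cC(Q^\bullet X, X)$ computes a (weakly contractible) homotopy fiber because $Q\to X$ is a weak equivalence between objects over which the relevant lifting problems can be solved by Reedy cofibrancy of $Q^\bullet X$. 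Alternatively, I could bypass Theorem A entirely by invoking the already-cited fact (just before Proposition~\ref{rigidify1}) that, for $Y$ fibrant, $\cC(Q^\bullet X,Y)$ is \emph{itself} weakly equivalent to $\Map^h(X,Y)$, and then use that the nerve of $\Delta K$ is naturally weakly equivalent to $K$ for any simplicial set $K$ — this reduces the proposition to a single sentence.

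So in fact the shortest proof is: since the nerve of $\Delta K$ is naturally weakly equivalent to $K$ (cited above from~\cite{D}), we have $N(\Delta\cC(Q^\bullet X,Y))\simeq \cC(Q^\bullet X,Y)$, and since $Q^\bullet X\to X$ is a cosimplicial resolution and $Y$ is fibrant, $\cC(Q^\bullet X,Y)\simeq \Map^h(X,Y)$ by the standard identification recalled earlier in this section (see~\cite{DK3,Hir03}); composing these two weak equivalences gives the claim. I would still include the functor $\Phi$ and the Theorem A argument as the conceptual explanation, since the point of the proposition is to exhibit the \emph{zig-zag} shape explicitly (that is what gets used later), but strictly speaking the two-step reduction above suffices.

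The main obstacle I expect is not the topology but the bookkeeping at the boundary between ``$Q^nX\to X$ is a weak equivalence'' and ``$Q^nX\to X$ is a trivial fibration'': the zig-zag models in this section are stated with trivial fibrations $\btrfib$, whereas a general Reedy cofibrant replacement only guarantees levelwise weak equivalences. I would handle this either by noting one can always choose $Q^\bullet X\to cX$ to be a Reedy \emph{trivial} fibration (every object has such a functorial factorization in $c\cC$), or by passing through the softer zig-zag category $[X\bwe Q\ra Y]$ of weak equivalences, which this section has already shown has nerve weakly equivalent to $\Map^h(X,Y)$; the latter avoids the issue altogether at the cost of one extra comparison of zig-zag categories. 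A secondary, purely cosmetic point is checking naturality of all these equivalences in $X$ and $Y$, which follows from the naturality statements already recorded for cosimplicial resolutions earlier in the section.
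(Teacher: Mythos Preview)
Your proposal is correct, and your ``shortest proof'' is exactly the content of the references the paper invokes: the paper does not give an argument at all but simply writes ``The result is proven in \cite{DK3}, but see also \cite[Thm.~2.4]{D}.'' Your two-step reduction---$N(\Delta K)\simeq K$ from \cite{D}, followed by $\cC(Q^\bullet X,Y)\simeq \Map^h(X,Y)$ for fibrant $Y$ from \cite{DK3}---is precisely what those citations unpack, so in that sense you and the paper agree completely.

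Where you go further is in sketching the direct Theorem~A comparison via the functor $\Phi$ to the zig-zag category. This is a genuine alternative route (and is close in spirit to what Dugger actually does in \cite{D}), and it has the advantage of making the zig-zag shape explicit rather than passing through the abstract identification of $\cC(Q^\bullet X,Y)$ with the homotopy function complex. Your caveat about trivial fibrations versus weak equivalences is well-taken and your two fixes (choose the resolution to be a Reedy trivial fibration, or land in the softer $[X\bwe Q\ra Y]$ category) are both standard and sufficient. For the purposes of this paper, though, the two-line citation is all that is needed, and your longer argument, while correct, is more than the paper itself supplies.
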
\begin{proof}The result is proven in \cite{DK3}, but see also \cite[Thm. 2.4]{D}.\end{proof}

\section{Operads and Multicategories}The basic idea of a multicategory is very like the idea of a category, it has objects and morphisms, but in a multicategory the source of a morphism can be an arbitrary finite sequence of objects rather than just a single object.\

A \emph{multicategory}, $\mathcal{P},$ consists of the following data:
\begin{itemize}
  \item a set of objects $\obj(\cP)$;
  \item for each $n\ge0$ and each sequence of objects $x_{1},...,x_{n},x$ a \emph{set} $\mathcal{P}(x_{1},...,x_{n};x)$ of operations which take $n$ inputs ($x_{1},...,x_{n}$) to a single output (the object $x$).\end{itemize}

These operations are equipped with structure maps for units and composition. Specifically, if $I=\{*\}$denotes the one-point set, then for each object $x$ there exists a unit map $\eta_{x}:I\rightarrow\cP(x;x)$ taking $*$ to $1_{x}$, where $1$ denotes the unit of the symmetric monoidal structure on the category $\Sets$. The composition operations are given by maps$$\mathcal{P}(x_{1},...,x_{n};x)\times\mathcal{P}(y_{1}^{1},...,y_{k_{1}}^{1};x_{1})\times\cdots\times\mathcal{P}(y_{1}^{n},...,y_{k_{n}}^{n};x_{n})\longrightarrow\mathcal{P}(y_{1}^{1},...,y_{k_{n}}^{n};x)$$which we denote by $$p,q_{1},...,q_{n}\mapsto p(q_{1},...,q_{n}).$$The structure maps satisfy the associativity and unitary coherence conditions of monoids. A \emph{symmetric multicategory} is a multicategory with the additional property that the operations are equivariant under the permutation of the inputs. Explicitly, for $\sigma\in\Sigma_{n}$ and each sequence of objects $x_{1},...,x_{n},x$ we have a right action of $\Sigma_{n}$, i.e., a morphism $\sigma^{*}:\mathcal{P}(x_{1},\cdots,x_{n};x)\rightarrow\mathcal{P}(x_{\sigma(1)},...,x_{\sigma(n)};x)$. The action maps are well behaved, in the sense that all composition operations are invariant under the $\Sigma_n$-actions, and $(\sigma\tau)^{*}=\tau^{*}\sigma^{*}$.\

In practice, one often uses the following, equivalent, definition of the composition operations, given by:$$\xymatrix{\mathcal{P}(c_{1},\cdots,c_{n};c)\times\mathcal{P}(d_{1},\cdots,d_{k};c_{i})\ar[r]^{{\circ_{i}\,\,\,\,\,\,\,\,\,\,\,}} & \mathcal{P}(c_{1},\cdots,c_{i-1},d_{1},\cdots,d_{k},c_{i+1},\cdots,c_{n};c).}$$ \

All of our definitions will still make sense if  we ask that the $k$-morphisms $\cP_n(x_1,\ldots,x_n;x)$ take values in a symmetric monoidal category other than sets; the examples we are interested in take values in either categories, symmetric spectra or simplicial sets. Multicategories whose operations take values in $\C$ are called \emph{multicategories enriched in $\C$} or \emph{$\C$-multicategories}. In particular, the strong monoidal functor $\Sets\longrightarrow\mathcal{C}$ that sends a set $S$ to the $S$-fold coproduct of copies of the unit of $\C$ takes every multicategory to a $\mathcal{C}$-enriched multicategory.\footnote{Note that a multicategory enriched over small categories can be considered enriched over simplicial sets by applying the nerve functor to the $n$-operations, since the nerve functor preserves categorical products.}\

A morphism between enriched, symmetric multicategories $F:\cP\longrightarrow\cQ$, or \emph{multifunctor}, consists of a \emph{set map} of objects $F_{0}:\obj(\cP)\longrightarrow\obj(\cQ)$ together with a family of $\Sigma_n$-equivariant $\C$-morphisms$$\{F:\cP(d_{1},...,d_{n};d)\longrightarrow\cQ(F(d_{1}),...,F(d_{n});F(d))\}_{d_{1},...,d_{n},d\in\cP}$$which are compatible with the composition structure maps. When $\cP$ and $\Q$ are enriched over simplicial sets, the multifunctor is enriched when the maps on $n$-operations preserve the enrichment. We denote the category of all small symmetric multicategories enriched in $\C$ by $\multi(\C)$ and denote the morphisms between two objects as $\multi(\cP,\Q)$.\ 

\subsection{Structure of $\multi(\C)$}Multicategories are often called colored operads, or just operads(See, for example,~\cite{BM06, BV, May, CGMV}, etc.), but we use the term multicategory in this paper because we want to emphasize the relationship between multicategory theory with classical category theory. Informally, we can say that inside every multicategory lies a category which makes up the linear part (i.e. the $1$-operations). We make this explicit by assigning to each multicategory $\cP$ a category $[\cP]_{1}$ with the same object set as $\cP$ and with morphisms given by $[\cP]_{1}(p,p')=\cP(p;p')$ for any two objects $p,p'$ in $\cP$ (i.e. just look at the operations of $\cP$ which have only one input). The functor $[-]_{1}$ takes all higher operations, i.e. $\cP(p_1,...,p_n;p)$, to be trivial. Composition and identity operations are induced by $\cP$.\

This relationship with category theory is useful in making sense of ideas which do not have obvious meaning in the multicategory setting. For example, we will often want to discuss the ``connected components'' of a multicategory, but it is difficult to say that an $n$-ary operation $\phi$ is an ``isomorphism'' in $\cP$. This is where the relationship between categories and multicategories can be useful, we can say that $\phi$ is an \textbf{isomorphism} in $\cP$ if $[\phi]_{1}$ is an isomorphism in the category $[\cP]_{1}$.\ 

\begin{definition}\label{fullyfaithful}Let $\cP$ and $\Q$ be two multicategories. A multifunctor $F:\cP\rightarrow\Q$ is \emph{essentially surjective} if $[F]_{1}$ is essentially surjective as a functor of categories. We say that $F$ is \emph{full} if for any sequence $p_1,...,p_n,p$ the function $F:\cP(p_1,...,p_n;p)\rightarrow\Q(Fp_{1},...,Fp_{n};Fp)$ is surjective. We say that $F$ is \emph{faithful} if for any sequence $p_1,...,p_n,p$ the function $F:\cP(p_1,...,p_n;p)\rightarrow\Q(Fp_{1},...,Fp_{n};Fp)$ is injective. The multifunctor $F$ is called \emph{fully faithful} if it is both full and faithful.\end{definition}\begin{definition}Let $F:\cP\rightarrow\Q$ be a functor between two symmetric multicategories. We say that $F$ is an \emph{equivalence of multicategories} if, and only if, $F$ is both fully faithful and essentially surjective.\end{definition}


Let $\C$ be the category of simplicial sets with the standard model structure. Given a simplicial category $\cA$, we can form a genuine category $\pi_0(\cA)$ which has the same set of objects as $\cA$ and whose set of morphisms $\pi_{0}(\cA)(x,y):=[\unit, \cA(x,y)]$. This induces a functor $\pi_{0}(-):\cCat(\C)\rightarrow \cCat,$ with values in the category of small categories and, moreover, a functor $\Ho(\cCat(\C))\longrightarrow\Ho(\cCat).$ In other words, any $F:\cC\longrightarrow\cD$ in $\Ho(\cCat(\C))$ induces a morphism $\pi_{0}(\C)\longrightarrow\pi_{0}(\D)$ which is well defined up to a non-unique isomorphism. This lack of uniqueness will not be an issue, as we are only interested in properties of functors which are invariant up to isomorphism.\ 

As with the non-enriched case, we can consider the linear part of a simplicial multicategory $\cP$, $[\cP]_{1}$, which is in this case a simplicial category. Applying the functor $\pi_{0}$ to the simplicial category $[\cP]_{1}$ gives us the \emph{underlying category} of the multicategory $\cP$. In order to cut back on notation, we denote this category by $[\cP]_{1}$ rather than $\pi_{0}([\cP]_{1})$.\ 

\begin{theorem}~\cite{Me1} The category of small $\C$-enriched symmetric multicategories admits a right proper cofibrantly generated model category structure in which a multifunctor $F:\cP\lra\Q$ is a weak equivalence if: \

\begin{description}	
	\item[W1] for any $n\ge 0$ and for any signature $x_1,...,x_n;x$ in $\cP$ the map of $\C$-objects $$F:\cP(x_1,...,x_n;x)\longrightarrow\cQ(Fx_1,...,Fx_n;Fx)$$ is a weak equivalence in the model category structure on $\C$. 	
	\item[W2] the induced functor $[F]_{1}$ is a weak equivalence of categories.
\end{description} A simplicial multifunctor $F:\cP\longrightarrow\Q$ is a fibration if:\begin{description}\label{fibrations}
  	\item[F1] for any $n\ge 0$ and for any signature $x_1,...,x_n;x$ in $\cP$ the map of $\C$-objects $$F:\cP(x_1,...,x_n;x)\lra\Q(Fx_1,...,Fx_n;Fx)$$ is a fibration in the model category structure on $\C$. 
	\item[F2] the induced functor $[F]_{1}$ is a fibration of categories.
\end{description} The cofibrations (respectively, acyclic cofibrations) are the multifunctors which satisfy the left lifting property (LLP) with respect to the acyclic fibrations (respectively, fibrations).\end{theorem}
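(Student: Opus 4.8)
The plan is to realize this as a \emph{transferred}, two-stage cofibrantly generated model structure, in the spirit of Bergner's model structure on simplicial categories and the operadic transfer theorems of Berger--Moerdijk. In the first stage one fixes a set $O$ and works with the category $\multi_O(\C)$ of $\C$-multicategories with object set exactly $O$ and object-preserving multifunctors. Forgetting composition and units gives a forgetful functor from $\multi_O(\C)$ to a product of copies of $\C$ indexed by the signatures $(x_1,\dots,x_n;x)$ over $O$; it has a left adjoint (the free $\C$-multicategory on a $\Sigma$-collection) and preserves filtered colimits, so, $\C$ being cofibrantly generated, Kan's transfer principle produces a cofibrantly generated model structure on $\multi_O(\C)$ whose weak equivalences and fibrations are the multifunctors satisfying W1 and F1 --- provided the acyclicity hypothesis holds, namely that relative cell complexes built from the (transferred) generating trivial cofibrations are objectwise weak equivalences. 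First I would settle that hypothesis by filtering the relevant pushouts, exactly as one does for operads valued in a cofibrantly generated monoidal model category; for $\C=\sSets$ the required niceness (a monoid-type axiom, cofibrant unit, a suitable interval) is classical. Right properness is already visible here, since limits in $\multi_O(\C)$ are computed objectwise.

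The second stage glues the $\multi_O(\C)$ together into a model structure on all of $\multi(\C)$. That category is locally presentable, hence complete and cocomplete; the class of multifunctors satisfying both W1 and W2 is closed under retracts and two-out-of-three, since W1 is inherited from $\C$ and W2 from the folk model structure on $\cCat$; and cofibrations and trivial cofibrations are defined by lifting, so half of each lifting axiom is automatic. For the generating cofibrations I would take the fixed-object generating cofibrations, transported along the inclusions $\multi_O(\C)\hookrightarrow\multi(\C)$, together with the single map $\emptyset\to *$ that freely adjoins one object; for the generating trivial cofibrations, the fixed-object ones together with (a cofibrant model of) the inclusion $\{x\}\hookrightarrow\mathsf{E}$ of one object into the ``walking isomorphism'' multicategory $\mathsf{E}$ --- two objects, a mutually inverse pair of $1$-operations, no higher operations. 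The small object argument applied to these two sets supplies the two functorial factorizations; the generator $\emptyset\to *$ lets cofibrations control the object set, and $\{x\}\hookrightarrow\mathsf{E}$ is exactly what forces the essential-surjectivity clause W2 into the weak equivalences.

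The crux --- and the step I expect to be the main obstacle --- is to identify the trivial cofibrations, that is, to prove that a multifunctor is a cofibration and a weak equivalence (W1 and W2) if and only if it has the left lifting property against every fibration. The fixed-object half of this is handled by the first stage, but the interaction with the object-changing generator $\{x\}\hookrightarrow\mathsf{E}$ is delicate: one must check that a pushout of $\mathsf{E}$ along a map out of $\{x\}$ leaves all operation-objects weakly unchanged, so that such a pushout satisfies W1 as well as the evident W2, and then exhibit an arbitrary fully faithful, essentially surjective cofibration as a retract of a transfinite composite of such cells attached to fixed-object cells. This is the operadic analogue of the most intricate step in Bergner's proof, and it is where the $\Sigma_n$-equivariance of the operations must be carried through with care. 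Granting it, right properness of $\multi(\C)$ follows from the objectwise case and clause F2: pullback along a fibration is controlled on object sets by an isofibration of underlying categories, which reduces the statement to right properness of $\sSets$ and of the folk structure on $\cCat$.
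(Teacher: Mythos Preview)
The paper does not actually prove this theorem; it is quoted from the author's preprint~\cite{Me1} and no argument is given in the present paper beyond the statement and the remark on fibrant objects that follows. So there is no ``paper's own proof'' to compare against here.

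That said, your outline is the expected one and matches what the cited reference is understood to do: it is the Bergner strategy for simplicial categories, promoted to symmetric multicategories, with the fixed-object stage handled by the Berger--Moerdijk transfer machinery. One point to tighten: a single cofibrant model of $\{x\}\hookrightarrow\mathsf{E}$ is generally not enough for the generating trivial cofibrations. In Bergner's argument (and in its operadic analogues) one needs a \emph{set} of inclusions $\{x\}\hookrightarrow H$, where $H$ ranges over representatives of isomorphism classes of cofibrant ``interval'' objects on two objects whose underlying category is the walking isomorphism; otherwise the small object argument will not detect every trivial cofibration. You have correctly located the genuinely hard step --- showing that a pushout of such an $H$ into an arbitrary $\cP$ leaves all operation-spaces weakly unchanged --- and that is indeed where the symmetric-group bookkeeping and the filtration of free extensions must be carried out carefully.
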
 The fibrant objects in $\multi(\C)$ are those objects which are locally fibrant, i.e. $\cP(x_1,...,x_n;x)$ is a Kan complex for each $n\ge0$ and each sequence of objects $x_1,...,x_n;x$ in $\cP$. 

\begin{lemma}\label{fibrant objects}There exists a fibrant replacement functor on $\multi(\C)$ which fixes objects, i.e. $(\cP)_{f}\lra\cP$ is the identity on object sets.\end{lemma}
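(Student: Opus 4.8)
The plan is to build the fibrant replacement levelwise, using a small-object-type argument relative to the set of generating acyclic cofibrations, but carefully arranged so that no new objects are introduced. First I would recall that the fibrant objects are exactly the locally fibrant multicategories, i.e. those $\cP$ for which every $\cP(x_1,\dots,x_n;x)$ is a Kan complex (and, since $[\cP]_1$ is then automatically fibrant in the canonical model structure on $\cCat(\C)$, condition F2 is free once F1 holds). So the task reduces to finding a functorial way to replace each simplicial set of operations by a Kan complex while preserving the multicategory structure and the object set.

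The key observation is that the ordinary $\mathrm{Ex}^\infty$ functor on simplicial sets is a product-preserving (in fact finite-limit-preserving) functor equipped with a natural weak equivalence $\mathrm{id}\Rightarrow\mathrm{Ex}^\infty$, and $\mathrm{Ex}^\infty K$ is always a Kan complex. Because $\mathrm{Ex}^\infty$ preserves finite products, applying it to every space of operations $\cP(x_1,\dots,x_n;x)$ of a $\C$-multicategory $\cP$ produces a new $\C$-multicategory $(\cP)_f$ with the \emph{same object set}: the composition maps
$$\cP(c_1,\dots,c_n;c)\times\cP(d_1,\dots,d_k;c_i)\longrightarrow\cP(\dots)$$
are carried to composition maps for $(\cP)_f$ using the product-preservation, the unit maps $\eta_x:I\to\cP(x;x)$ are carried along (noting $\mathrm{Ex}^\infty$ fixes the point), and the $\Sigma_n$-equivariance is preserved since it is expressed by maps of the spaces of operations. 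The natural transformation $K\to\mathrm{Ex}^\infty K$ then assembles into a multifunctor $\cP\to(\cP)_f$ which is the identity on objects and is a levelwise weak equivalence, hence a weak equivalence in the sense of W1–W2 (W2 holds because $[-]_1$ commutes with $\mathrm{Ex}^\infty$ on hom-spaces and $\mathrm{Ex}^\infty$ induces an equivalence on $\pi_0$). Functoriality in $\cP$ is immediate from functoriality of $\mathrm{Ex}^\infty$.

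It remains to check that $\cP\to(\cP)_f$ is an acyclic cofibration, or at least that $(\cP)_f$ is fibrant — the latter is what the statement literally requires, and it follows because each $\mathrm{Ex}^\infty\cP(x_1,\dots,x_n;x)$ is a Kan complex, which is precisely F1, and F2 is then automatic as noted above. If one wants the map itself to be an acyclic cofibration (so that $(\cP)_f$ is a genuine fibrant replacement in the model-categorical sense), I would factor $\cP\to(\cP)_f$ using the cofibrantly generated structure of the theorem cited from~\cite{Me1}: apply the small object argument to the generating acyclic cofibrations to get $\cP\trcof \cP' \trfib (\cP)_f$, and observe that since $(\cP)_f$ is already fibrant the map $\cP\to\cP'$ is the desired acyclic cofibration into a fibrant object; the generating acyclic cofibrations for this model structure are built from those of $\C$ (together with the categorical ones) in a way that does not enlarge object sets when the target is locally fibrant, because the only object-changing generating cofibration — the one adjoining an object — is not acyclic.

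The main obstacle I anticipate is the bookkeeping needed to verify that the $\mathrm{Ex}^\infty$-construction genuinely assembles into a multicategory: one must confirm that the associativity and unit coherence diagrams, which hold in $\cP$ as equalities of maps out of iterated products of operation-spaces, remain equalities after applying the finite-product-preserving functor $\mathrm{Ex}^\infty$ — this is a formal consequence of $\mathrm{Ex}^\infty$ being a (lax, in fact strong) symmetric monoidal functor for the cartesian structure, but it should be stated carefully. A secondary point requiring care is the claim that W2 is satisfied: one needs that $\mathrm{Ex}^\infty$ induces a weak equivalence of simplicial categories $[\cP]_1\to[(\cP)_f]_1$, which holds because it is a levelwise weak equivalence on hom-spaces and is the identity on objects, hence is in particular a Dwyer–Kan equivalence, giving an equivalence on the underlying categories $\pi_0$. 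Everything else — functoriality, fixing objects, local fibrancy of the target — is then routine.
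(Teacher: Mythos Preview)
The paper states this lemma without proof, so there is no argument in the text to compare against; your $\mathrm{Ex}^\infty$ construction is the standard way to prove it and is correct. Applying $\mathrm{Ex}^\infty$ to each operation space does yield a simplicial multicategory with the same object set (because $\mathrm{Ex}^\infty$ preserves finite products and terminal objects), the natural map $\cP\to\mathrm{Ex}^\infty\cP$ is the identity on objects and a levelwise weak equivalence, and the target is locally fibrant. That already establishes the lemma as stated and as used later in the paper.

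Your supplementary paragraph about upgrading the map to an acyclic cofibration contains an error, however. You assert that ``the only object-changing generating cofibration --- the one adjoining an object --- is not acyclic,'' but in Bergner-style model structures (and in the model structure of~\cite{Me1}) there \emph{are} generating acyclic cofibrations that enlarge the object set: these are the maps that freely adjoin an object together with an equivalence to an existing one, precisely in order to enforce condition~F2 (the isofibration condition on $[F]_1$). So the small object argument with the full set of generating acyclic cofibrations will in general add objects. Fortunately this part of your argument is unnecessary: the lemma only asks for a functorial weak equivalence to a fibrant object fixing the object set, and your $\mathrm{Ex}^\infty$ construction already delivers that. If you do want an acyclic cofibration that fixes objects, the cleaner route is to run the small object argument only with those generating acyclic cofibrations coming from horn inclusions on operation spaces (the ones enforcing~F1), and then observe that once the target is locally fibrant the map to the terminal object automatically satisfies~F2 as well.
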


\subsection{Cofibrant Replacements}There exists several explicit cofibrant resolutions of operads in literature, and in this paper we will focus on two, the cotriple resolution and the $W$-construction of Boardman and Vogt.\

Let $\cP$, $\Q$ and $\R$ be simplicial multicategories, let $X$ be a $\cP$-$\Q$-bimodule, and let $Y$ be a $\R$-$\cP$-bimodule. We define the \emph{bar complex} $B(X,\cP,Y)$ to be the simplicial object in the category of $\R$-$\Q$-bimodules with $n^{th}$-degree $B_n(X,\cP,Y)=X\circ\cP^{\circ n}\circ Y$ with the obvious face and degeneracy maps. Applying the diagonal, we get an $\cR$-$\Q$-bimodule together with an augmentation map $\eta:diag (B(X,\cP, Y))\lra X\circ_{\cP}Y$.\

\begin{prop}The bar complex $diag(B(X,\cP,Y))$ is cofibrant in the category of $\cR$-$\Q$-bimodules and the augmentation map $\eta:diag (B(X,\cP, Y))\longrightarrow X\circ_{\cP}Y$ is a weak equivalence.\end{prop}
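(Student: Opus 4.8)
The plan is to treat the bar complex in two steps: first establish that $\operatorname{diag} B(X,\cP,Y)$ is cofibrant as an $\cR$-$\Q$-bimodule, and then show that the augmentation is a weak equivalence by exhibiting an extra degeneracy. For the cofibrancy claim, I would argue levelwise in the simplicial direction and then pass to the diagonal. The point is that for a simplicial object $Z_\bullet$ in $\cR$-$\Q$-bimodules whose latching maps $L_n Z \to Z_n$ are cofibrations of $\cR$-$\Q$-bimodules (i.e. $Z_\bullet$ is Reedy cofibrant in the appropriate semi-model or model structure on simplicial $\cR$-$\Q$-bimodules), the diagonal is cofibrant. So the heart of the argument is to check that $B_\bullet(X,\cP,Y)$ is Reedy cofibrant, which reduces to understanding the latching objects: since $B_n = X\circ\cP^{\circ n}\circ Y$, the degeneracy maps insert the unit $\unit \to \cP$, and the $n$-th latching object is $X\circ(\text{the } n\text{-th latching object of the simplicial bar construction on }\cP)\circ Y$. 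Using that $\unit \to \cP$ is (or may be assumed, after cofibrant replacement, to be) a cofibration and that $X\circ(-)\circ Y$ is built from the monoidal structure which is a left adjoint in each variable, the latching maps of $B_\bullet$ are composites of pushout-products of the cofibration $\unit\to\cP$ with itself, hence cofibrations. I would cite the standard bar-construction cofibrancy lemmas from the theory of monoids and modules in monoidal model categories (Schwede–Shipley style, as adapted to operads and bimodules in \cite{BM07,BM08}) rather than reprove them.

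For the second half — that $\eta:\operatorname{diag} B(X,\cP,Y)\to X\circ_\cP Y$ is a weak equivalence — the standard device is the extra degeneracy / simplicial homotopy. After restricting to the underlying simplicial sets (forgetting the bimodule structure), the simplicial object $B_\bullet(X,\cP,Y)$ augmented to $X\circ_\cP Y$ admits an extra degeneracy built from the unit of $\cP$ (and, on the $X\circ_\cP Y$ side, from a choice of section), which gives a simplicial contraction of the augmented object. A simplicial homotopy equivalence becomes, after realization (equivalently, after taking the diagonal, by the Eilenberg–Zilber/Dold–Kan comparison), an honest weak homotopy equivalence. Concretely: the map $\eta$ splits up to simplicial homotopy as a deformation retract of simplicial sets in each signature $x_1,\dots,x_n;x$, and since weak equivalences of $\cR$-$\Q$-bimodules are detected signature-wise (condition W1) together with the categorical condition W2 — which here is automatic because all these constructions fix object sets — we conclude $\eta$ is a weak equivalence of bimodules. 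I would be a little careful about whether one wants the extra degeneracy on the bar side only (yielding that $\operatorname{diag} B(X,\cP,Y)$ is contractible over $X\circ_\cP Y$ in a suitable fibered sense) or a genuine two-sided contraction; the cleanest route is to note $B_\bullet(X,\cP,Y) \simeq B_\bullet(X\circ_\cP\cP,\cP,Y)$ and use the unit map $\cP\circ_\cP Y \to Y$.

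The main obstacle I anticipate is the interaction between the composition product $\circ$ for bimodules and the model structure: $\circ$ is not symmetric and is only a left adjoint (hence preserves colimits and pushout-products) in a somewhat delicate way over operads with many objects, so one must be careful that ``$X\circ(-)\circ Y$ applied to a cofibration is a cofibration'' actually holds, which typically requires either cofibrancy hypotheses on $X$ and $Y$ or that one has already replaced $\cP$ (and possibly $X$, $Y$) by cofibrant objects — and the cotriple resolution/$W$-construction discussion that precedes this proposition is presumably exactly what supplies such cofibrant models. A secondary subtlety is commuting $\operatorname{diag}$ past the weak equivalence: one needs that the diagonal of a levelwise weak equivalence of simplicial objects in bimodules is a weak equivalence, which again follows signature-wise from the corresponding fact for bisimplicial sets, but it is worth stating explicitly since bimodule weak equivalences are defined signature-wise. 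Given these preparations the proof is short: Reedy cofibrancy $\Rightarrow$ cofibrancy of the diagonal; extra degeneracy $\Rightarrow$ the augmentation is a levelwise (hence diagonal) weak equivalence.
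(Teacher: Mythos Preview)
Your proposal is correct and is exactly the standard argument; the paper itself does not supply an independent proof but simply cites \cite{Rezk} and \cite{DH}, and the Reedy-cofibrancy-plus-extra-degeneracy strategy you outline is precisely what those references do. The caveats you flag about cofibrancy hypotheses on $X$, $Y$, $\cP$ and the asymmetry of $\circ$ are also the right ones to worry about, and are handled in the cited sources under the same running assumptions the paper adopts.
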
 

\begin{proof}See ~\cite{Rezk} and ~\cite{DH}.\end{proof}

The \emph{Hochschild resolution} of a simplicial multicategory $\cP$ is a simplicial object in the category of $\cP$-$\cP$-bimodules with $B_n(\cP,\cP,\cP):=\cP^{\circ (n+2)}$ where face maps come from the composition of $\cP$ and degeneracy maps come from the unit maps of $\cP$. To shorten notation we will denote $B_n(\cP,\cP,\cP)$ by $H_n\cP$.\

The diagonal of $H_*(\cP)$, denoted $diag(H_*(\cP))$, is a $\cP$-$\cP$-bimodule with $n$-simplicies the $n$-simplicies of $H_n(\cP)$. The composition operations of $\cP$ induce maps $\cP^{\circ n}\lra\cP$. Composition and identity maps are preserved by taking diagonals, so we have natural maps $\eta:diag(H_*\cP)\lra\cP$ and $\cP\circ\cP\lra H_n\cP$. The maps $\cP\circ\cP\lra H_n\cP$ come from the image of $H_0\cP$ under degeneracy maps. Taken together, all of the degeneracy maps induce a \emph{basepoint} $\cP\circ\cP\lra diag(H_*\cP)$. It follows from arguments similar to~\cite[17.2.2. 17.2.3]{FresseBook},\cite[5.2]{DH},\cite[5]{Rezk} that $diag(H_*\cP)$ is cofibrant as a pointed $\cP$-$\cP$-bimodule and that the augmentation map $\eta:diag(H_*\cP)\lra\cP$ is a weak equivalence of pointed $\cP$-$\cP$-bimodules. Note that being cofibrant as a pointed $\cP$-$\cP$-bimodule is equivalent to saying that $\cP\circ\cP\lra diag(H_*\cP)$ is a cofibration. \

We will also want to know how this resolution interacts with extension-restriction of scalars (see section~\ref{extensionandrestrictionofscalars}). Given a multifunctor $F:\cP\rightarrow\Q$, a $\cP$-$\Q$-bimodule $Y$ determined by $F$, and $X:=\cP$, the multicategory $\cP$ considered as a $\cP$-$\cP$-bimodule over itself, then the bar complex $B(X,\cP, Y)=X\circ\cP^{\circ n}\circ Y$ is a simplicial object in the category of $\cP$-$\Q$-bimodules together with the augmentation map\begin{equation*}\eta:\textrm{diag}(B(X,\cP,Y))\longrightarrow F_*(X).\end{equation*} 

\begin{corollary}\label{Barcomplexcylinder}The bar complex $\textrm{diag} B(X,\cP,Y)$ is cofibrant as a $\cP$-$\Q$-bimodule. Moreover, the augmentation map\begin{equation*}\eta:\textrm{diag}B(X,\cP,Y)\longrightarrow F_*(X)\end{equation*}is a weak equivalence in the category of $\cP$-$\Q$-bimodules.\end{corollary}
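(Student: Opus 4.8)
The plan is to deduce Corollary~\ref{Barcomplexcylinder} from the preceding proposition about the general bar complex $B(X,\cP,Y)$ together with the statement about $\mathrm{diag}(H_*\cP)$, by carefully tracking how extension of scalars along $F:\cP\to\Q$ interacts with the two-sided bar construction. The key observation is that in the situation described, with $X=\cP$ (viewed as a $\cP$-$\cP$-bimodule) and $Y$ the $\cP$-$\Q$-bimodule determined by $F$, we have $\cP^{\circ n}\circ Y \cong F_*(\cP^{\circ n}\circ \cP) = F_*(H_n\cP)$ in each simplicial degree, compatibly with faces and degeneracies, so that $\mathrm{diag}\,B(X,\cP,Y)\cong F_*(\mathrm{diag}(H_*\cP))$ as $\cP$-$\Q$-bimodules, and the augmentation $\eta$ is obtained by applying $F_*$ to the augmentation $\mathrm{diag}(H_*\cP)\to\cP$.

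First I would make the identification $B_n(X,\cP,Y)\cong F_*(H_n\cP)$ precise: since $Y$ is the restriction-of-scalars bimodule associated to $F$, composing on the right with $Y$ is the same as applying $F_*$ to the right, and $F_*$ is a left adjoint so commutes with the iterated composition products $\cP\circ(-)$; one then checks that the simplicial face and degeneracy maps of $B(X,\cP,Y)$ are exactly the images under $F_*$ of those of $H_*\cP$, which is immediate from the definitions since the faces are composition in $\cP$ and the degeneracies are insertion of units. Taking diagonals commutes with $F_*$ (both are computed degreewise / as colimits), giving the claimed isomorphism of $\cP$-$\Q$-bimodules, and under it $\eta$ corresponds to $F_*$ applied to the augmentation $\mathrm{diag}(H_*\cP)\to\cP$, whose target $F_*(\cP)=F_*(X)$ matches.

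Next, for cofibrancy: we know $\cP\circ\cP\to\mathrm{diag}(H_*\cP)$ is a cofibration of $\cP$-$\cP$-bimodules, equivalently $\mathrm{diag}(H_*\cP)$ is cofibrant as a pointed $\cP$-$\cP$-bimodule. Extension of scalars $F_*$ is the left adjoint of a Quillen pair between the bimodule categories (restriction $F^*$ preserves fibrations and trivial fibrations, as they are detected pointwise on the $\C$-objects and $F$ just relabels signatures), so $F_*$ preserves cofibrations and cofibrant objects. Applying $F_*$ to $\cP\circ\cP\to\mathrm{diag}(H_*\cP)$ and noting $F_*(\cP\circ\cP)=\cP\circ Y$ is (isomorphic to) the free, hence cofibrant, $\cP$-$\Q$-bimodule, we conclude $\mathrm{diag}\,B(X,\cP,Y)\cong F_*(\mathrm{diag}(H_*\cP))$ is cofibrant as a $\cP$-$\Q$-bimodule.

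For the weak equivalence statement, I would invoke the first Proposition of this subsection (the one proved via~\cite{Rezk},~\cite{DH}), which gives directly that $\mathrm{diag}\,B(X,\cP,Y)\to X\circ_\cP Y$ is a weak equivalence for any $X,Y$; here $X\circ_\cP Y=\cP\circ_\cP Y\cong Y\cong F_*(X)$, so $\eta$ is a weak equivalence. Alternatively, if one prefers to route through $F_*$, one must check that $F_*$ sends the specific weak equivalence $\mathrm{diag}(H_*\cP)\to\cP$ to a weak equivalence even though $F_*$ is only a left Quillen functor and $\mathrm{diag}(H_*\cP)$ is cofibrant — which is fine since left Quillen functors preserve weak equivalences between cofibrant objects, and $\cP$ itself is cofibrant as a (pointed) $\cP$-$\cP$-bimodule. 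The main obstacle I anticipate is purely bookkeeping: verifying that the right $\circ Y$ operation genuinely implements $F_*$ degreewise and that this identification is natural in the simplicial direction (i.e. strictly commutes with all faces and degeneracies, not just up to isomorphism in each degree), so that passing to diagonals is legitimate; once that compatibility is nailed down, everything else follows formally from the Quillen-adjunction properties of $(F_*,F^*)$ and the two results already cited.
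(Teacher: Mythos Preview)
Your proposal is correct, but considerably more elaborate than what the paper does. The paper gives no proof at all: the Corollary is recorded as an immediate specialization of the preceding general Proposition on $\mathrm{diag}\,B(X,\cP,Y)$, taking $\cR=\cP$, $X=\cP$, and observing that $X\circ_\cP Y=\cP\circ_\cP Y\cong Y=F_*(\cP)=F_*(X)$. Both the cofibrancy and the weak-equivalence claims are thus read off directly from that Proposition. Your argument for the weak equivalence (``invoke the first Proposition of this subsection'') is exactly this route. For cofibrancy, however, you detour through the identification $\mathrm{diag}\,B(X,\cP,Y)\cong F_*(\mathrm{diag}(H_*\cP))$ and the left-Quillen property of $F_*$; this is valid and has the merit of making explicit that the bar complex here is the extension of scalars of the Hochschild resolution (a fact the paper uses later, e.g.\ in Proposition~\ref{mortia3}), but it is unnecessary for the Corollary itself, since cofibrancy is already part of the conclusion of the general Proposition. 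The ``bookkeeping obstacle'' you anticipate is real but routine, and the paper sidesteps it entirely by not taking that route.
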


\subsection{The $W$-construction}The main idea behind the Boardman-Vogt $W$-construction is to enrich the free operad construction by assigning lengths to edges in trees. The composition$$\mathfrak{F}(\cP)\cof W(\cP)\we \cP$$is identified with the counit of the free-forgetful adjunction between pointed collections and operads (See, for example,~\cite[Theorem 4.2]{EM},~\cite[Section 3]{BM07}).  If the collection underlying $\cP$ is cofibrant and well-pointed, then the counit $\mathfrak{F}(\cP)\lra \cP$ can be factored into a cofibration $\mathfrak{F}(\cP)\cof W(\cP)$ followed by a trivial fibration $ W(\cP)\trfib \cP$. Since $\mathfrak{F}(\cP)$ is a cofibrant operad, the $W$-construction provides a \emph{cofibrant resolution} for $\cP$~\cite[5.1]{BM06}.\

We also have the notion of a \emph{relative} $W$-construction, which resolves a morphism between multicategories $u:\cP\lra\Q$. This relative version produces an object $W(\Q)_{\cP}$ which is characterized by the property that algebras over this operad satisfy the operations from $\Q$ up to coherent homotopy, while they satisfy the operations from $\cP$ on the nose.\

\begin{example}Stasheff's $A_{\infty}$-operad can be obtained as the relative Boardman-Vogt resolution $W(I_*\to \mathbb{A})$ where $I_*$ is the operad for pointed objects.\end{example}


While most things work in multicategories the same as they do for the more classical operads, we do have to keep track of objects when doing the $W$-construction. More explicitly, given a map $\alpha:D\lra C$ between sets of objects, we can consider the adjunction $\alpha_*:\multi_D\leftrightarrows\multi_C:\alpha^*$ between multicategories with $D$-objects and multicategories with $C$-objects. For $\Q\in\multi_{D}$ and $\cP\in\multi_{C}$, there exist natural maps$$\alpha_*W(\Q)\lra W(\alpha_*\Q)\text{ and } W(\alpha^*\cP)\lra\alpha^* W(\cP),$$but in general these maps are not isomorphisms. If $\alpha$ is injective, we know that there is an explicit description of $\alpha_*(Q)$, as$$\alpha_*(Q)(d_1,\dots,d_n;d)=\begin{cases}Q(c_1,\dots,c_n;c)&\text{if }d_i=\alpha(c_i),d=\alpha(c),\\I&\text{if }n=1,d=d_1\not\in Im(\alpha),\\0&\text{otherwise},\end{cases}$$and, using this description, it is easy to show that the map $\alpha_*W(Q)\lra W(\alpha_*Q)$ is an isomorphism.\

Consider a cofibration $u:\cP\lra\Q$ between operads. In~\cite[Appendix]{BM03} they construct what is called the free extension $\cP[u]$ of $\cP$ by $u$. This free extension is determined by the universal property that operad maps out of $\cP[u]$ are in one-to-one correspondence with maps of collections out of $\Q$, whose restriction to $\cP$ (along $u$) is an operad map. In particular, the identity on $\Q$ induces a factorization of $u$ into maps $\cP\ra \cP[u]\ra\Q$. These maps, in turn, factor into cofibrations $\cP\cof \cP[u]\cof W(\Q)_{\cP}$ followed by a weak equivalence $W(\Q)_{\cP}\we \Q$ in such a way that the operad $W(\cQ)_{\cP}$ is a quotient of the operad $W(\Q)$. \

While this factorization always exists~\cite[Theorem 4.1]{BM07}, if we consider a category of operads which is a \emph{left proper model category}, then we may define the required factorization simply by taking a pushout: $\cP\lra\cP\cup_{W(\cP)} W(\Q)\we \Q.$ What's more, the object $W(\Q)_{\cP}$ is constructed as a sequential colimit of trivial cofibrations of collections:$$W_0(\Q)_{\cP}\trcof W_1(\Q)_{\cP}\trcof W_2(\Q)_{\cP}\trcof \cdots$$For each $k$, $W_k(\Q)_{\cP}$ is a quotient of $W_k(\Q)$, which is the piece of the operad $W(\Q)$ restricted to operations with inputs $\le k$. In other words, $W(\Q)_{\cP}$ is a quotient of $W(\cQ)$ by a filtration-preserving map.\

\section{Algebra Structures}It is well known to the experts that the category of $\multi(\C)$ is a \emph{closed}, symmetric monoidal category with respect to the Boardman-Vogt tensor product. By closed, we mean that there exists an internal hom-object $\underline{\Hom}$ satisfying the adjunction relation$$\multi(\cP\otimes_{BV}\Q,\R)\cong\multi(\cP,\underline{\Hom}(\Q,\R)).$$ 

This internal hom-object $\underline{Hom}(\mathcal{P},\mathcal{Q})$ is a multicategory with objects the multifunctors $\mathcal{P}\longrightarrow\mathcal{Q}$, and whose operations are type of multi-natural transformation. 

\begin{definition}\label{Homs}~\cite[Definition 2.2]{EM} For notational convenience, we denote the sequence $c_1,\dots,c_k$ as $\{c_i\}_{i=1}^k$. Given symmetric multicategories $\cP$ and $\Q$, we define $\underline{\Hom}(\cP,\Q)$ to be a multicategory with objects the multifunctors from $\cP$ to $\Q$. Given a sequence of multifunctors $F_1,\ldots,F_k:\cP\rightarrow\Q$ of multifunctors and a target multifunctor $G:\cP\rightarrow\Q$, we define a \emph{$k$-natural transformation} from $F_1,\ldots,F_k$ to $G$ to be a function $\xi$ that assigns to each object $a$ of $\cP$ a $k$-operation $\xi_a:(F_1a,\ldots,F_ka)\to Ga$ of $\Q$, such that for any $m$-ary operation $\phi:(a_1,\ldots,a_m)\to b$ in $\cP$, the following diagram commutes:
$$\xymatrix{
\{\{F_ja_i\}_{j=1}^k\}_{i=1}^m\ar[r]^-{\{\xi_{a_i}\}}
\ar[d]_-{\cong}
&\{Ga_i\}_{i=1}^m\ar[dd]^-{G\phi}
\\ \{\{F_ja_i\}_{i=1}^m\}_{j=1}^k \ar[d]_-{\{F_j\phi\}}
\\ \{F_jb\}_{j=1}^k \ar[r]_-{\xi_b}
&Gb.}$$The unlabelled isomorphism is the standard block permutation that shuffles $m$ blocks of $k$ entries each into $k$ blocks of $m$ entries each. The $k$-natural transformations form the $k$-ary operations in the multicategory $\underline{\Hom}(\cP,\Q)$. Composition and symmetric actions are induced by the composition and symmetric actions in $\Q$.\end{definition} 

In particular, if we restrict to the linear operations, the object $[\underline{\Hom}]_{1}$, gives $\multi$ an enrichment over the category of small categories. For any two multicategories $\cP$ and $\Q$, there exists a tensor product multicategory $\cP\otimes_{BV}\Q$ and a \emph{universal} bilinear map $(\cP,\Q)\to \cP\otimes_{BV}\Q$. This tensor product makes $\multi$ into a symmetric monoidal category. We will only give a brief description of the construction now, but refer the reader to the highly readable version in~\cite{EM}. 

\begin{definition}~\cite{EM}\label{bilin} Let $\cP$, $\Q$, and $\R$ be multicategories. A \emph{bilinear map}$$f:(\cP,\Q)\to \R$$ consists of the following data:\begin{enumerate}
\item A function $f:\obj (\cP)\times\obj(\Q)\to\obj(\R)$,
\item For each $m$-ary operation $\phi\in\cP(a_1,\ldots,a_m; a)$ of $\cP$ and each object $b$ of $\Q$, an $m$-ary operation $f(\phi,b)\in\R(f(a_1,b),\ldots,f(a_m,b);f(a,b))$ of $\R$,
\item For each $n$-ary operation $\psi\in\Q(b_1,\ldots,b_n;b)$ of $\Q$ and object $a$ of $\cP$, an $n$-operation $f(a,\psi)\in\R(f(a,b_1),\ldots,f(a,b_n);f(a,b))$ of $\R$\end{enumerate}such that:
\begin{enumerate}
\item For each object $a$ of $\cP$, $f(a,-)$ is a multifunctor from $\Q$ to $\R$,
\item For each object $b$ of $\Q$, $f(-,b)$ is a multifunctor from $\cP$ to $\R$,
\item Given an $m$-operation $\phi\in\cP(a_1,\ldots,a_m;a)$ and an $n$-operation $\psi\in\Q(b_1,\ldots,b_n;b)$ in $\Q$, the following diagram commutes:
$$\xymatrix{
\{\{f(a_i,b_j)\}_{i=1}^m\}_{j=1}^n \ar[r]^-{\{f(\phi,b_j)\}}
\ar[d]_-{\cong}
&\{f(a,b_j)\}_{j=1}^n\ar[dd]^-{f(a,\psi)}
\\ \{\{f(a_i,b_j)\}_{j=1}^n\}_{i=1}^m \ar[d]_-{\{f(a_i,\psi)\}}
\\ \{f(a_i,b)\}_{i=1}^m\ar[r]_-{f(\phi,b)}&f(a,b).}$$\end{enumerate}

The set of bilinear maps is denoted as $\textrm{Bilin}(\cP,\Q;\R)$.\end{definition}

A multifunctor $\cP\times \Q\to \R$ assigns a $k$-operations in $\R$ to each pair of $k$-operations from $\cP$ and $\Q$. On the other hand, a bilinear map assigns an $m\times n$-operation in $\R$, to each pair $(\phi,\psi)$, where $\phi$ is and $m$-operation $\cP$ and $\psi$ is an $n$-operation $\Q$. When restricted to the linear operations, a bilinear map $f:(\cP,\Q)\to \R$ is precisely a functor $[f]_1:[\cP]_1\times[\Q]_1\to [\R]_1$ of the underlying categories. Objects of $\textrm{Bilin}(M,N;P)$ are the objects of a multicategory naturally isomorphic to both $\underline{\Hom}(\cP,\underline{\Hom}(\Q,\R))$ and $\underline{\Hom}(\Q,\underline{\Hom}(\cP,\R))$.

\subsection{Construction of $\otimes_{BV}$}Let $\cP$ and $\Q$ be fixed multicategories, and construct the coproducts of multicategories$$\coprod_{a\in\obj(\cP)}\Q\quad\hbox{and}\quad\coprod_{b\in\obj(\Q)}\cP.$$ The coproduct is a universal morphism, i.e. given a multicategory $\R$, then $\coprod_{a\in\obj(\cP)}\Q$ is the universal source for any multifunctor which maps the objects $\obj(\cP)\times\obj(\Q)$ to $\obj(\R)$ and is a multifunctor with respect to $\Q$. Similarly, $\coprod_{b\in\obj(\Q)}\cP$ is universal for maps that send objects of $\obj(\cP)\times\obj(\Q)$ to $\obj(\R)$ which are multifunctors in $\cP$. If we are given a bilinear map $f:(\cP,\Q)\to\R$, it follows that we have multifunctors from both $\coprod_{a\in\obj(\cP)}\Q$ and $\coprod_{b\in\obj(\Q)}\cP$ to $\R$. Therefore the bilinear map $f$ induces a map from the pushout
$$\xymatrix{
\mathfrak{F}(\obj(\cP)\times\obj(\Q))\ar[r]\ar[d]
&\displaystyle\coprod_{b\in\obj(\Q)}\cP\ar[d]
\\ \displaystyle\coprod_{a\in\obj(\cP)}\Q\ar[r]
&\cP + \Q,}$$ to the multifunctor $\R$. The object in the upper lefthand corner is the free symmetric multicategory on the set of objects $\obj(\cP)\times\obj(\Q)$. It follows that the pushout $\cP +\Q$ is universal with respect to maps that are multifunctors in each variable separately.\

The $BV$-tensor product is the quotient $\cP + \Q$ after we force the bilinearity relations to commute. We construct this quotient as follows. For each $m$-ary operation $\phi\in\cP(a_1,\ldots,a_m;a)$ in $\cP$ and each $n$-ary operation $\psi\in\Q(b_1,\ldots,b_n; b)$ in $\Q$, define two non-symmetric collections $X(\phi,\psi)$ and $Y(\phi,\psi)$. The object sets of both $X$ and $Y$ will be the set $$(\{a_1,\ldots,a_m\}\times\{b_1,\ldots,b_n\})\cup\{(a,b)\}.$$ We give $X(\phi,\psi)$  precisely two operations, both with source $\{\{(a_i,b_j)\}_{i=1}^m\}_{j=1}^n$ and target $(a,b)$. We give $Y(\phi,\psi)$ exactly one operation with source $\{\{(a_i,b_j)\}_{i=1}^m\}_{j=1}^n$ and target $(a,b)$. We then define a map of collections $X(\phi,\psi)\lra Y(\phi,\psi)$ which sends the two operations of $X$ to the unique operation of $Y$.  We define a second map of collections from $X(\phi,\psi)$ to the underlying collection of $\cP +\Q$, denoted $U(\cP + \Q)$, by sending each operation of $X$ one way around the diagram $$\xymatrix{
\{\{(a_i,b_j)\}_{i=1}^m\}_{j=1}^n \ar[r]^-{\{(\phi,b_j)\}}
\ar[d]_-{\cong}
&\{(a,b_j)\}_{j=1}^n\ar[dd]^-{(a,\psi)}
\\ \{\{(a_i,b_j)\}_{j=1}^n\}_{i=1}^m \ar[d]_-{\{(a_i,\psi)\}}
\\ \{(a_i,b)\}_{i=1}^m\ar[r]_-{(\phi,b)}&f(a,b).}$$

Then apply the free, non-symmetric multicategory functor to these collections and form the following pushout:
$$\xymatrix{
\displaystyle\coprod_{(\phi,\psi)}FX(\phi,\psi)\ar@<1ex>[r]\ar[d]
&\mathop{FU(\cP + \Q)}\limits_{\phantom\phi}\ar[d]
\\ \displaystyle\coprod_{(\phi,\psi)}FY(\phi,\psi)\ar@<1ex>[r]
&\mathop{\cP \otimes_{BV} \Q}\limits_{\phantom\phi}}$$where $F(-)$ denotes the free non-symmetric multicategory functor(see appendix). This quotient is precisely what it means to force the diagrams in the definition of a bilinear map to commute, so $\cP\otimes_{BV}\Q$ is a universal bilinear target. We then go back and add the symmetric actions in a symstematic way. \

The \emph{unit} of the $BV$-tensor product is the multicategory $\mathcal{I}$ with one object and only the identity morphism on that object (see Example~\ref{unit}). \

\begin{remark}The symmetric actions are critical to the definition of the Boardman-Vogt tensor product. It is possible define a version of the tensor product on planar (a.k.a. non-symmetric) multicategories which forgets the bilinear relations. This is just $\cP +\Q$, which we called the \emph{coproduct of operads}. This does still form a \emph{closed} monoidal structure, but the internal hom-objects for this structure are not as well behaved as those presented above. In particular, one cannot define multilinear transformations between planar operads. We can still define transformations where the domain consists of a single multifunctor.\end{remark}

For a general symmetric monoidal category $\mathcal{C}$, the construction of the Boardman-Vogt tensor product still makes sense if either $\C$ is Cartesian closed or we restrict to \emph{Hopf} operads $\mathcal{P}$ and $\mathcal{Q}$. Hopf operads are characterized by the property that their algebra categories $\Alg_{\mathcal{P}}(\mathcal{C})$ and $\Alg_{\mathcal{Q}}(\mathcal{C})$ are again symmetric monoidal categories. In this case, the $BV$-tensor product tells us that a $(\mathcal{P}\otimes_{BV}\mathcal{Q})$-algebra in $\mathcal{C}$ is the same thing as a $\mathcal{P}$-algebra in $\Alg_{\mathcal{Q}}(\mathcal{C})$, and is also the same thing as a $\mathcal{Q}$-algebra in $\Alg_{\mathcal{P}}(\mathcal{C})$.\

\subsection{Derived Tensor Products}The Bordman-Vogt tensor product can be derived in the usual way, i.e. $$\cP\otimes^{\mathbb{L}}_{BV}\Q:= (\cP)_{c}\otimes_{BV}\Q.$$ Unfortunately, $\multi(\C)$ is not a monoidal model category, and $\otimes_{BV}$ does not preserve weak equivalences in general. It is true, however, that given a weak equivalence $\cP\lra\cP'$ which \emph{fixes} objects that $\cP +\Q\lra \cP' +\Q$ is a weak equivalence (see ~\cite{DH}, ~\cite{FV}). \

\subsection{Adjunction Relations}It can be helpful to think about these various monoidal structures in terms of generators and relations. Let $S$ be a fix set of operations of a multicategory $\cP$.  The multicategory $<S>$ \emph{generated by} $S$ is the smallest sub-multicategory of $\cP$ that contains all the operations in $S$. If $<S>=\cP$, we say that $\cP$ is \emph{generated by} $S$. Let $\cP$ and $\Q$ be multicategories, $\phi$ an operation of $\cP$ and $b$ an object of $\Q$.  Then we write $\phi\otimes_{bv} b$ for the operation of $\cP\otimes_{BV}\Q$ induced from $\phi$ and $b$ by the universal bilinear map $(\cP,\Q)\to \cP\otimes_{BV}\Q$.  Similarly, given an object $a$ of $\cP$ and an operation $\psi$ of $\Q$ we write $a\otimes_{bv}\psi$ for the operation of $\cP\otimes_{BV}\Q$ induced by $a$ and $\psi$. The universal property of the tensor product implies the following proposition.\

\begin{prop}\cite{EM, FZ} The operations $a\otimes_{bv}\psi$ and $\phi\otimes_{bv}b$ generate the multicategory $\cP\otimes_{BV}\Q$.\end{prop}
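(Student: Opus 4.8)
The plan is to exploit the universal property of $\cP \otimes_{BV}\Q$ directly, showing that the sub-multicategory generated by the elements $a \otimes_{bv}\psi$ and $\phi\otimes_{bv} b$ already receives a bilinear map from $(\cP,\Q)$ with the correct universal behaviour, and is therefore all of $\cP\otimes_{BV}\Q$. First I would set $\mathcal{R} := \langle S\rangle \subseteq \cP\otimes_{BV}\Q$, where $S = \{a \otimes_{bv}\psi\} \cup \{\phi\otimes_{bv} b\}$ ranges over all objects $a$ of $\cP$, all objects $b$ of $\Q$, all operations $\psi$ of $\Q$, and all operations $\phi$ of $\cP$. Since the universal bilinear map $g\colon (\cP,\Q)\to \cP\otimes_{BV}\Q$ sends $(\phi,b)$ to $\phi\otimes_{bv} b$ and $(a,\psi)$ to $a\otimes_{bv}\psi$ by definition of these symbols, and since the defining data of a bilinear map (Definition~\ref{bilin}) consists precisely of the images of pairs of the form $(\phi,b)$ and $(a,\psi)$ together with the requirement that $g(a,-)$ and $g(-,b)$ be multifunctors, the map $g$ factors through $\mathcal{R}$: every operation in the image of the two structure maps of $g$ lies in $S\subseteq \mathcal{R}$ by construction, and the values of the multifunctors $g(a,-)$, $g(-,b)$ on composites and symmetric rearrangements stay inside $\mathcal{R}$ because $\mathcal{R}$ is a sub-multicategory (closed under composition and $\Sigma_n$-actions). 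Hence we obtain a bilinear map $g'\colon (\cP,\Q)\to \mathcal{R}$ with $\iota \circ g' = g$, where $\iota\colon \mathcal{R}\hookrightarrow \cP\otimes_{BV}\Q$ is the inclusion.

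Next I would invoke the universal property of $g$ itself: since $g'$ is a bilinear map, there is a unique multifunctor $h\colon \cP\otimes_{BV}\Q\to \mathcal{R}$ with $h\circ g = g'$. Composing, $\iota\circ h \circ g = \iota\circ g' = g = \id\circ g$, and by the uniqueness clause in the universal property (a bilinear map into $\cP\otimes_{BV}\Q$ factors through $g$ by a \emph{unique} multifunctor) we get $\iota\circ h = \id_{\cP\otimes_{BV}\Q}$. Because $\iota$ is a (faithful, injective-on-objects) inclusion that is a split epimorphism after composing with $h$, it must be an isomorphism, and in particular surjective on objects and on all operation sets; that is, $\mathcal{R} = \cP\otimes_{BV}\Q$. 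This is exactly the statement that $S$ generates $\cP\otimes_{BV}\Q$.

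The one point requiring care — and the step I expect to be the main obstacle — is verifying that the elements $a\otimes_{bv}\psi$ and $\phi\otimes_{bv} b$ really do exhaust the structure data of \emph{every} bilinear map, i.e. that the explicit presentation of $\cP\otimes_{BV}\Q$ as the pushout $FU(\cP+\Q) \cup_{\coprod FX(\phi,\psi)} \coprod FY(\phi,\psi)$ forces nothing beyond the images of these generators. Concretely, $\cP+\Q$ is built from the coproduct diagram whose two legs are indexed by $\obj(\cP)$-many copies of $\Q$ and $\obj(\Q)$-many copies of $\cP$; the image of the copy of $\Q$ indexed by $a$ is generated, as a sub-multicategory, by $\{a\otimes_{bv}\psi : \psi \in \Q\}$ (the multifunctor $g(a,-)$ is determined on all of $\Q$ by its values on a generating set of operations of $\Q$, but since we take \emph{all} operations $\psi$ these images literally are all of $g(a,\Q)$), and symmetrically for the copies of $\cP$. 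Passing to the quotient only identifies operations, never introduces new ones, so the generators survive. I would spell this out by tracking through the pushout square: $U(\cP+\Q)$ is generated over the free part by the images of the coproduct inclusions, these are covered by $S$, and the quotient map $FU(\cP+\Q)\to \cP\otimes_{BV}\Q$ is surjective on operations — so $S$ generates the target. The remaining bookkeeping (symmetric actions, that $\langle S\rangle$ is closed under $\circ_i$) is routine and follows from $\langle S\rangle$ being by definition a sub-multicategory.
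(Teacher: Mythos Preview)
The paper does not supply its own proof of this proposition; it is quoted with attribution to \cite{EM} and \cite{FZ} and then used as input for the adjunction that follows. So there is no in-paper argument to compare against directly.

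Your universal-property argument in the first two paragraphs is correct and self-contained: once you observe that every piece of data in a bilinear map (the images $g(\phi,b)$ and $g(a,\psi)$) already lands in $\mathcal{R}=\langle S\rangle$, and that the interchange square commutes in $\mathcal{R}$ simply because it commutes in the ambient $\cP\otimes_{BV}\Q$, the factorization $g'$ exists and the retraction $\iota\circ h=\id$ forces $\iota$ to be an isomorphism. This is the cleanest route and does not require unpacking the explicit pushout presentation at all.

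Your third paragraph, tracing through the construction of $\cP+\Q$ and the quotient, is closer in spirit to how Elmendorf--Mandell argue, since they build $\otimes_{BV}$ by hand and read generators off the presentation. But you do not need both arguments: the universal-property proof already establishes the result, and the ``point requiring care'' you flag is not actually an obstacle---it is subsumed by the observation that the data of a bilinear map is \emph{by definition} exhausted by the values $g(\phi,b)$ and $g(a,\psi)$. If you keep the third paragraph, you might streamline it to the single remark that the quotient map $\cP+\Q\to\cP\otimes_{BV}\Q$ is surjective on operations and $\cP+\Q$ is visibly generated by the coproduct inclusions.
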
Using this characterization, one can prove that we obtain the following adjunction$$\multi(\cP\otimes_{BV}\Q,\R)\cong\multi(\cP,\underline{\Hom}(\Q,\R))$$which enriches to a natural isomorphism of multicategories$$\underline{\Hom}(\cP\otimes_{BV} \Q,\R)\cong\underline{\Hom}(\cP,\underline{\Hom}(\Q,\R)).$$\

\begin{prop}\cite{EM}\label{tensorGen} The $k$-operations of $\underline{\Hom}(\cP\otimes_{BV}\Q,\R)$ are precisely those functions as in Lemma~\ref{natural} which are natural with respect to all morphisms of the form $a\otimes_{bv}\psi$ or $\phi\otimes_{bv} b$.\end{prop}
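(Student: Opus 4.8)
The claim to prove is Proposition~\ref{tensorGen}: the $k$-operations of $\underline{\Hom}(\cP\otimes_{BV}\Q,\R)$ are exactly the functions (in the sense of the $k$-natural transformation data of Definition~\ref{Homs}) that are natural with respect to all morphisms of the form $a\otimes_{bv}\psi$ and $\phi\otimes_{bv} b$.

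The plan is to combine two previously stated facts: the adjunction isomorphism of multicategories $\underline{\Hom}(\cP\otimes_{BV}\Q,\R)\cong\underline{\Hom}(\cP,\underline{\Hom}(\Q,\R))$, and the generation statement that $\cP\otimes_{BV}\Q$ is generated by the operations $a\otimes_{bv}\psi$ and $\phi\otimes_{bv}b$. First I would unwind the definition: by Definition~\ref{Homs}, a $k$-operation of $\underline{\Hom}(\cP\otimes_{BV}\Q,\R)$ from $(F_1,\dots,F_k)$ to $G$ is a function $\xi$ assigning to each object of $\cP\otimes_{BV}\Q$ (equivalently, each pair $(a,b)\in\obj(\cP)\times\obj(\Q)$) a $k$-operation $\xi_{(a,b)}$ of $\R$, subject to the naturality square for \emph{every} operation of $\cP\otimes_{BV}\Q$. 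So a priori this is a large condition; the content of the proposition is that it suffices to check it on generators.

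The key step is a ``closure under composition and symmetric action'' argument: if the naturality square of Definition~\ref{Homs} holds for operations $\phi_1$ and $\phi_2$ of $\cP\otimes_{BV}\Q$ (composable appropriately), then it holds for $\phi_1\circ_i\phi_2$; and similarly it is stable under the $\Sigma_n$-actions, and holds trivially for identities. This is a diagram-pasting verification: stack two naturality squares and use functoriality of each $F_j$ and of $G$ together with the block-permutation coherence. Granting this, the set of operations of $\cP\otimes_{BV}\Q$ for which the square commutes is a sub-multicategory; since it contains the generators $a\otimes_{bv}\psi$ and $\phi\otimes_{bv}b$, and these generate all of $\cP\otimes_{BV}\Q$, the square commutes for every operation. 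Conversely, a $k$-operation of $\underline{\Hom}(\cP\otimes_{BV}\Q,\R)$ certainly satisfies naturality on the generators, so the two descriptions coincide. I would present this either directly, as just sketched, or equivalently by transporting along the adjunction isomorphism: under $\underline{\Hom}(\cP\otimes_{BV}\Q,\R)\cong\underline{\Hom}(\cP,\underline{\Hom}(\Q,\R))$, naturality with respect to $\phi\otimes_{bv}b$ corresponds to the outer ($\cP$-)naturality and naturality with respect to $a\otimes_{bv}\psi$ to the inner ($\Q$-)naturality, and the ``mixed'' squares are forced by the bilinearity relations imposed in the construction of $\otimes_{BV}$.

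The main obstacle I expect is the bookkeeping in the closure-under-composition step: verifying that pasting two instances of the (already somewhat involved) hexagonal diagram in Definition~\ref{Homs}, with all the block-permutation isomorphisms shuffling $m$ blocks of $k$ into $k$ blocks of $m$, actually yields the naturality square for the composite operation. One has to be careful that the various coherence isomorphisms compose correctly and that the $\Sigma$-equivariance of everything in sight is used in the right places; this is routine but error-prone, and is the only place real work happens. Everything else is formal once Proposition~\ref{bilin}-style generation and the enriched adjunction are in hand.
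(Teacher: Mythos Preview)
Your proposal is correct and follows essentially the same route as the paper. The paper does not give a separate proof of Proposition~\ref{tensorGen} but instead isolates your ``closure under composition and symmetric action'' step as Lemma~\ref{natural} (proved immediately afterwards by exactly the diagram-pasting you describe), and then Proposition~\ref{tensorGen} is the special case obtained by taking the generating set $S=\{a\otimes_{bv}\psi,\ \phi\otimes_{bv}b\}$ and invoking the generation statement for $\cP\otimes_{BV}\Q$; your alternative phrasing via the adjunction isomorphism is not used in the paper but is a legitimate repackaging of the same content.
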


\begin{lemma}\cite{EM}\label{natural} Fix two multicategories $\cP$ and $\Q$, and suppose that $<S>$ is a generating set of operations for $\cP$. Then given a sequence of multifunctors $F_1,\dots,F_k,G:\cP\ra\Q$ and a map $\xi$ which assigns to each object $a$ of $\cP$ a $k$-operation $\xi_a:(F_1a,\dots,F_ka)\to Ga$ of $\Q$ such that the diagram
$$\xymatrix{
\{\{F_ja_i\}_{j=1}^k\}_{i=1}^m\ar[r]^-{\{\xi_{a_i}\}}
\ar[d]_-{\cong}
&\{Ga_i\}_{i=1}^m\ar[dd]^-{G\phi}
\\ \{\{F_ja_i\}_{i=1}^m\}_{j=1}^k \ar[d]_-{\{F_j\phi\}}
\\ \{F_jb\}_{j=1}^k \ar[r]_-{\xi_b}
&Gb}$$commutes for all $\phi$ in $<S>$. Then the diagram commutes for all operations of $\cP$, so $\xi$ is a $k$-natural transformation.\end{lemma}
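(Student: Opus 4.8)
The plan is to verify Lemma~\ref{natural} by induction on the structure of an arbitrary operation $\phi$ of $\cP$, using the fact that $<S> = \cP$ means every operation of $\cP$ is built from operations in $S$ by finitely many applications of the structure maps---namely, identity (unit) operations, composition $\circ_i$ (equivalently the multi-composition $p(q_1,\dots,q_n)$), and the symmetric action $\sigma^*$. So I would set up the statement to prove as: for every operation $\phi$ of $\cP$ the hexagonal naturality square in the lemma commutes; this is assumed for $\phi \in S$, and I must show the class of $\phi$ for which it holds is closed under the three generating operations and contains the units.

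First I would handle the base cases. For a unit operation $1_a : (a) \to a$, the square degenerates: $G(1_a) = 1_{Ga}$, each $F_j(1_a) = 1_{F_j a}$, the block permutation on one block is the identity, and both composites reduce to $\xi_a$ itself, so commutativity is automatic. (If $S$ is not assumed to contain the units one adds them here; if it is, this is subsumed.) Next, closure under the symmetric action: if $\phi : (a_1,\dots,a_m) \to b$ satisfies the square, I would show $\phi\sigma^* = \sigma^*(\phi)$ does too. The square for $\sigma^*(\phi)$ is obtained from that for $\phi$ by permuting the $m$ input-blocks by $\sigma$; since the $F_j$ and $G$ are multifunctors they intertwine the $\Sigma_m$-actions, and the block-permutation isomorphism on the left edge is $\Sigma_m$-equivariant in the appropriate bookkeeping sense, so the permuted square commutes iff the original does. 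This step is essentially a diagram chase tracking how the standard block shuffle interacts with a permutation of blocks, which is routine but notationally heavy.

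The heart of the argument is closure under composition. Suppose $\phi = \psi(\chi_1,\dots,\chi_n)$ where $\psi \in \cP(c_1,\dots,c_n; b)$ and $\chi_i \in \cP(a^i_1,\dots,a^i_{k_i}; c_i)$ all satisfy their respective naturality squares; I want the square for $\phi$. Here I would paste the squares: apply the $\chi_i$-squares at the ``top'' to handle the inputs one composition-layer down, then apply the $\psi$-square, and use functoriality of $G$ and each $F_j$ with respect to composition ($G\phi = G\psi(G\chi_1,\dots,G\chi_n)$ and likewise for $F_j$) together with the compatibility of the $\xi_{a^i_\ell}$ and $\xi_{c_i}$ with the composition in $\Q$. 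The block-permutation isomorphisms compose coherently (the shuffle of an $(m)$-fold composite of blocks factors through the shuffles of its pieces), which is exactly the coherence one needs to glue the sub-squares into the big square. Concretely this is a two- or three-stage diagram chase stacking the commuting squares for $\chi_1,\dots,\chi_n$ above the commuting square for $\psi$.

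The main obstacle will be the bookkeeping around the block-permutation isomorphisms: making precise that the ``standard block permutation'' shuffling $m$ blocks of $k$ into $k$ blocks of $m$ is natural/coherent enough that it is compatible with (i) permuting blocks (for the $\sigma^*$ case) and (ii) nesting of composites (for the composition case). Once one fixes careful index notation---say writing the shuffle as an explicit element of the relevant symmetric group and checking the cocycle-type identity it satisfies under composition---each individual verification is a mechanical chase, but stating it cleanly is where the real work lies. Everything else follows from $F_1,\dots,F_k,G$ being multifunctors and from $\xi$ being compatible with composition and symmetry in $\Q$ on the generators, which is given. Since $\cP = <S>$, the induction exhausts all operations, proving $\xi$ is a genuine $k$-natural transformation.
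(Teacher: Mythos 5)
Your proposal is correct and follows essentially the same route as the paper's proof: the paper presents precisely the two diagram chases you describe (closure of the naturality square under composition $\psi\circ(\phi_1,\dots,\phi_n)$ of generating operations, and closure under the symmetric action $\psi\cdot\sigma$), then concludes since $<S>=\cP$. Your account frames this more explicitly as a structural induction and also records the trivial unit case, but these are refinements in presentation rather than a different argument.
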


\begin{proof}This is proved in~\cite{EM}, but we include the proof here because it is useful. First, we assume that we are given elements $\phi_1,\dots,\phi_n$ in $<S>$ with $\phi_i\in \cP(a_{i1},\dots,a_{im_i}; b_i)$ and $\psi\ in\cP(b_1,\dots,b_n;c)$.  Then the following diagram shows that our given transformation $\xi$ is natural with respect to the composition $\psi\circ(\phi_1,\dots,\phi_n)$ in $\cP$:
$$\xymatrix@C+25pt{
\{\{\{F_j(a_{is})\}_{s=1}^{m_i}\}_{i=1}^n\}_{j=1}^k
\ar[r]^-{\{\{F_j\phi_i\}_{i=1}^n\}_{j=1}^k} \ar[d]_-{\cong}
&\{\{F_jb_i\}_{i=1}^n\}_{j=1}^k \ar[r]^-{\{F_j\psi\}_{j=1}^k}
\ar[d]_-{\cong}
&\{F_jc\}_{j=1}^k \ar[ddd]^-{\xi_c}
\\ \{\{\{F_j(a_{is})\}_{s=1}^{m_i}\}_{j=1}^k\}_{i=1}^n
\ar[r]^-{\{\{F_j\phi_i\}_{j=1}^k\}_{i=1}^n} \ar[d]_-{\cong}
&\{\{F_jb_i\}_{j=1}^k\}_{i=1}^n
\ar[dd]^-{\{\xi_{b_i}\}_{i=1}^n}
\\ \{\{\{F_j(a_{is})\}_{j=1}^k\}_{s=1}^{m_i}\}_{i=1}^n
\ar[d]^-{\{\{\xi_{a_{is}}\}_{s=1}^{m_i}\}_{i=1}^n}
\\ \{\{G(a_{is})\}_{s=1}^{m_i}\}_{i=1}^n
\ar[r]^-{\{G\phi_i\}_{i=1}^n}
&\{Gb_i\}_{i=1}^n \ar[r]^-{G\psi} &Gc.}$$

Now, for every $\sigma\in\Sigma_n$, the following diagram shows that $\xi$ is natural with respect to the symmetric actions $\psi\cdot\sigma$:
$$\xymatrix@C+25pt{
\{\{F_jb_{\sigma(i)}\}_{i=1}^n\}_{j=1}^k \ar[r]^-{\cong}
\ar[d]^-{\{\sigma\}}&\{\{F_jb_{\sigma(i)}\}_{j=1}^k\}_{i=1}^n
\ar[r]^-{\{\xi_{b_{\sigma(i)}}\}_{i=1}^n} \ar[d]^-{\{\sigma\}}
&\{Gb_{\sigma(i)}\}_{i=1}^n \ar[d]^-{\{\sigma\}}
\\ \{\{F_jb_i\}_{i=1}^n\}_{j=1}^k \ar[r]^-{\cong}
\ar[d]^-{\{F_j\psi\}_{j=1}^k}
&\{\{F_jb_i\}_{j=1}^k\}_{i=1}^n \ar[r]^-{\{\xi_{b_i}\}_{i=1}^n}
&\{Gb_i\}_{i=1}^n \ar[d]^-{G\psi}
\\ \{f_jc\}_{j=1}^k \ar[rr]^-{\xi_c}&&Gc.}$$Since we know that $\xi$ is natural with respect to the generating operations, it now follows that $\xi$ is natural with respect to all morphisms in $\cP$. Therefore $\xi$ is a $k$-natural transformation.\end{proof}

The bijection on objects$$\multi(\Q,\underline{\Hom}(\cP,\R))\leftrightarrow\multi(\cP\otimes_{BV}\Q,\R)\leftrightarrow\multi(\cP,\underline{\Hom}(\Q,\R))$$ can be extended to functors between simplicial multicategories$$\underline{\Hom}(\Q,\underline{\Hom}(\cP,\R))\leftrightarrows\underline{\Hom}(\cP\otimes_{BV}\Q,\R)\leftrightarrows\underline{\Hom}(\cP,\underline{\Hom}(\Q,\R)).$$ In particular, before we take the quotient forcing the bilinear relations, we have a functor of simplicial categories$$U:[\underline{\Hom}]_{1}(\cP + \Q,\R)\longrightarrow[\underline{Hom}]_{1}(\cP,[\underline{\Hom}]_{1}(\Q,\R)).$$  

\begin{lemma}\label{enveloping}There exists a left adjoint to $U$ which we will denote by $E$.\end{lemma}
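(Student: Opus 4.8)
The plan is to build $E$ directly as a quotient (a coequalizer/pushout of multicategories) that forces exactly the bilinearity relations, mirroring the construction of $\otimes_{BV}$ itself. Given a multifunctor $\Phi : \cP \to [\underline{\Hom}]_1(\Q,\R)$, currying gives a bilinear-in-objects assignment on $\obj(\cP)\times\obj(\Q)\to\obj(\R)$ together with, separately, a multifunctor in each variable, i.e.\ exactly a multifunctor $\cP + \Q \to \R$ out of the coproduct $\cP+\Q$ (this is literally the universal property of $\cP+\Q$ recalled in the ``Construction of $\otimes_{BV}$'' subsection). What the curried $\Phi$ need \emph{not} satisfy is the bilinear square relating $f(\phi,b)$ and $f(a,\psi)$; the point of $U$ is that it remembers a map out of $\cP+\Q$ while forgetting whether that bilinearity square commutes. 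So I would define $E(\Phi)$ to be the multifunctor $\cP+\Q\to\R$ pushed out along the same collection maps $\coprod_{(\phi,\psi)}FX(\phi,\psi)\rightrightarrows FU(\cP+\Q)$, $\coprod_{(\phi,\psi)}FX(\phi,\psi)\to \coprod_{(\phi,\psi)}FY(\phi,\psi)$ used to construct $\cP\otimes_{BV}\Q$ from $\cP+\Q$ — but now applied \emph{relatively over $\R$}, i.e.\ forming the pushout in the over-category $\multi\searrow\R$, equivalently forcing the two composites of $X(\phi,\psi)$ into $U(\cP+\Q)$ to become equal \emph{after} mapping to $\R$. Concretely, $E(\Phi):\cP\otimes_{BV}\Q\to\R$ is the unique filler in the pushout diagram defining $\cP\otimes_{BV}\Q$, which exists precisely because the two maps $FX(\phi,\psi)\to FU(\cP+\Q)\xrightarrow{\Phi}\R$ already agree (their agreement is a \emph{theorem} about $\R$, not a hypothesis, since in any genuine multicategory composition is associative and the block-permutation square commutes — this is the content of Lemma~\ref{natural}'s first diagram read inside $\R$).

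Having defined $E$ on objects of the functor category $[\underline{\Hom}]_1(\cP,[\underline{\Hom}]_1(\Q,\R))$, the next step is to extend $E$ to $k$-operations and check functoriality, so that $E$ is an actual simplicial functor. Here I would use Proposition~\ref{tensorGen} together with Lemma~\ref{natural}: a $k$-operation of $[\underline{\Hom}]_1(\cP+\Q,\R)$ is a family $\xi$ of $k$-operations $\xi_a$ natural with respect to \emph{all} operations of $\cP+\Q$; since the operations of the form $\phi\otimes_{bv}b$ and $a\otimes_{bv}\psi$ generate $\cP\otimes_{BV}\Q$ (the proposition attributed to \cite{EM,FZ}), Lemma~\ref{natural} says naturality on that generating set \emph{is} naturality on all of $\cP\otimes_{BV}\Q$. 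So the very same family $\xi$ is automatically a $k$-operation of $\underline{\Hom}(\cP\otimes_{BV}\Q,\R)$, and $E$ on operations is ``the identity on underlying data.'' This makes functoriality of $E$, and the simplicial enrichment, essentially formal. The adjunction $E\dashv U$ then follows from the universal property we built in: a multifunctor $E(\Phi)=\Phi'\to \Psi$ in $[\underline{\Hom}]_1(\cP\otimes_{BV}\Q,\R)$ corresponds, via precomposition with the quotient $\cP+\Q\to\cP\otimes_{BV}\Q$, to a multifunctor $\Phi\to U(\Psi)$, and conversely any $\Phi\to U(\Psi)$ factors uniquely through the quotient (because $U(\Psi)$, being a multifunctor out of an honest multicategory, kills the bilinearity relations), giving the natural bijection $\multi(E\Phi,\Psi)\cong\multi(\Phi,U\Psi)$ and, upgrading via the enriched adjunction $\underline{\Hom}(\cP\otimes_{BV}\Q,\R)\cong\underline{\Hom}(\cP,\underline{\Hom}(\Q,\R))$ already recorded in the excerpt, the simplicial version.

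The main obstacle I anticipate is not the existence of $E$ on objects — that is forced — but checking that the assignment $\Phi\mapsto E(\Phi)$ is \emph{functorial and simplicially enriched} in a way compatible with $U$, i.e.\ that no coherence is lost when one passes from the strict bijection on $0$-cells to the full simplicial structure. The subtlety is that $U$ is defined ``before taking the quotient,'' so $U$ is not literally restriction along an isomorphism; one must verify that the quotient map $q:\cP+\Q\to\cP\otimes_{BV}\Q$ induces, for every target $\R$, a \emph{fully faithful} simplicial functor $q^*:[\underline{\Hom}]_1(\cP\otimes_{BV}\Q,\R)\hookrightarrow[\underline{\Hom}]_1(\cP+\Q,\R)$ whose essential image is exactly the multifunctors whose curried form lands in the bilinear-relations-respecting part — and then $E$ is the left adjoint to this inclusion, which exists because $q^*$ admits a section on objects (reflective subcategory). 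I would therefore structure the proof as: (i) identify the image of $q^*$ using Proposition~\ref{tensorGen}; (ii) show $q^*$ is fully faithful on operations using Lemma~\ref{natural}; (iii) construct the reflector $E$ via the pushout square for $\cP\otimes_{BV}\Q$, noting the two legs agree after mapping to $\R$ by the multicategory axioms of $\R$; (iv) deduce the adjunction and its simplicial enhancement from the already-established enriched hom-tensor adjunction. Steps (i)–(ii) are where the real work lies; (iii)–(iv) are bookkeeping.
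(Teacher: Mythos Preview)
Your proposal misidentifies what $E$ is supposed to be. You construct $E(\Phi)$ as a multifunctor $\cP\otimes_{BV}\Q\to\R$ (then composed with the quotient $q:\cP+\Q\to\cP\otimes_{BV}\Q$), arguing that the bilinearity relations are ``automatic'' for the curried map. But if that were so, then the codomain of $U$ would already coincide with $[\underline{\Hom}]_1(\cP\otimes_{BV}\Q,\R)$ and $(E,U)$ would collapse to the tensor--hom adjunction the paper has \emph{already} recorded two lines above the lemma. That is not what the lemma is asserting, and it is not how $(E,U)$ is used afterwards. In your last paragraph you even reverse the direction: you describe $E$ as ``the left adjoint to this inclusion'' $q^*$, but $q^*$ goes from $[\underline{\Hom}]_1(\cP\otimes_{BV}\Q,\R)$ into $[\underline{\Hom}]_1(\cP+\Q,\R)$, which is the \emph{opposite} direction to $U$; the functor you build is a section of something, not a left adjoint to $U$.

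The paper's own proof is the single sentence ``This can be checked explicitly on generators and relations,'' and the intended content is made clear by the paragraph immediately following the lemma and by Proposition~\ref{Hochschildgood}: the pair $(E,U)$ extends to $U:(\cP+\Q)\searrow\multi(\C)\leftrightarrows{}_{\cP}\M_{\Q}:E$, with $E$ the \emph{enveloping} functor. Concretely, $E$ applied to the bimodule $\cP^{\circ(n+2)}$ returns the free multicategory on the underlying collection $U(\cP^{\circ n})$ together with a structure map from $\cP+\cP$; and $E(\cP)\cong\cP$ under the fold map (Example~\ref{PEndP}). This is a genuine free-type construction---one presents the bimodule by generators and relations as a collection with compatible left $\cP$- and right $\Q$-actions, and $E$ freely generates a multicategory under $\cP+\Q$ from that data---not a currying isomorphism. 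Your argument never builds such an object: for a fixed target $\R$ there is nothing to quotient, and your $E(\Phi)$ lands in the image of $q^*$, whereas the enveloping $E$ applied to a generic bimodule produces a \emph{new} multicategory that need not map to any pre-given $\R$ at all.
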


\begin{proof}This can be checked explicitly on generators and relations.\end{proof} 

In the next section we show that the adjoint pair $(E,U)$ can be easily extend an adjoint pair of functors $U:\cP +\Q\searrow\multi(\C)\leftrightarrows {}_{\cP}\M_{\Q}: E$, where ${}_{\cP}\M_{\Q}$ denotes the category of pointed $\cP$-$\Q$-bimodules. In this case, we will refer to $E$ as the \emph{enveloping functor}. The functor $E$ is left adjoint to a functor which preserves fibrations and weak equivalences, and thus $E$ preserves cofibrant objects and weak equivalences between cofibrant objects. We define the \emph{left derived functor} of $E$, denoted $\mathbb{L}E$, as $\mathbb{L}E(M):=E(M_{c})$, where $M_c$ is a cofibrant resolution of the object $M$ as a $\cP$-$\Q$-bimodule. Note that this left derived functor $\mathbb{L}E$ lands in the category $\cP +\Q\searrow\multi(\C)$ rather than the homotopy category.\

An operad $\cP$, like any monoid, can be considered as a $\cP$-$\cP$-bimodule over itself. An important property of $\mathbb{L}E$ is that $\mathbb{L}E(\cP)=\cP$, when the cofibrant resolution of $\cP$ we take is the Hochschild resolution, $H_*(\cP)$.

\begin{example}[Endomorphism Modules] For any two objects $X,Y$ in a symmetric monoidal category $(\M,\otimes, \unit_{M})$ we can define a collection whose $k$-operations are given by\begin{equation*}End_{X,Y}(k):=\M(X^{\otimes k},Y).\end{equation*}This collection can be given a $\Sigma_{k}$-action by permuting the source factors. We have natural composition products\begin{equation*}\circ_{i}:\M(X^{\otimes k},Y)\otimes \M(X^{\otimes l},X)\lra\M(X^{\otimes k+l-1},Y)\end{equation*}which implies that $\End_{X,Y}$ is a right module over the endomorphism operad $\End_{X}$. Given that $X$ is a $\Q$-algebra, i.e. that there exists an operad homomorphism $\alpha:\Q\ra\End_{X}$ then we can say that $\End_{X,Y}$ is a right $\Q$-module by restriction along the structure map.\ 

At the same time, we could consider the composition maps\begin{equation*}\M(Y^{\otimes r},Y)\otimes \M(X^{\otimes n_1}, Y)\otimes...\otimes\M(X^{\otimes n_r},Y)\lra\M(X^{\otimes n_1+...+n_r},Y)\end{equation*}which makes $\End_{X,Y}$ into a left module over the operad $\End_{Y}$. If $Y$ is a $\cP$-algebra, i.e. there exists an map $\beta:\cP\ra\End_{Y}$ then $\End_{X,Y}$ is a left $\cP$-module by restriction. \end{example} 

\begin{example}\label{PEndP}Consider an operad $\cP$ as a $\cP$-$\cP$-bimodule over itself. One way to do this is to recall that there exists a natural collection $\End_{\cP,\cP}:=\End_{\cP}$, which has two structure maps which are isomorphisms $\cP\ra\End_{\cP}$ which commute in a universal way. In other words, we could consider the $\cP$-$\cP$-bimodule structure on $\End_{\cP}$ as belonging to the space $\underline\Hom_{1}(\cP,\underline\Hom_{1}(\cP,\End_{\cP}))$. Once we apply the functor $E$, we  are consider $\End_{\cP}$ as a multicategory, together with a map from $\cP +\cP\lra\End_{\cP}$. This map is induced by the structure maps $\cP\ra\End_{\cP}$, which are isomorphisms, and thus $\cP +\cP\lra\End_{\cP}=\cP$ is just the fold map.\end{example}

\begin{lemma}The diagonal functor commutes with $\mathbb{L}E$. In particular, $E(\textrm{diag}(H_{*}\cP))$ is isomorphic to $\textrm{diag}(E(H_{*}\cP))$ in $(\cP +\cP)\searrow\multi(\C)$.\end{lemma}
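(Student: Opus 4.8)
The plan is to reduce the claim to a statement about how the enveloping functor $E$ interacts with colimits and with the levelwise structure of simplicial objects. The key observation is that the Hochschild resolution $H_\bullet\cP$ is a simplicial $\cP$-$\cP$-bimodule whose diagonal $\mathrm{diag}(H_*\cP)$ is, by construction, the colimit (over $\scop$) of the diagram $[n]\mapsto H_n\cP$ along with the simplicial replacement data. Since $\mathrm{diag}(H_*\cP)$ is cofibrant as a pointed $\cP$-$\cP$-bimodule (as recorded just before Corollary~\ref{Barcomplexcylinder}), we have $\mathbb{L}E(\mathrm{diag}(H_*\cP)) = E(\mathrm{diag}(H_*\cP))$ honestly, with no further resolution needed. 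So the content of the lemma is purely the commutation $E(\mathrm{diag}(H_*\cP)) \cong \mathrm{diag}(E(H_*\cP))$ as objects of $(\cP+\cP)\searrow\multi(\C)$.

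First I would record that $E$, being a left adjoint (Lemma~\ref{enveloping}, as extended to bimodules in the discussion following it), preserves all colimits. The diagonal of a simplicial object $Z_\bullet$ in $\multi(\C)$ (or in bimodules) can be expressed as a coend $\int^{[n]\in\scop} Z_n \times \Delta[n]$, equivalently as a colimit over the category of simplices; in particular, the simplicial set underlying $\mathrm{diag}(Z_\bullet)$ in degree $k$ is $Z_k$ in degree $k$, which is exactly how the Hochschild diagonal was defined in the excerpt. Thus $\mathrm{diag}(H_*\cP)$ is a colimit of a diagram built from the $H_n\cP$, and applying $E$ commutes with this colimit, yielding $E(\mathrm{diag}(H_*\cP)) \cong \mathrm{diag}_{[n]} E(H_n\cP) = \mathrm{diag}(E(H_*\cP))$, provided one checks that $E$ commutes with the degreewise simplicial enrichment, i.e. that $E$ applied to the levelwise tensoring $H_n\cP \times \Delta[n]$ agrees with $E(H_n\cP)\times\Delta[n]$. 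This last point follows because $E$ is defined on generators and relations (proof of Lemma~\ref{enveloping}) and the simplicial structure on bimodules and on $\cP+\cP\searrow\multi(\C)$ is levelwise — tensoring with a simplicial set $K$ amounts to taking $K$-many copies of generators, which $E$ respects.

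More carefully, I would run the argument degreewise. Fix a simplicial degree $k$. The $k$-simplices of $\mathrm{diag}(H_*\cP)$ are by definition the $k$-simplices of $H_k\cP = \cP^{\circ(k+2)}$, with the internal simplicial structure of the multicategory $\cP$ providing the simplicial set in each arity. Since $E$ is a left adjoint it commutes with the formation of the underlying collection in each simplicial degree, and the basepoint map $\cP\circ\cP \to \mathrm{diag}(H_*\cP)$ is sent to the fold map $\cP+\cP\to E(\mathrm{diag}(H_*\cP))$ exactly as in Example~\ref{PEndP}. Comparing degreewise: $E$ applied to the $k$-th level $H_k\cP$ of the simplicial bimodule gives the $k$-th level of a simplicial object $E(H_*\cP)$ in $(\cP+\cP)\searrow\multi(\C)$, and because both $\mathrm{diag}$ and $E$ are computed levelwise-then-reassembled, the two composites $E\circ\mathrm{diag}$ and $\mathrm{diag}\circ E$ produce the same object, naturally in $\cP$. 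The face and degeneracy maps match because they are induced by the composition and unit maps of $\cP$, which $E$ preserves (it is a functor).

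\textbf{Main obstacle.} The delicate point is \emph{not} the formal colimit-commutation — that is automatic from $E$ being a left adjoint — but rather verifying that the ambient category in which one takes the diagonal is stable under $E$ in a way compatible with the simplicial enrichment. Specifically, $E$ goes from (pointed $\cP$-$\cP$-bimodules) to $(\cP+\cP)\searrow\multi(\C)$, and one must know that the simplicial tensoring used to define $\mathrm{diag}$ on the source matches the one used on the target under $E$, i.e. that $E$ is a \emph{simplicial} left adjoint (or at least commutes with tensoring by finite simplicial sets). I expect this to follow from the explicit generators-and-relations description of $E$ in Lemma~\ref{enveloping}: tensoring a collection by $\Delta[n]$ replaces each generating operation by an $n$-simplex's worth of generators, and $E$ — being built freely on generators modulo relations pulled back from the bilinear diagrams — manifestly commutes with this operation. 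Making that compatibility precise, rather than the diagonal bookkeeping, is where the real (if routine) work lies.
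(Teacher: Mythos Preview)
Your approach is essentially the one the paper has in mind: the paper's proof is a one-line citation to \cite[5.3]{DH}, which is exactly the ``$E$ is a left adjoint, hence commutes with the colimit computing $\mathrm{diag}$'' argument you spell out. So the colimit-preservation and simplicial-tensoring bookkeeping you describe is correct and matches the underlying reasoning.

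However, you miss the one additional point the paper singles out as the content of the lemma in the many-objects setting: one must check that $E(\mathrm{diag}(H_*\cP))$ and $\mathrm{diag}(E(H_*\cP))$ have the \emph{same set of objects}. In \cite{DH} everything is single-colored, so this is vacuous; here $\multi(\C)$ is fibered over varying object sets, and $(\cP+\cP)\searrow\multi(\C)$ does not fix objects. The diagonal of a simplicial object in this under category is only straightforwardly comparable to $E$ applied to the diagonal of bimodules once you know that the levelwise multicategories $E(H_n\cP)$ all share a common object set and that this set agrees with the object set of $E(\mathrm{diag}(H_*\cP))$. The paper records this as the key observation (``by construction, the multicategories \ldots\ have the same set of objects''), after which the Dwyer--Hess argument goes through verbatim. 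Your ``main obstacle'' about simplicial tensoring is a legitimate technical point, but it is not the obstacle the paper isolates; the object-set check is.
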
 

\begin{proof}The proof follows as in~\cite[5.3]{DH}, once we note that, by construction, the multicategories $E(\textrm{diag}(H_{*}\cP))$ and $\textrm{diag}(E(H_{*}\cP))$ have the same set of objects.\end{proof} 

\begin{prop}\label{Hochschildgood} The multicategory $\textrm{diag}E(H\cP)$ is a cofibrant object in $(\cP + \cP)\searrow\multi(\C)$. Moreover, there exists a weak equivalence of multicategories $\textrm{diag}(EH_*\cP)\lra\cP$.\end{prop}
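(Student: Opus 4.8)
The plan is to deduce both assertions from the corresponding facts about the bar/Hochschild resolution on the level of bimodules, which were recorded just above (namely that $\mathrm{diag}(H_*\cP)$ is cofibrant as a pointed $\cP$-$\cP$-bimodule, equivalently $\cP\circ\cP\cof \mathrm{diag}(H_*\cP)$ is a cofibration, and that the augmentation $\eta\colon \mathrm{diag}(H_*\cP)\we\cP$ is a weak equivalence of pointed $\cP$-$\cP$-bimodules), together with the formal properties of the enveloping functor $E$ from Lemma~\ref{enveloping} and the discussion following it. First I would invoke the previous lemma to replace $\mathrm{diag}E(H\cP)$ by $E(\mathrm{diag}(H_*\cP))$ up to isomorphism in $(\cP+\cP)\searrow\multi(\C)$, so that it suffices to analyze $E$ applied to the single pointed bimodule $\mathrm{diag}(H_*\cP)$ and its augmentation.

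For cofibrancy, the key point is that $E$ is a left Quillen functor: it was shown to be left adjoint to $U$, which preserves fibrations and weak equivalences (hence trivial fibrations), so $E$ preserves cofibrations and cofibrant objects. Since being cofibrant as a \emph{pointed} $\cP$-$\cP$-bimodule means exactly that $\cP\circ\cP\cof \mathrm{diag}(H_*\cP)$ is a cofibration of bimodules, applying $E$ gives a cofibration $E(\cP\circ\cP)\cof E(\mathrm{diag}(H_*\cP))$ in $(\cP+\cP)\searrow\multi(\C)$; and $E(\cP\circ\cP)$ is the initial object $\cP+\cP$ with its fold structure (this is the content of Example~\ref{PEndP}: $E$ applied to the free/unit pointed bimodule $\cP\circ\cP$ is $\cP+\cP$ mapping to itself by the identity, i.e. $E$ sends the basepoint bimodule to the initial object of the under-category). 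A cofibration out of the initial object is precisely the statement that the target is cofibrant, which gives the first claim.

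For the weak equivalence, I would apply $\mathbb{L}E = E(-)_c$ to the augmentation $\eta\colon \mathrm{diag}(H_*\cP)\we \cP$. Since $\mathrm{diag}(H_*\cP)$ is already cofibrant as a pointed bimodule, $\mathbb{L}E(\mathrm{diag}(H_*\cP)) = E(\mathrm{diag}(H_*\cP)) = \mathrm{diag}E(H\cP)$; and by the remark that $\mathbb{L}E(\cP)=\cP$ when the cofibrant resolution used is the Hochschild resolution, the target computes to $\cP$. Because $E$ preserves weak equivalences between cofibrant objects and $\eta$ is a weak equivalence of (cofibrant) pointed bimodules, $E(\eta)\colon \mathrm{diag}E(H\cP)\lra\cP$ is a weak equivalence of multicategories. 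Concretely, one checks this on the two conditions \textbf{W1} and \textbf{W2}: the map is an isomorphism on object sets (it fixes objects, as all the multicategories here have the object set of $\cP$), so \textbf{W2} reduces to \textbf{W1} on $1$-operations, and \textbf{W1} follows because $E$ is built from free constructions and colimits that, after taking diagonals, realize the bimodule-level weak equivalence $\eta$ simplicially.

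\textbf{Main obstacle.} The routine verification that $E$ is left Quillen, and that $E$ of the basepoint bimodule $\cP\circ\cP$ is the initial object of the under-category, is straightforward from the adjunction and Example~\ref{PEndP}. The genuine subtlety is the identification $\mathbb{L}E(\cP)=\cP$ and, more precisely, that the weak equivalence $\eta$ of pointed bimodules is carried by $E$ to a levelwise weak equivalence of multicategories rather than merely a map inducing an equivalence of homotopy categories; this is where one must use that $E$ is defined via free non-symmetric multicategory functors and pushouts that commute with the diagonal (the previous lemma), so that $E(\mathrm{diag}(H_*\cP))$ is computed degreewise and the degreewise weak equivalences $H_n\cP \to \cP$ assemble, after passing to the diagonal, into a weak equivalence. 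I expect to lean on the analogue in Dwyer--Hess~\cite[5.2,5.3]{DH} for the bookkeeping here, with the symmetric-group actions added in the systematic way described in the construction of $\otimes_{BV}$.
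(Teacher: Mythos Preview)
Your cofibrancy argument is essentially sound: $E$ is left Quillen, hence sends the cofibration $\cP\circ\cP\cof\mathrm{diag}(H_*\cP)$ to a cofibration out of the initial object of $(\cP+\cP)\searrow\multi(\C)$. One small correction: Example~\ref{PEndP} computes $E(\cP)\cong\cP$, not $E(\cP\circ\cP)$; the identification $E(\cP\circ\cP)\cong(\cP+\cP)$ follows instead from the bare fact that left adjoints preserve initial objects. The paper takes a different route here, giving an explicit degreewise description of $E(H_n\cP)$ as the free multicategory on the underlying collection $U(\cP^{\circ n})$ under $\cP+\cP$; cofibrancy is then read off directly from this free description rather than from abstract left-Quillen reasoning.

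Your weak-equivalence argument, however, has a genuine gap that you yourself flag. Ken Brown requires both source and target of $\eta$ to be cofibrant pointed bimodules, but $\cP$ is not cofibrant in ${}_{\cP}\M_{\cP}$---that is precisely why one replaces it by the Hochschild resolution. Invoking $\mathbb{L}E(\cP)=\cP$ at this point is circular: that identity is exactly the content of the second clause of the proposition. The paper avoids this circularity by not attempting to push the bimodule-level weak equivalence through $E$ at all. Instead, the explicit identification of $E(H_n\cP)$ with a free multicategory on $U(\cP^{\circ n})$ under $\cP+\cP$ exhibits $E(H_*\cP)$ as a bar-type (cotriple) resolution of $\cP$ \emph{inside} $(\cP+\cP)\searrow\multi(\C)$, and the augmentation is then a weak equivalence by the standard extra-degeneracy/simplicial-contraction argument for such resolutions. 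In other words, the paper transports the bar-resolution structure, not the weak equivalence, across $E$; this is the key step you would need to supply in place of the Ken Brown appeal.
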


This is a many objects version of ~\cite[5.4]{DH}. We prolong the functor $E$, applying $E$ to $H_*\cP$ degree-wise. Since in each degree $H_{n}\cP$ is a pointed bimodule (the basepoints come from the image of $H_0(\cP)$ under degeneracies), it follows that $E(H_n(\cP))$ is a simplicial object in the category $(\cP + \cP)\searrow\multi(\C)$ for each $n\ge0$. The functor $E$ commutes with composition and units, i.e. the face and degeneracy maps, and thus $E(H(\cP))$ is a simplicial object in $(\cP + \cP)\searrow\multi(\C)$.\

It follows that the multicategory $\textrm{diag}E(H_*\cP)$ is an object under $\cP + \cP$. Since we know that $E(\cP)$ is isomorphic to $\cP$ in the category $(\cP +\cP)\searrow\multi(\C)$, this implies that the augmentation map $\eta: \textrm{diag}(E(H_*\cP))\lra E(\cP)$ factors the the fold map $\cP + \cP\lra\cP$.\

\begin{proof}[Proof of Proposition]The key observation is that $E(\cP)$ is isomorphic to $\cP$ under $\cP+\cP$. It follows that $E(H_{n}\cP)=E(\cP^{\circ (n+2)})$ is isomorphic to the free multicategory on $U(\cP^{\circ n})$, the underlying collection of $\cP^{\circ n},$ together with a map from $\cP +\cP$ (coming from the basepoint). \end{proof} 

\subsection{Spaces of Algebra Structures}The category of right $\Q$-modules, $\M_{\Q}$, is a symmetric monoidal $\C$-category (see appendix) and, as such, it makes sense to define generalized $\cP$-algebras taking values in $\M_{\Q}$. More explicitly, a $\cP$-algebra structure on a right $\Q$-module $M$ is a multifunctor from $\cP$ to $\End_{\Q}(M)$ (see also,~\cite{FresseBook}). The endomorphism multicategory $\End_{\Q}(M)$ has $n$-ary operations given by$$\Hom_{\M_{\Q}}(M^{\otimes n}, M)$$which is the space of right $\Q$-module homomorphisms from the $n$-fold tensor product of $M$ to $M$. This forms a simplicial multicategory in the usual way, and has both a natural left $\End_{\Q}(M)$-action and a natural right $\Q$-action which makes $\End_{\Q}(M)$ into an $\End(M)$-$\Q$-bimodule. The $\cP$-$\Q$-bimodule structures are in one-to-one correspondence with $\cP$-algebra structures, i.e. operad homomorphisms $\cP\lra\End(M)$.\

\begin{example}The multicategory $\Q$ is naturally a right $\Q$-module over itself, and the left action of $\Q$ on itself gives an equivalence $\End_{\Q}(\Q)\cong\Q.$ \end{example}

For a fixed object $X$ in $\M$, the simplicial set $[\underline{\Hom}]_{1}(\cP,\End(X))$ is the \emph{space of $\cP$-algebra structures} on the object $X$. This is again due to the fact that the endomorphism operad of an object $X\in\M$ is the universal object in $\multi(\C)$ acting on $X$, i.e. that any action on $X$ by an object $\cP$ in $\multi(\C)$ is the restriction of the $\End(X)$-action on $X$ along a uniquely determined morphism $\cP\rightarrow\End(X)$.  

\begin{example}Let $\mathbb{C}$ denote the commutative operad and let $\widehat{\Sets}$ denote the underlying multicategory of $\Sets$, namely the $n$-ary operations of $\widehat{\Sets}$ are given by$$\Sets(x_1\times...\times x_n;x).$$ One can check straight from the definitions that $[\underline{\Hom}]_{1}(\mathbb{C},\widehat{\Sets})$ is isomorphic to the category of commutative monoids.\end{example}

\subsection{The Moduli Space of Algebra Structures}Fix a simplicial multicategory $\cP$ with object set $\obj(\cP)=S$. Let $\M$ be a symmetric monoidal category which is tensored and cotensored over $\sSets$, and let $\M^{S}$ denote the category obtained as the product of copies of the category $\M$ indexed over $S$. If $\M$ is a (simplicial) monoidal model category, the the product category $\M^{S}$ inherits a (simplicial) model structure from the model structure on $\M$, where the fibrations, cofibrations, and weak equivalences formed coordinatewise. The simplicial category  $[\underline{\Hom}(\cP,\M)]_{1}$ is a category of algebras over a \emph{triple}, or \emph{monad}, $T$. Explicitly, if we fix an object $x$ of $\cP$, and let $A$ be an object in $\M^{S}$, then we have a triple$$T:=(T(A))_{x}=\amalg_{n\geq 0}(\amalg_{x_{1},\dotsc,x_{n}\in \obj(\cP)}\cP(x_{1},\dotsc,x_{n};x)\otimes_{\Sigma_{n}} (A(x_{1})\otimes... \otimes A(x_{n})),$$let $\eta:A\to T(A)$ be the map$$A(x)\lra\{\id_{x}\}\otimes A_{x}\to \cP(x;x)\otimes A(x)\to (T(A(x)),$$ and let $\mu:TT(A)\to T(A)$ just be the map induced by the composition operations of $\cP$.\

\begin{remark}If we consider $T$ as a functor$$T:\M^{S}\lra[\underline{\Hom}]_{1}(\cP,\M),$$then one can easily check that $T$ is the left adjoint to the forgetful functor$$[\underline{\Hom}]_{1}(\cP,\M)\lra\M^{S}.$$In other words, $T(A)$ is precisely the free $\cP$-algebra on $A:=\{A(x)|A(x)\in\M\}_{x\in\obj(\cP)}$. Denote the free $\cP$-algebra on $A$ by $F_{\cP}(A)$.\end{remark}

The following theorem is an easy generalization of~\cite[Theorem 11.2]{EM}, which is itself a generalization of~\cite{May}. 

\begin{theorem}\cite{EM}\cite{May}Given the triple $T$ above on the category $\M$, a $T$-algebra structure on an object of $\M$ is equivalent to a simplicial multifunctor from $\cP$ to $\M$, and the simplicial category of $T$-algebras is isomorphic to the simplicial category $[\underline{\Hom}]_{1}(\cP,\M).$\end{theorem}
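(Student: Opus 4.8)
The plan is to exhibit the monad $T$ as the monad associated to the free--forgetful adjunction, and then invoke the classical monadicity package of May--Elmendorf--Mandell, tracking the many-objects bookkeeping. First I would record the adjunction $T \dashv U$ from the Remark preceding the statement: the forgetful functor $U\colon[\underline{\Hom}]_1(\cP,\M)\to\M^S$ sends a simplicial multifunctor $A\colon\cP\to\M$ to the $S$-indexed family $\{A(x)\}_{x\in\obj(\cP)}$, and $T$ as written is visibly the left adjoint, with unit $\eta$ and the composition-induced $\mu$ making $(T,\mu,\eta)$ a monad on $\M^S$ (associativity and unitality of $\mu$ are exactly the associativity and unit coherence of $\cP$, applied coordinatewise over $S$). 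The content of the theorem is then that this adjunction is monadic, i.e. that the comparison functor $[\underline{\Hom}]_1(\cP,\M)\to (\M^S)^T$ from simplicial multifunctors to $T$-algebras is an isomorphism of simplicial categories.

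Next I would construct the comparison functor in both directions and check they are mutually inverse. Given a simplicial multifunctor $A\colon\cP\to\M$, the structure maps $\cP(x_1,\dots,x_n;x)\otimes A(x_1)\otimes\cdots\otimes A(x_n)\to A(x)$ (which is the same data as a map $\cP(x_1,\dots,x_n;x)\to\End(A)(x_1,\dots,x_n;x)=\M(A(x_1)\otimes\cdots\otimes A(x_n),A(x))$, i.e. an operation of $\End(A)$) assemble, after coequalizing the $\Sigma_n$-action and summing over $n$ and over the $x_i$, into a single map $T(A)\to A$ in $\M^S$; the $\Sigma_n$-equivariance of the multifunctor is what makes this descend to $\otimes_{\Sigma_n}$. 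The monad axioms for this $T$-algebra structure are the compatibility of the multifunctor with the composition maps $\circ_i$ of $\cP$ and with the units $\eta_x$. Conversely, a $T$-algebra structure $a\colon T(A)\to A$ restricts along the wedge summand indexed by a fixed signature $x_1,\dots,x_n;x$ to maps $\cP(x_1,\dots,x_n;x)\otimes A(x_1)\otimes\cdots\otimes A(x_n)\to A(x)$, hence by adjunction to $\cP(x_1,\dots,x_n;x)\to\End(A)(x_1,\dots,x_n;x)$; the $T$-algebra associativity square forces these to respect composition, and the unit square forces them to send $\id_x$ to $\id_{A(x)}$, so they define a multifunctor $\cP\to\End(A)$, which is by definition an object of $[\underline{\Hom}]_1(\cP,\M)$. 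That these two assignments are inverse on objects and morphisms, and compatible with the simplicial enrichment (both $\Hom$-objects being computed as the evident equalizer in $\sSets$), is a direct unwinding. Finally I would remark that the passage from $[\underline{\Hom}]_1(\cP,\M)$ — the linear part of the internal hom multicategory of Definition~\ref{Homs} — to "simplicial multifunctors from $\cP$ to $\M$" is a matter of definition once $\M$ is viewed as a $\C$-multicategory via its own endomorphism operads, so no extra work is needed there.

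The one genuine point requiring care, and where I expect the bookkeeping to be heaviest, is the bookkeeping over the object set $S$ and the interaction of the coproducts $\amalg_{x_1,\dots,x_n\in\obj(\cP)}$ with the $\Sigma_n$-coinvariants and with the (co)tensoring of $\M$ over $\sSets$ — one must check that $T$ really is a functor $\M^S\to(\M^S)$ (each component $(T(A))_x$ must again be an object of $\M$, which uses that $\M$ is cocomplete enough for the indexed coproducts and the $\Sigma_n$-quotients to exist, guaranteed by $\M$ being a simplicial monoidal model category tensored and cotensored over $\sSets$), and that $\mu$ is well defined, which requires matching up a nested coproduct $T(T(A))_x=\amalg_m\amalg_{y_j}\cP(y_1,\dots,y_m;x)\otimes_{\Sigma_m}\bigotimes_j(T(A))_{y_j}$ with the single coproduct defining $T(A)_x$ along the composition maps of $\cP$, compatibly with all the symmetric group actions. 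Since this is exactly the content of the cited~\cite[Theorem 11.2]{EM} (itself after~\cite{May}) in the many-objects setting, I would present it as a generalization and refer the reader there for the diagram chases, sketching only the construction of $T$ and the comparison functor as above; none of the arguments are obstructed by the passage from a single object to the set $S$, they merely acquire an extra index.
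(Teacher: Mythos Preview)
Your proposal is correct and in fact goes beyond what the paper does: the paper gives no proof at all, stating the result as ``an easy generalization of~\cite[Theorem 11.2]{EM}, which is itself a generalization of~\cite{May}'' and simply citing those references. Your sketch of the monadicity argument, together with your concluding remark that you would ultimately refer the reader to \cite{EM} for the diagram chases, is exactly in line with this.
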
 

\begin{corollary}\label{bicomplete}The category $[\underline{\Hom}]_{1}(\cP, \M_{\Q})$ is a symmetric monoidal category over $\C$ and has all small limits and colimits.\end{corollary}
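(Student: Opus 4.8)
The statement to prove is Corollary~\ref{bicomplete}: that $[\underline{\Hom}]_1(\cP,\M_\Q)$ is a symmetric monoidal category over $\C$ and has all small limits and colimits.

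My plan is to deduce this corollary from the preceding theorem, which identifies $[\underline{\Hom}]_1(\cP,\M_\Q)$ with the category of $T$-algebras for the triple $T$ constructed above, acting on the product category $(\M_\Q)^S$. The two assertions then split into two independent parts. First, completeness and cocompleteness: since $\M_\Q$, the category of right $\Q$-modules, is itself bicomplete (this is standard for modules over an operad in a bicomplete category, and $\C = \sSets$ is bicomplete), the product category $(\M_\Q)^S$ is bicomplete, being a product of bicomplete categories. Then one invokes the standard fact that for a triple (monad) $T$ on a bicomplete category $\cD$ which preserves filtered colimits — or more simply, using that $T$ here is built from coproducts, tensors, and coinvariants, all of which are colimits — the category of $T$-algebras $\cD^T$ is again bicomplete: limits in $\cD^T$ are computed in $\cD$, and colimits exist by the reflexive-coequalizer / transfinite-construction argument for algebras over a monad (cf. Borceux, or the discussion in~\cite{EM}). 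So the bicompleteness of $[\underline{\Hom}]_1(\cP,\M_\Q)$ follows from that of $\M_\Q$.

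Second, the symmetric monoidal structure over $\C$: here I would use the observation recorded earlier in the paper that $\M_\Q$ is a symmetric monoidal $\C$-category, together with the general principle that a category of $\cP$-algebras in a symmetric monoidal $\C$-category inherits a symmetric monoidal structure. Concretely, the tensor product of two $\cP$-algebras $A,B$ in $\M_\Q$ is their tensor product $A\otimes B$ in $\M_\Q$, equipped with the $\cP$-action obtained from the diagonal-type structure — this is exactly where one needs $\cP$ to behave well with respect to the monoidal structure. In fact for this step one should be careful: a general operad does not make its algebra category monoidal, but here $\M_\Q$ is Cartesian-ish / the relevant structure comes from $\Q$ being fixed and from the Hopf-operad-type discussion near the construction of $\otimes_{BV}$; alternatively, one restates the result as: $[\underline{\Hom}]_1(\cP,\M_\Q)$ is the category of $\cP$-algebras in $\M_\Q$, and since $\M_\Q$ is a symmetric monoidal $\C$-category, the internal-hom/tensor adjunction of $\underline{\Hom}$ transports, making $[\underline{\Hom}]_1(\cP,\M_\Q)$ enriched and tensored over $\C$. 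The simplicial tensoring is inherited levelwise from $\M_\Q$, and the unit is the free $\cP$-algebra on the unit of $\M_\Q$.

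The main obstacle I anticipate is pinning down the symmetric monoidal structure precisely, rather than the bicompleteness (which is routine monad theory). One has to say exactly which monoidal product is meant — it is the one coming from $\M_\Q$ as a symmetric monoidal $\C$-category, with $\cP$-algebra structures combined via the comultiplication/diagonal available because we are working with $\C = \sSets$ (Cartesian) so that $\M_\Q$'s monoidal product interacts correctly with the monad $T$. I would therefore spend the bulk of the proof checking that $T$ is a "monoidal monad" (or that the relevant coherences hold) so that $T$-algebras form a symmetric monoidal category, citing~\cite[Section 11]{EM} for the analogous verification and noting that the $S$-indexed-product bookkeeping does not affect the argument. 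The remainder — transporting the $\C$-enrichment and the tensoring/cotensoring over $\sSets$ through the free-forgetful adjunction, and invoking bicompleteness of $\M_\Q$ — is then a direct application of the cited theorem and standard facts, and I would present it briefly.

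\begin{proof} By the preceding theorem, the category $[\underline{\Hom}]_1(\cP,\M_\Q)$ is isomorphic to the category of algebras over the triple $T$ on the product category $(\M_\Q)^S$, where $S=\obj(\cP)$. The category $\M_\Q$ of right $\Q$-modules is a symmetric monoidal $\C$-category which is bicomplete, hence so is the product $(\M_\Q)^S$, with all structure formed coordinatewise. Since $T$ is built entirely out of coproducts, symmetric monoidal products, and coinvariants, it preserves filtered colimits; therefore the category of $T$-algebras is again bicomplete, with limits created by the forgetful functor and colimits constructed by the usual reflexive-coequalizer argument for algebras over a monad (see~\cite{EM} and the references therein). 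This establishes the existence of all small limits and colimits.\

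For the monoidal structure, we use that $T$ is a monoidal monad with respect to the symmetric monoidal $\C$-structure on $(\M_\Q)^S$: given right $\Q$-modules $A$ and $B$ with $\cP$-algebra structures, the comultiplication available since $\C=\sSets$ is Cartesian equips the product $A\otimes B$ (formed in $\M_\Q$) with a canonical $\cP$-algebra structure, and the unit object is the free $\cP$-algebra $F_\cP(\unit)$ on the unit of $\M_\Q$. The coherence isomorphisms are inherited from those of $\M_\Q$; the verification is the $S$-indexed, $\Q$-linear version of~\cite[Section 11]{EM} and goes through without change. Finally, the tensoring and cotensoring over $\sSets$ descend levelwise from $\M_\Q$ through the free-forgetful adjunction $T\dashv U$, so $[\underline{\Hom}]_1(\cP,\M_\Q)$ is a symmetric monoidal category over $\C$.\end{proof}
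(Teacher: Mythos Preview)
The paper states this corollary without proof, treating it as immediate from the preceding theorem identifying $[\underline{\Hom}]_1(\cP,\M_\Q)$ with $T$-algebras. Your approach is therefore the natural one and is essentially what the paper leaves implicit.

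Your argument for bicompleteness is correct and standard: limits are created by the forgetful functor, and colimits exist because $T$ preserves filtered (indeed reflexive) colimits.

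For the symmetric monoidal structure, your instinct that something special is needed is right, and you correctly locate the source in the Cartesian nature of $\C=\sSets$. But the argument can be simplified, and one detail is off. Since the tensor on $\M_\Q$ is the pointwise one lifted from collections, and $\C=\sSets$ is Cartesian, the monoidal structure on $\M_\Q$ \emph{is} the categorical product. In any category of algebras over a monad, finite products are created by the forgetful functor; hence $[\underline{\Hom}]_1(\cP,\M_\Q)$ is automatically Cartesian monoidal, with no need to verify that $T$ is a monoidal monad or to invoke comultiplications explicitly. The $\C$-tensoring then comes along for the ride via the usual reflexive-coequalizer formula you would cite from the later simplicial-model-category corollary. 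The one concrete slip: in this Cartesian structure the unit object is the \emph{terminal} $\cP$-algebra, not the free algebra $F_\cP(\unit)$.
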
We know that the category of right $\Q$-modules $\M_{\Q}$ admits a cofibrantly generated monoidal model category structure over $\sSets$. In this case, a map of right $\Q$-modules is a cofibration of right $\Q$-modules if, and only if it, is a retract of a relative $I$-complex where$$I:=\{K\circ\Q\longrightarrow L\circ\Q|n\ge0\}$$and $K\rightarrow L$ runs over the generating cofibrations of $\coll(\C)$. A map of right $\Q$-modules is an acyclic cofibration if, and only if, it is a retract of a relative $J$-complex, where$$J:=\{K\circ\Q\longrightarrow L\circ\Q|n\ge0\}$$ and $K\rightarrow L$ runs over the generating acyclic cofibrations of $\coll(\C)$ (for more on the model structure of $\coll(\C)$, see~\cite{BM07}).\

The generating (acyclic) cofibrations for the product category $\M_{\Q}^{S}$ are defined similarly. Explicitly, for a fixed object $x$ in $\cP$ we let $\iota_{x}:\M_{\Q}\longrightarrow\M_{\Q}^{S}$ be the left adjoint to the evaluation functor $Ev_{x}:\M_{\Q}^{S}\longrightarrow\M_{\Q}$, i.e. given a fixed right $\Q$-module $A$ and an arbitrary object $y$ in $\cP$, the object $(\iota_{x}A)_{y}$ in $\M_{\Q}^{S}$ is either $A$ if $x=y$ or trivial otherwise. Now, we can define the sets$$\iota_{*}I:=\{\iota_{x}f|f\in I, x\in\obj(\cP)\}$$and$$\iota_{*}J:=\{\iota_{x}f|f\in J,x\in\obj(\cP)\}.$$So, a map is a cofibration of $\M_{\Q}^{S}$ if, and only if, it is a retract of $\iota_{*}I$; a map of $\M_{\Q}$ is an acyclic cofibration of $\M_{\Q}^{S}$ if, and only if, it is a retract of $\iota_{*}J$. 

\begin{theorem}[Model Structure]Let $\cP$ and $\Q$ be two simplicial multicategories. If the category of right $\Q$-modules admits a cofibrantly generated model category structure then the category $[\underline{\Hom}]_{1}(\cP,\M_{\Q})$ admits a cofibrantly generated model category structure where a morphism is a weak equivalence (respectively, fibration) if the underlying map of right $\Q$-modules is a weak equivalence (respectively, fibration).\end{theorem}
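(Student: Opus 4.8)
The plan is to obtain the model structure by transferring (right-lifting) the cofibrantly generated structure on $\M_{\Q}^{S}$ along a free--forgetful adjunction. By the theorem preceding this statement, $[\underline{\Hom}]_{1}(\cP,\M_{\Q})$ is the category of algebras over the monad $T$ on $\M_{\Q}^{S}$ described in the remark above, and by Corollary~\ref{bicomplete} it is bicomplete. Write $U\colon[\underline{\Hom}]_{1}(\cP,\M_{\Q})\to\M_{\Q}^{S}$ for the forgetful functor and $F_{\cP}=T$ for its left adjoint. I would then invoke the standard transfer theorem for cofibrantly generated model categories (see, e.g., \cite{BM07, Hir03}): if $\M_{\Q}^{S}$ is cofibrantly generated with generating (trivial) cofibrations $\iota_{*}I$ and $\iota_{*}J$, and if (i) the domains of $F_{\cP}(\iota_{*}I)$ and $F_{\cP}(\iota_{*}J)$ are small relative to the corresponding cell complexes, and (ii) every relative $F_{\cP}(\iota_{*}J)$-cell complex is carried by $U$ to a weak equivalence in $\M_{\Q}^{S}$, then $[\underline{\Hom}]_{1}(\cP,\M_{\Q})$ carries a cofibrantly generated model structure with generating cofibrations $F_{\cP}(\iota_{*}I)$, generating trivial cofibrations $F_{\cP}(\iota_{*}J)$, and with weak equivalences and fibrations exactly the maps sent by $U$ to weak equivalences, respectively fibrations, in $\M_{\Q}^{S}$. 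This is precisely the asserted structure, so the work is to verify (i) and (ii).

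For (i), the key point is that $U$ preserves, indeed creates, filtered colimits: the monad $T$ is assembled from coproducts, $\Sigma_{n}$-quotients, and the functors $(-)\mapsto\cP(x_{1},\dots,x_{n};x)\otimes(-)$, each of which commutes with filtered colimits in the closed symmetric monoidal category $\M_{\Q}^{S}$. Since $\M_{\Q}^{S}$ is cofibrantly generated, the small domains of $\iota_{*}I$ and $\iota_{*}J$ remain small after applying $F_{\cP}$, so the small object argument supplies the required functorial factorizations. The identification of fibrations and trivial fibrations via $U$, together with the two-out-of-three and retract axioms and the lifting axioms, is then formal from the adjunction.

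The main obstacle is (ii), the acyclicity condition. Since weak equivalences in $\M_{\Q}^{S}$ are closed under the relevant transfinite compositions, it suffices to treat a single pushout in $[\underline{\Hom}]_{1}(\cP,\M_{\Q})$ of a map $F_{\cP}(j)$ along an arbitrary algebra map, where $j\in\iota_{*}J$. For such a pushout one uses the standard filtration of the underlying object of a free extension of $T$-algebras (as in \cite{EM}; compare the filtration of the relative $W$-construction discussed above): the underlying map in $\M_{\Q}^{S}$ is exhibited as a transfinite composite of pushouts of maps built from $j$ by tensoring with operation-objects $\cP(x_{1},\dots,x_{n};x)$ and passing to $\Sigma_{n}$-orbits. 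Because $\M_{\Q}$ is enriched in, and built from, $\sSets$ --- where the product with any object preserves weak equivalences, so that $\M_{\Q}$ and $\M_{\Q}^{S}$ satisfy the pushout-product and monoid-type axioms --- each such map is a weak equivalence, and hence so is the entire cell complex. The delicate point in this step is controlling the $\Sigma_{n}$-orbits: one must know that quotienting by the symmetric groups does not destroy the weak equivalence, and this is exactly where the good behaviour of $\sSets$ (every simplicial set is ``flat'') enters, allowing us to avoid any $\Sigma$-cofibrancy hypothesis on $\cP$. Finally, cofibrant generation with the stated generators is built into the transfer theorem, and one checks routinely that the resulting structure is right proper (inherited from $\M_{\Q}^{S}$) and simplicial when $\M_{\Q}$ is, since the tensor and cotensor over $\sSets$ lift along $U$.
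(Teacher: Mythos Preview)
Your proposal is correct and follows essentially the same route as the paper: both transfer the model structure along the free--forgetful adjunction $F_{\cP}\dashv U$ with generating sets $F_{\cP}(\iota_*I)$ and $F_{\cP}(\iota_*J)$, and both isolate the acyclicity of relative $F_{\cP}(\iota_*J)$-cell complexes as the only nontrivial step, handled by filtering the pushout of a free map. The paper carries this out by writing down the Schwede--Shipley style filtration explicitly (the objects $G_{x_1,\dots,x_k}(A)$ and $C_{k,i}(f)$, following \cite{SS}) and then verifying the model category axioms by hand rather than quoting a packaged transfer theorem; your invocation of the transfer theorem from \cite{BM07,Hir03} is a legitimate shortcut that compresses the same argument. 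One small remark: your appeal to the ``flatness'' of simplicial sets to control the $\Sigma_n$-orbits is more explicit than what the paper says---the paper's lemma simply asserts at the end that each stage $X_{k-1}\to X_k$ of the filtration is a weak equivalence---so on that point you have actually said more, not less.
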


We will defer the proof of the theorem to the appendix. 

\begin{definition}A right $\Q$-module $M$ is \emph{pointed} if it comes with a unit map$$\Q=\unit\circ\Q\longrightarrow M\circ\Q\longrightarrow M.$$ A pointed right $\Q$-module $M$ will be called \emph{quasi-free} if the natural map$$\Q\lra\End_{\Q}(M)$$is weak equivalence of right $\Q$-modules. We will say that a $\cP$-$\Q$-bimodule is pointed (respectively, \emph{right quasi-free}) if it is pointed (respectively, \emph{quasi-free}) as a right $\Q$-module.\end{definition}
The category of \emph{pointed} $\cP$-$\Q$-\emph{bimodules} is equivalent to the category of $\cP$-$\Q$-bimodules $M$ which have a natural homomorphism of bimodules $F_{\cP}(\Q)\lra M$. Moreover, the category of pointed $\cP$-$\Q$ bimodules admits a cofibrantly generated model structure~\cite[Chapter 14]{FresseBook}. 

\begin{remark}\label{pointedisreduced}In Fresse~\cite[Chapter 14]{FresseBook} the model structure on $\cP$-$\Q$-bimodules depends on the multicategory $\Q$ being \emph{reduced}. We claim this is equivalent to the condition of our bimodules being \emph{pointed}. Notice that endomorphism-operads are not reduced, since the zero operations $\End(A)(-;A(x))= A(x)$. However, any object $A$ \emph{under} $\unit$ defines a \emph{reduced} endomorphism multicategory $\widetilde{\End}(A)$. If $\cP$ is reduced, a $\cP$-algebra structure on $A$ is also equivalent to a base point $\unit\rightarrow A$ together with an operad map $P\rightarrow\widetilde{\End}(A)$.\end{remark}

\begin{corollary}The simplicial category $[\underline{\Hom}]_{1}(\cP,\M_{\Q})$ is simplicially Quillen equivalent to the category of pointed $\cP$-$\Q$-bimodules ${}_{\cP}\M_{\Q}$.\end{corollary}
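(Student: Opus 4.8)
The plan is to realize the asserted equivalence as a model structure transported along an equivalence of the underlying categories, and then to note compatibility with the simplicial enrichment. The starting point is the identification recorded just before Corollary~\ref{bicomplete}: a $\cP$-algebra structure on a right $\Q$-module $M$, i.e.\ a multifunctor $\cP\to\End_{\Q}(M)$, is precisely a $\cP$-$\Q$-bimodule structure on $M$, and this correspondence is natural in $M$. This produces a functor $\Phi$ from $[\underline{\Hom}]_{1}(\cP,\M_{\Q})$ to the category of $\cP$-$\Q$-bimodules which is the identity on the underlying $S$-indexed family in $\M_{\Q}^{S}$ and a bijection on the structure maps lying over each fixed underlying object; in particular $\Phi$ is fully faithful and injective on objects. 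To land in the category ${}_{\cP}\M_{\Q}$ of \emph{pointed} bimodules I would invoke Remark~\ref{pointedisreduced}: a pointed $\cP$-$\Q$-bimodule is the datum of a basepoint $\unit\circ\Q\to M$ together with a \emph{reduced} bimodule structure, and the matching decomposition of the set of $\cP$-algebra structures shows that $\Phi$ restricts to an equivalence onto ${}_{\cP}\M_{\Q}$ — an isomorphism of categories when $\cP$ is reduced, and otherwise after absorbing the $0$-ary operations of $\cP$ into the basepoint.

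Next I would show that $\Phi$, together with its inverse, is a Quillen equivalence — in fact a simplicial one. The Model Structure theorem of this section equips $[\underline{\Hom}]_{1}(\cP,\M_{\Q})$ with a model structure by creating weak equivalences and fibrations along the forgetful functor to right $\Q$-modules (equivalently, to $\M_{\Q}^{S}$), with generating (acyclic) cofibrations the free objects on $\iota_{*}I$ and $\iota_{*}J$; the model structure on ${}_{\cP}\M_{\Q}$ from~\cite[Chapter~14]{FresseBook} is created along the analogous forgetful functor. Since $\Phi$ is the identity on underlying right $\Q$-modules it strictly commutes with these two forgetful functors and with their left adjoints, so it preserves and reflects weak equivalences and fibrations and matches up generating (acyclic) cofibrations; being an equivalence of categories that carries one model structure to the other, $\Phi$ is a Quillen equivalence, its derived unit and counit being isomorphisms in the homotopy categories. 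For the simplicial statement, $\M_{\Q}$ and $\M_{\Q}^{S}$ are tensored and cotensored over $\sSets$ coordinatewise, and both model structures above are simplicial with $K\otimes M$ and $M^{K}$ computed on underlying right $\Q$-modules; as $\Phi$ is the identity there, it is a simplicial functor, so the equivalence is a simplicial Quillen equivalence.

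The hard part will be the bookkeeping that lets the pointed bimodules of~\cite[Chapter~14]{FresseBook}, where $\Q$ is taken to be reduced, stand in for the $\cP$-algebras in $\M_{\Q}$, which are governed by the endomorphism multicategories $\End_{\Q}(M)$ — and these are never reduced, their $0$-ary part being $M$ itself. Making Remark~\ref{pointedisreduced} precise, i.e.\ proving that the splitting ``algebra structure $=$ basepoint $+$ reduced structure'' is natural in $M$, compatible with the forgetful functors to $\M_{\Q}^{S}$, and preserved by the simplicial tensors, is the substantive step; once it is in place, what remains is the routine verification that two model structures cut out by the same creation-and-lifting recipe coincide. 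One must also carry the object set $S$ along throughout the many-object version, but since every construction here is done colour-wise through the functors $\iota_{x}$ and $Ev_{x}$, this introduces no essential difficulty.
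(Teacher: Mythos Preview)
Your proposal is correct and follows the same route the paper takes: identify $\cP$-algebras in $\M_{\Q}$ with $\cP$-$\Q$-bimodules via the endomorphism operad, then observe that both model structures are created by the same forgetful functor to $\M_{\Q}^{S}$, so the equivalence of categories is automatically a simplicial Quillen equivalence. The paper itself defers this to a single sentence in the appendix (``It is now easy to check\ldots''), so your write-up is already considerably more detailed than the original; your flagging of the pointed/reduced bookkeeping from Remark~\ref{pointedisreduced} as the one place requiring care is exactly right.
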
 

\section{Morita Theory}\label{Moritatheory}We will denote the category of right quasi-free $\cP$-$\Q$-bimodules by ${}_{\cP}\M^*_{\Q}.$ The model structure in the previous section clearly induces a model structure on${}_{\cP}\M^*_{\Q}.$ This section is devoted to showing that the derived mapping space $\Map^{h}(\cP,\Q)$ is weakly homotopy equivalent to the moduli space of quasi-free $\cP$-$\Q$-bimodules. 

\subsection{Extension and Restriction of Scalars}\label{extensionandrestrictionofscalars} Given a multicategory $\mathcal{P}$ with $\obj(\cP):=S$ and a map of sets, $F_0:T\longrightarrow S$, We can construct a multicategory $F^{*}(\mathbb{\mathcal{P}})$ with object set $T$, with operations given by\begin{equation*}F^{*}(\mathcal{P})(d_{1},\cdots,d_{n};d):=\mathcal{P}(Fd_{1},...,Fd_{n};Fd).\end{equation*} It follows that given a multifunctor $\psi:\R\lra\mathcal{S}$, we can show that $\R$ operates on any right $\mathcal{S}$-module $N$ through the morphism $\psi:\R\lra\mathcal{S}.$ This $\cR$-action defines a right $\R$-module $\psi^*N$ associated to $N$ by restriction. This structure has a natural left adjoint, which defines a right $\mathcal{S}$-module $\psi_*M$ by extension. The following results easily generalize from operads to general multicategories. 


\begin{prop}\label{extensionrestrictionrightmodules}Let $\psi:\R\rightarrow\mathcal{S}$ be a multifunctor. The extension-restriction functors$$\psi_*:\M_{\R}\longrightarrow\M_{\mathcal{S}}:\psi^*$$are functors of symmetric monoidal categories over $\C$ and define an adjunction relation in the $2$-category of symmetric monoidal categories over $\C$.\end{prop}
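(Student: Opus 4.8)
The plan is to verify this by unwinding the definitions, since the heart of the statement is a compatibility between the circle product (the monoidal structure on modules) and the extension/restriction functors. First I would recall that for a multifunctor $\psi:\R\to\mathcal{S}$, restriction $\psi^*N$ of a right $\mathcal{S}$-module $N$ is defined by letting $\R$ act on $N$ through $\psi$; concretely $\psi^*N$ has the same underlying collection as $N$ and the action map $N\circ\R\to N\circ\mathcal{S}\to N$ uses $\psi$ on the right factor. Extension $\psi_*M$ is then defined as the relative circle product $M\circ_{\R}\mathcal{S}$, i.e.\ the coequalizer of the two natural maps $M\circ\R\circ\mathcal{S}\rightrightarrows M\circ\mathcal{S}$, one using the right $\R$-action on $M$ and the other using $\psi$ together with the composition in $\mathcal{S}$. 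The adjunction $\psi_*\dashv\psi^*$ is then formal from the universal property of this coequalizer, exactly as in the operad case (cf.\ Fresse), so I would only sketch it.

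The substantive content is that both $\psi_*$ and $\psi^*$ are symmetric monoidal over $\C$. For $\psi^*$ this is essentially immediate: the monoidal product on $\M_{\R}$ and on $\M_{\mathcal{S}}$ is computed on underlying collections in the same way (degreewise tensor over $\C$ with the diagonal symmetric-group action), and the right action on a tensor product $M\otimes M'$ is the "diagonal" action; since $\psi^*$ does not change underlying collections and only pulls back the action along $\psi$, the canonical map $\psi^*M\otimes\psi^*M'\to\psi^*(M\otimes M')$ is an isomorphism, compatibly with the unit $\mathbf{1}$ and with associativity and symmetry constraints. So $\psi^*$ is a strong symmetric monoidal functor over $\C$, and in particular lax monoidal. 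For $\psi_*$, I would instead argue that it is \emph{op}lax/strong monoidal as the left adjoint of a strong monoidal functor: there is a canonical natural transformation $\psi_*M\otimes\psi_*M'\to\psi_*(M\otimes M')$ obtained by adjunction from $M\otimes M'\to\psi^*\psi_*M\otimes\psi^*\psi_*M'\cong\psi^*(\psi_*M\otimes\psi_*M')$ using the unit of the adjunction and the monoidal structure on $\psi^*$, together with $\mathbf 1_{\mathcal S}\to\psi_*\mathbf 1_{\R}$. Checking that these maps satisfy the coherence axioms is routine and I would cite the general principle (doctrinal adjunction / the fact that a left adjoint to a strong monoidal functor is canonically oplax, see Kelly) rather than writing it out. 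The enrichment over $\C$ is similarly inherited: all the constructions above are built from $\C$-tensors and preserve them, so the structure maps are $\C$-natural.

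Finally, to phrase this as an adjunction \emph{in the $2$-category of symmetric monoidal categories over $\C$}, I would observe that $\psi^*$ strong monoidal and $\psi_*$ oplax monoidal is precisely the data of a monoidal adjunction in the relevant sense, and that the unit and counit of $\psi_*\dashv\psi^*$ are monoidal natural transformations — this again follows formally once the structure maps are identified as adjunct to each other. The main obstacle I anticipate is bookkeeping: making the diagonal $\Sigma_n$-actions and the relative circle product $M\circ_{\R}\mathcal S$ explicit enough that the isomorphism $\psi^*M\otimes\psi^*M'\cong\psi^*(M\otimes M')$ and the coherence of the $\psi_*$ structure maps are genuinely verified rather than asserted; but since none of this differs from the single-object (operad) case treated in the cited literature, the cleanest route is to reduce to those results by noting the object set plays no role in the monoidal structure on modules, and then invoke them.
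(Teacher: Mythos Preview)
Your proposal is correct and, in fact, considerably more detailed than what the paper itself provides: the paper states this proposition without proof, prefacing it only with the remark that ``the following results easily generalize from operads to general multicategories,'' and implicitly defers to Fresse's treatment of the single-object case. Your closing observation---that the object set plays no role in the monoidal structure on right modules, so one can reduce to the operad case in the cited literature---is exactly the paper's strategy, and the intermediate unwinding you give (strong monoidality of $\psi^*$, oplax structure on $\psi_*$ via doctrinal adjunction) is a reasonable way to make that reduction explicit.
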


Given another multifunctor $\phi:\cP\rightarrow\Q$, we have extension and restriction functors on algebra categories\begin{equation*}\phi_*:{\Alg}_{\cP}(\M)\rightleftarrows{\Alg}_{\Q}(\M):\phi^*.\end{equation*}By definition, the restriction functor $\phi^*:\Alg_{\Q}(\M)\rightarrow\Alg_{\cP}(\M)$ reduces to the identity functor $\phi^*(B) = B$ if we forget operad actions. 

\begin{lemma} The extension functor $\phi^*$ preserves weak equivalences and fibrations.\end{lemma}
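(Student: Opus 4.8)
The functor called $\phi^{*}$ in the statement is the restriction-of-scalars functor $\phi^{*}\colon\Alg_{\Q}(\M)\to\Alg_{\cP}(\M)$ (the one which reduces to the identity after forgetting operad actions); the plan is to prove it preserves weak equivalences and fibrations by factoring the verification through the forgetful functors to the underlying product categories, where both classes of maps are detected coordinatewise.

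\emph{Step 1: the model structures are created by a forgetful functor.} Put $S:=\obj(\cP)$ and $T:=\obj(\Q)$, and write $V_{\cP}\colon\Alg_{\cP}(\M)\to\M^{S}$ and $V_{\Q}\colon\Alg_{\Q}(\M)\to\M^{T}$ for the forgetful functors, with $\M^{S}$ and $\M^{T}$ carrying the coordinatewise model structures. By the Model Structure theorem (and the discussion of $\M_{\Q}^{S}$ preceding it), a map $g$ in $\Alg_{\cP}(\M)$ is a weak equivalence (respectively, fibration) if and only if $V_{\cP}g$ is one in $\M^{S}$, and similarly for $\Q$. In particular $V_{\cP}$ both preserves and reflects these two classes.

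\emph{Step 2: $\phi^{*}$ lies over relabelling.} Let $\phi_{0}\colon S\to T$ be the underlying set map of $\phi$. Unwinding the definition of restriction of scalars, the underlying object of $\phi^{*}B$ is $B\circ\phi_{0}$, i.e.\ $(\phi^{*}B)_{x}=B_{\phi_{0}(x)}$ for $x\in S$, with $\cP$-action obtained by precomposing the $\Q$-action with $\phi$. Hence the square
$$
\xymatrix{
\Alg_{\Q}(\M)\ar[r]^-{\phi^{*}}\ar[d]_-{V_{\Q}} & \Alg_{\cP}(\M)\ar[d]^-{V_{\cP}}\\
\M^{T}\ar[r]_-{\phi_{0}^{*}} & \M^{S}
}
$$
commutes, where $\phi_{0}^{*}(A)=A\circ\phi_{0}$ is the relabelling functor. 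This functor preserves weak equivalences and fibrations: if $f$ is a coordinatewise weak equivalence (respectively, fibration) in $\M^{T}$, then $(\phi_{0}^{*}f)_{x}=f_{\phi_{0}(x)}$ is a weak equivalence (respectively, fibration) in $\M$ for every $x\in S$, so $\phi_{0}^{*}f$ is again one.

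\emph{Step 3: conclude.} Take $g$ a weak equivalence (respectively, fibration) in $\Alg_{\Q}(\M)$. By Step 1, $V_{\Q}g$ is one; applying the last observation of Step 2 to $f=V_{\Q}g$ and using commutativity of the square, $V_{\cP}\phi^{*}g=\phi_{0}^{*}V_{\Q}g$ is one; by the reflecting property of $V_{\cP}$ from Step 1, $\phi^{*}g$ is a weak equivalence (respectively, fibration) in $\Alg_{\cP}(\M)$, as desired. The only step carrying real content is Step 1 --- confirming that the model structure of the Model Structure theorem genuinely creates weak equivalences and fibrations of $\cP$-algebras along the forgetful functor to the product of copies of $\M$ (equivalently, to right modules), so that $V_{\cP}$ reflects these classes; once that is in hand, Steps 2 and 3 are pure bookkeeping, since the relevant classes in a product model category are defined coordinatewise and relabelling only permutes and repeats coordinates.
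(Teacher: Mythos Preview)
Your proof is correct. The paper does not actually supply a proof of this lemma; it is stated and used immediately, relying on the sentence just before it that ``the restriction functor $\phi^*:\Alg_{\Q}(\M)\rightarrow\Alg_{\cP}(\M)$ reduces to the identity functor $\phi^*(B) = B$ if we forget operad actions.'' Your argument makes explicit exactly what that sentence is meant to imply: since the model structures on algebra categories are transferred along the forgetful functors (so weak equivalences and fibrations are created by $V_{\cP}$ and $V_{\Q}$), and since $\phi^{*}$ covers the coordinate-relabelling functor $\phi_{0}^{*}$ which manifestly preserves coordinatewise weak equivalences and fibrations, the result follows. One small remark: the paper's phrase ``reduces to the identity'' is literally accurate only when $\phi_{0}$ is the identity on object sets; your treatment via $\phi_{0}^{*}$ is the correct general formulation and subsumes that case.
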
 

In particular, the extension functor $\phi^*$ is the right adjoint in a Quillen adjunction. We delay the proof of the following proposition to the appendix. 

\begin{prop}\label{extensionrestrictionofalgebras}Let $\phi:\cP\rightarrow\Q$ be a multifunctor between two admissible, $\Sigma$-cofibrant simplicial multicategories, and let $\M$ be a left proper, cofibrantly generated monoidal model category over $\C$. Them \begin{equation*}\phi_*: \Alg_{\cP}(\M)\rightleftarrows\Alg_{\Q}(\M):\phi^* \end{equation*}defines a Quillen adjunction. Furthermore, if $\phi:\cP\rightarrow\Q$ is a weak-equivalence in $\multi(\C)$, then $(\phi_*,\phi^*)$ defines a Quillen equivalence.\end{prop}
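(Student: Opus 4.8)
The plan is to establish the Quillen adjunction first, then upgrade to a Quillen equivalence when $\phi$ is a weak equivalence, using the resolutions already set up in the previous sections. For the adjunction: the preceding lemma tells us that $\phi^*$ preserves fibrations and weak equivalences (recall that $\phi^*$ is literally the identity on underlying objects of $\M$, and fibrations and weak equivalences in $\Alg_\cP(\M)$ are detected on the underlying objects by the Model Structure theorem), so $\phi^*$ preserves acyclic fibrations as well; since $\phi^*$ is the right adjoint in the pair $(\phi_*,\phi^*)$, this is exactly the condition for $(\phi_*,\phi^*)$ to be a Quillen pair. This part is essentially formal given what has been proved.

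For the Quillen equivalence when $\phi:\cP\we\Q$ is a weak equivalence, I would use the criterion for Quillen equivalence stated in the introduction: for $A\in\Alg_\cP(\M)$ cofibrant and $B\in\Alg_\Q(\M)$ fibrant, a map $\phi_*A\ra B$ is a weak equivalence iff its adjoint $A\ra\phi^*B$ is. Since weak equivalences in both algebra categories are detected on underlying objects of $\M^S$ (resp.\ $\M^T$), and $\phi^*$ does not change underlying objects, the map $A\ra\phi^*B$ on underlying objects is the composite $A\ra\phi^*\phi_*A\ra\phi^*B$, so it suffices to show the unit $A\ra\phi^*\phi_*A$ is a weak equivalence on underlying objects for every cofibrant $\cP$-algebra $A$. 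The standard device here is to reduce to free algebras: the functor $A\mapsto\phi_*A$ commutes with colimits, and cofibrant $\cP$-algebras are built as retracts of cell complexes whose cells are attached along maps of free algebras $F_\cP(K)\ra F_\cP(L)$ with $K\ra L$ a (generating) cofibration in $\M^S$. One checks that $\phi_*F_\cP(A)\cong F_\Q(\phi_{0*}A)$, where $\phi_{0*}:\M^S\ra\M^T$ is extension along the map of object-sets, and then the unit map on a free algebra $F_\cP(A)$ becomes, in each arity, a map of the form $\coprod_n \cP(x_1,\dots,x_n;x)\otimes_{\Sigma_n}(\cdots)\ra\coprod_n\Q(\phi x_1,\dots,\phi x_n;\phi x)\otimes_{\Sigma_n}(\cdots)$, which is a weak equivalence because $\phi$ is a weak equivalence arity-by-arity (condition \textbf{W1}) together with essential surjectivity/equivalence on the linear parts (condition \textbf{W2}), and because everything in sight is $\Sigma$-cofibrant so that the $\Sigma_n$-homotopy orbits are homotopy-invariant. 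Finally one passes from free algebras to general cofibrant algebras by an induction over the cell attachments, using left properness of $\M$ to control the pushouts and the cube lemma / filtered-colimit argument to assemble the pieces; this is the standard Schwede--Shipley style argument (cf.\ the references to \cite{BM07} and \cite{FresseBook}).

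The main obstacle I expect is the arity-wise comparison on free algebras, specifically showing that $\phi_*F_\cP(A)\ra$ the free $\Q$-algebra is a weak equivalence on underlying objects. This requires not just the pointwise weak equivalences $\cP(x_1,\dots,x_n;x)\we\Q(\phi x_1,\dots,\phi x_n;\phi x)$ but control over how the extension functor reindexes the object-set colimit along $\phi_0:S\ra T$, together with the $\Sigma_n$-equivariance and $\Sigma$-cofibrancy hypotheses needed to know the $\otimes_{\Sigma_n}$ construction is homotopically well-behaved --- this is precisely where the admissibility and $\Sigma$-cofibrancy assumptions on $\cP$ and $\Q$, and the left properness of $\M$, get used. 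The passage from the object-set level to the full algebra level via the cellular filtration is routine but tedious, and I would organize it as an induction identical in shape to the proof that extension of scalars along a weak equivalence of operads is a Quillen equivalence in the single-object case, simply carrying the sets of objects along; since the proposition is deferred to the appendix anyway, the appendix proof can spell out this induction in full.
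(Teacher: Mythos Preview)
Your outline is correct and is precisely the standard argument one finds in the references the paper cites (Berger--Moerdijk \cite{BM07}, Fresse \cite{FresseBook}, Schwede--Shipley \cite{SS}). Note that although the paper announces ``We delay the proof of the following proposition to the appendix,'' the appendix in fact contains no explicit proof of this proposition; the appendix is devoted to the transferred model structure on $[\underline{\Hom}]_1(\cP,\M_\Q)$ and its cellular analysis, and the paper is effectively relying on the cited literature for the extension--restriction Quillen equivalence. So your proposal \emph{is} the intended proof: the Quillen adjunction is immediate from the preceding lemma, and the equivalence step proceeds by reducing to the unit map on cofibrant algebras, then to free algebras via the cell filtration, then to the arity-wise comparison where $\Sigma$-cofibrancy makes $(-)\otimes_{\Sigma_n}(-)$ homotopy invariant.

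One small point to tighten: you write $\phi_*F_\cP(A)\cong F_\Q(\phi_{0*}A)$ with $\phi_{0*}:\M^S\to\M^T$ the left Kan extension along $\phi_0$. This identification is correct when $\phi_0$ is bijective on objects (which, after using condition \textbf{W2} and the usual reduction to a fixed object set, one can arrange), but for a general multifunctor the interaction between the change-of-object-set functor and the free algebra functor is more delicate than a single isomorphism; the cleanest route is the one in \cite[Theorem 4.1]{BM07}, which first reduces to the case where $\phi$ is the identity on objects and then runs exactly the free-algebra/cell-induction argument you describe. With that reduction made explicit, your proof is complete.
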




In particular, extensions and restrictions on the left commute with extensions and restrictions on the right up to \emph{coherent functor isomorphisms}. 

\begin{prop}\label{changeofenrichingcat}Let $\cP$ be any well pointed, $\Sigma$-cofibrant multicategory enriched in $\C$. Given $\rho_*: \M\rightleftarrows\N: \rho^*$ a Quillen adjunction of monoidal model categories over $\C$. The functors \begin{equation*} \rho_*:\Alg_{\cP}(\M)\rightleftarrows\Alg_{\cP}(\N): \rho^*\end{equation*}induced by $\rho_*$ and $\rho^*$ define a Quillen adjunction. If $\rho_*:\M\rightleftarrows\N:\rho^*$ forms a Quillen equivalence, then \begin{equation*}\rho_*:\Alg_{\cP}(\M)\rightleftarrows\Alg_{\cP}(\N):\rho^*\end{equation*}forms a Quillen equivalence.\end{prop}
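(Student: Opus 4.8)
The plan is to transfer a Quillen adjunction / Quillen equivalence from the level of the enriching categories $\M$ and $\N$ up to the level of $\cP$-algebras, using the fact that algebra categories over a fixed operad are categories of algebras over a monad (Theorem on the monad $T$) together with the characterization of (acyclic) cofibrations in $\Alg_{\cP}(\M)$ as retracts of relative $\iota_*I$- (resp. $\iota_*J$-) complexes. First I would observe that $\rho^*:\N\rightarrow\M$ preserves fibrations and weak equivalences (being a right Quillen functor), and that the model structures on $\Alg_{\cP}(\M)$ and $\Alg_{\cP}(\N)$ are created by the forgetful functors to $\M^S$ and $\N^S$ respectively; since $\rho^*$ is applied coordinatewise on the product categories $\M^S$, $\N^S$, it visibly preserves coordinatewise fibrations and weak equivalences, hence the induced functor $\rho^*:\Alg_{\cP}(\N)\rightarrow\Alg_{\cP}(\M)$ preserves fibrations and weak equivalences. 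This already shows $(\rho_*,\rho_*)$ — meaning the induced pair — is a Quillen adjunction, provided we know the induced functors form an adjunction; that adjunction is formal once we check $\rho_*$ is (strong) monoidal, so that it carries a $\cP$-algebra to a $\cP$-algebra and is left adjoint to $\rho^*$ on algebras. (The monoidality of $\rho_*$ is part of the hypothesis ``Quillen adjunction of monoidal model categories over $\C$''; here I would cite the standard fact, cf. the discussion preceding Proposition~\ref{extensionrestrictionofalgebras}, that a (lax/strong) monoidal left adjoint induces an adjunction on categories of algebras over the same operad.)

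Next, for the Quillen equivalence statement, I would argue along the usual lines: given a cofibrant $\cP$-algebra $A\in\Alg_{\cP}(\M)$ and a fibrant $\cP$-algebra $B\in\Alg_{\cP}(\N)$, a map $\rho_*A\rightarrow B$ in $\Alg_{\cP}(\N)$ is a weak equivalence iff its underlying map in $\N^S$ is, and similarly its adjoint $A\rightarrow\rho^*B$ is a weak equivalence iff it is one underlying in $\M^S$. Since the forgetful functors commute with $\rho_*,\rho^*$ (by construction the algebra-level functors are just the $\M$-level functors equipped with extra structure), this reduces to: the adjunction $\rho_*:\M^S\rightleftarrows\N^S:\rho^*$ is a Quillen equivalence. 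But a product of Quillen equivalences indexed over $S$ is a Quillen equivalence, so it remains only to know $A$ is cofibrant in $\M^S$ when it is cofibrant in $\Alg_{\cP}(\M)$ — this is exactly where the hypotheses that $\cP$ is well-pointed and $\Sigma$-cofibrant enter, since then the free-algebra functor $F_{\cP}$ preserves cofibrations and the forgetful functor preserves cofibrant objects (this is the standard transfer lemma of Berger–Moerdijk, cf.~\cite{BM07}, and is the reason those hypotheses appear in Proposition~\ref{extensionrestrictionofalgebras} as well). Assembling these observations gives the Quillen equivalence.

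The main obstacle, and the step I would spend the most care on, is verifying that a cofibrant object of $\Alg_{\cP}(\M)$ has cofibrant underlying object in $\M^S$ — i.e. that the forgetful functor $\Alg_{\cP}(\M)\rightarrow\M^S$ preserves cofibrant objects. This is not automatic for a transferred model structure; it requires analysing pushouts of free algebra maps $F_{\cP}(\iota_x K)\rightarrow F_{\cP}(\iota_x L)$ along arbitrary maps, and using the $\Sigma$-cofibrancy of $\cP$ and well-pointedness to see that the relevant filtration on such a pushout is built out of cofibrations in $\M^S$ (the $\Sigma_n$-equivariant cofibrancy lets one pass $\Sigma_n$-quotients without losing cofibrancy). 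I would cite the precise form of this from~\cite{BM07} (or Fresse~\cite{FresseBook}) rather than reprove it, and note only that the monadic presentation from the Theorem on $T$ and the explicit description of $\iota_*I$, $\iota_*J$ given just before the Model Structure theorem are exactly what make that citation applicable in the many-object setting. Everything else — monoidality of $\rho_*$, preservation of fibrations and weak equivalences by $\rho^*$, the adjunction on algebras, and the product-of-Quillen-equivalences argument — is routine bookkeeping.
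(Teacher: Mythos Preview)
The paper does not actually supply a proof of this proposition; it is stated without argument, presumably as a standard result drawn from the literature (Fresse~\cite{FresseBook}, Berger--Moerdijk~\cite{BM07}). Your sketch is the standard argument one would give for such a result and is essentially correct: the model structures on $\Alg_{\cP}(\M)$ and $\Alg_{\cP}(\N)$ are created by the forgetful functors, so $\rho^*$ preserves fibrations and acyclic fibrations; the Quillen-equivalence step reduces to the underlying adjunction on $\M^S\rightleftarrows\N^S$, and the non-formal input is exactly the fact that cofibrant $\cP$-algebras forget to cofibrant objects when $\cP$ is $\Sigma$-cofibrant and well-pointed.

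Two small remarks. First, your claim that the forgetful functors commute with $\rho_*$ on the nose requires $\rho_*$ to be \emph{strong} symmetric monoidal (and compatible with the $\C$-tensoring), not merely lax; this is implicit in the hypothesis ``adjunction of monoidal model categories over $\C$'' as used in~\cite{FresseBook}, but you should say so explicitly rather than write ``lax/strong''. If $\rho_*$ were only lax monoidal, the induced left adjoint on algebras would not in general be $\rho_*$ applied objectwise, and the reduction to $\M^S\rightleftarrows\N^S$ would fail. Second, the forgetful-preserves-cofibrancy step you flag as the main obstacle is indeed the crux; the precise reference is~\cite[Prop.~4.3]{BM03} or~\cite[\S12]{FresseBook}, and invoking it is appropriate since the paper itself leans on those sources for the transferred model structure.
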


\subsection{Lifting Extension-Restriction of Scalars}Any multifunctor $\psi:\R\rightarrow\cS$ also induces a map $G:\cP+\R\lra\cP+\cS$ and a Quillen adjunction$$G_*:(\cP+\R)\searrow\multi(\C)\leftrightarrows(\cP+\cS)\searrow\multi(\C):G^*.$$The coproduct of operads preserves weak equivalences (see~\cite[4.3]{DH}~\cite{FZ}). Therefore, if $\psi$ is a weak equivalence which \emph{fixes objects}, then by~\cite[Prop. 2.5]{Rezk2}, we know that$$G_*:(\cP+\R)\searrow\multi(\C)\leftrightarrows(\cP+\cS)\searrow\multi(\C):G^*$$is a Quillen equivalence. This depends heavily on the fact that categories of simplicial multicategories with fixed sets of objects is a left proper model category (combine~\cite[4]{Rezk2} with~\cite[1.5]{BM07}).\

\begin{prop}\label{morita1}Let $\cP$ be an admissible, $\Sigma$-cofibrant multicatgory. Let $\psi:\R\rightarrow\mathcal{S}$ be a multifunctor between two locally cofibrant multicategories which fixes objects. If $\psi$ is a weak equivalence, then$$\mathbb{L}E(\psi):(\cP+\R)\searrow\multi(\C)\longrightarrow(\cP+\mathcal{S})\searrow\multi(\C)$$is a weak equivalence.\end{prop}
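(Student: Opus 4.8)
The plan is to deduce the statement from the Quillen equivalence recorded in the previous subsection together with the standard fact that a Quillen equivalence induces a weak homotopy equivalence of moduli spaces (hence, after Dwyer--Kan, of the associated derived mapping objects). First I would unwind the notation: $\mathbb{L}E(\psi)$ denotes the left derived functor of the enveloping functor $E$ applied to the map of bimodules induced by $\psi$, landing in $(\cP+\R)\searrow\multi(\C)$ resp. $(\cP+\mathcal{S})\searrow\multi(\C)$; concretely, for a bimodule $M$ one sets $\mathbb{L}E(M)=E(M_c)$. The claim that $\mathbb{L}E(\psi)$ is ``a weak equivalence'' should be read as: the induced map on moduli spaces (nerves of the subcategories of weak equivalences) of the two over-categories is a weak homotopy equivalence.

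The key steps, in order, are as follows. (1) Since $\psi:\R\to\mathcal{S}$ fixes objects and is a weak equivalence between locally cofibrant (hence well-pointed, $\Sigma$-cofibrant under our standing hypotheses) multicategories, the coproduct of operads preserves such weak equivalences (cited from~\cite{DH},~\cite{FZ}), so the induced map $G:\cP+\R\to\cP+\mathcal{S}$ is again a weak equivalence which fixes objects. (2) Invoke the displayed Quillen equivalence from the preceding paragraph: because categories of simplicial multicategories with a fixed object set form a left proper model category (combining~\cite{Rezk2} with~\cite[1.5]{BM07}), Rezk's criterion~\cite[Prop.~2.5]{Rezk2} applies to the object-fixing weak equivalence $G$ and yields that $G_*:(\cP+\R)\searrow\multi(\C)\leftrightarrows(\cP+\mathcal{S})\searrow\multi(\C):G^*$ is a Quillen equivalence. (3) Identify $\mathbb{L}E(\psi)$ with (a model for) the left derived functor $\mathbb{L}G_*$: by Lemma~\ref{enveloping} the enveloping functor $E$ is the left adjoint realizing the bimodule description of these over-categories, it is left Quillen (it is left adjoint to a functor preserving fibrations and weak equivalences), and the square relating $E$ over $\R$, $E$ over $\mathcal{S}$, and the extension-of-scalars functor on bimodules commutes up to the coherent isomorphisms of Proposition~\ref{extensionrestrictionrightmodules}; hence on cofibrant objects $E$ intertwines $\psi_*$ on bimodules with $G_*$ on over-categories. (4) A left Quillen equivalence induces a weak homotopy equivalence of moduli spaces: its total left derived functor is part of an adjoint equivalence of homotopy categories, and, as recalled in the Notation section, a right adjoint in a Quillen pair induces a weak homotopy equivalence on nerves of moduli categories; applying this to both $G_*$ and its homotopy inverse, and using two-out-of-three on the nerve maps, gives that $wG_*$ (equivalently $\mathbb{L}E(\psi)$ on moduli spaces) is a weak homotopy equivalence. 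Conclude.

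I expect the main obstacle to be step (3): carefully matching $\mathbb{L}E(\psi)$ — defined via the bimodule-level cofibrant resolution — with the abstract derived functor $\mathbb{L}G_*$ of the Quillen equivalence, rather than with some a priori different functor. This requires checking that $E$ sends a cofibrant $\cP$-$\mathcal{R}$-bimodule resolving $M$ to a cofibrant object of $(\cP+\R)\searrow\multi(\C)$ (which follows since $E$ is left Quillen, using the model structure of the previous section) and that the comparison of the two functors on the nose uses only the coherent functor isomorphisms for extension-restriction of scalars on the left versus the right, already asserted in the discussion around Proposition~\ref{changeofenrichingcat}. The remaining steps are formal consequences of the cited model-categorical machinery; in particular step (1) and step (2) are essentially quotations of~\cite{DH},~\cite{FZ} and~\cite{Rezk2}, and step (4) is the standard moduli-space argument sketched in the Notation and Conventions subsection.
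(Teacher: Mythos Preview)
Your proposal is correct and follows essentially the same route as the paper: both arguments hinge on (i) the Quillen equivalence $(G_*,G^*)$ on the under-categories induced by the object-fixing weak equivalence $G=\id+\psi$, (ii) the compatibility $E\circ\psi_*\cong G_*\circ E$ (your step~(3), which the paper invokes as ``the adjoint functor theorem implies $G_*\mathbb{L}E(B)=E\psi_*(A)$''), and (iii) the fact that $E$ is left Quillen. The only difference is packaging: the paper argues objectwise---showing that for each bimodule $B$ the object $\mathbb{L}E(B)$ is carried by $G_*$ to something weakly equivalent to $\mathbb{L}E$ of the extended bimodule---whereas you phrase the conclusion globally as a weak equivalence of moduli spaces; the objectwise statement is what is actually used downstream (in the proof of Theorem~\ref{MoritaTheorem}), and it drops out of your step~(3) directly without needing the full moduli-space argument of step~(4).
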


\begin{proof}Let $(\psi_{*},\psi^*)$ and $(G_*,G^*)$ be the Quillen equivalences above induced by $\psi.$ We want to show that given $B$ in ${}_{\cP}\M_{\R}$, $\mathbb{L}E(B)$ is weakly equivalent to $\mathbb{L}E(A)$ for some $A$ in ${}_{\cP}\M_{\cS}$. If $A$ is cofibrant, then $\psi_{*}(A)$ is a cofibrant object weakly equivalent to $B$. Since $E$ is left adjoint to a functor that preserves fibrations and weak equivalences, we know that $E$ preserves cofibrant objects and weak equivalences between cofibrant objects, and thus that $E\psi_*(A)$ is weakly equivalent to $\mathbb{L}E(B)=E(B_{c})$. The adjoint functor theorem implies that $G_*\mathbb{L}E(B)=E\psi_*(A)$. Since $G_*$ is the left adjoint of a Quillen equivalence, $G_*$ takes weak equivalences between cofibrant objects to weak equivalences, and we are done.\end{proof} 

\begin{lemma}\label{morita2}Let $\cP$ be an admissible, $\Sigma$-cofibrant simplicial multicategory and let $\Q$ be locally cofibrant. Let $f:M\ra N$ be a homomorphism between cofibrant pointed $\cP$-$\Q$-bimodules. Assume that $\mathbb{L}E(M)$ is an object of $(\cP + \Q)\searrow\multi(\C)$ so that the natural map $\Q\ra(\cP +\Q)\ra \mathbb{L}E(M)$ is a weak equivalence. Then if $f$ is a weak equivalence, the natural map $\Q\ra(\cP +\Q)\ra \mathbb{L}E(N)$ is also a weak equivalence.\end{lemma}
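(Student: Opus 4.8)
The plan is to reduce the claim about $N$ to the already-established claim about $M$ by using that $f\colon M\to N$ is a weak equivalence between cofibrant pointed $\cP$-$\Q$-bimodules, together with the fact that $\mathbb{L}E$ (equivalently $E$ on cofibrant objects) preserves weak equivalences between cofibrant objects. Concretely, since $M$ and $N$ are cofibrant, $\mathbb{L}E(M)=E(M)$ and $\mathbb{L}E(N)=E(N)$, and $E$ carries the weak equivalence $f$ to a weak equivalence $E(f)\colon E(M)\to E(N)$ in $(\cP+\Q)\searrow\multi(\C)$ (using that $E$ is the left adjoint of a functor preserving fibrations and weak equivalences, exactly as in the proof of Proposition~\ref{morita1}).

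First I would check that $E(f)$ is compatible with the canonical maps from $\cP+\Q$: by construction of the enveloping functor $E$, both $E(M)$ and $E(N)$ are objects \emph{under} $\cP+\Q$ (they receive the structure map from the basepoint $F_{\cP}(\Q)$), and $E(f)$ is a morphism in the under-category $(\cP+\Q)\searrow\multi(\C)$, so it commutes with the structure maps $\cP+\Q\to E(M)$ and $\cP+\Q\to E(N)$. Next I would restrict attention to the composite $\Q\to(\cP+\Q)\to\mathbb{L}E(M)$ and the analogous composite for $N$; these fit into a commuting triangle with $E(f)$ along the bottom.

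Then I would invoke the two-out-of-three property in $\multi(\C)$: in the commutative triangle
$$\xymatrix{
& \Q \ar[dl] \ar[dr] & \\
\mathbb{L}E(M) \ar[rr]^-{E(f)} & & \mathbb{L}E(N),
}$$
the left-hand map is a weak equivalence by hypothesis and $E(f)$ is a weak equivalence by the paragraph above, so the right-hand map $\Q\to\mathbb{L}E(N)$ is a weak equivalence as well. This is precisely the assertion that the natural map $\Q\to(\cP+\Q)\to\mathbb{L}E(N)$ is a weak equivalence.

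The main obstacle I anticipate is verifying carefully that $E$ sends the weak equivalence $f$ between cofibrant \emph{pointed} bimodules to a weak equivalence of multicategories respecting the under-$(\cP+\Q)$ structure — i.e., that the adjunction $U\colon\cP+\Q\searrow\multi(\C)\leftrightarrows{}_{\cP}\M_{\Q}\colon E$ is genuinely a Quillen adjunction on these pointed categories (so that $E$ is left Quillen and hence preserves weak equivalences between cofibrant objects). This has effectively been recorded already: $U$ preserves fibrations and weak equivalences (it is built from the forgetful functors of Lemma~\ref{enveloping} extended as in the discussion following it), so $E$ is left Quillen, and cofibrancy of $M$ and $N$ as pointed $\cP$-$\Q$-bimodules is exactly the hypothesis we need. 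Once this is in hand the rest is the formal two-out-of-three argument above.
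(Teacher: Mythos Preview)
Your argument is correct and is precisely the reasoning the paper has in mind: the lemma is stated without a separate proof because it follows immediately from the fact, recorded just after Lemma~\ref{enveloping}, that $E$ is left adjoint to a functor preserving fibrations and weak equivalences and hence preserves weak equivalences between cofibrant objects, together with two-out-of-three. Your write-up simply makes this explicit.
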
 

\begin{prop}\label{mortia3}Let $F:\cP\lra\Q$ be a multifunctor, and let $\cP$ be admissible $\Sigma$-cofibrant and $\Q$ be locally cofibrant. Then $F$ equips $\cP$ with the structure of a right quasi-free $\cP$-$\Q$-bimodule by restriction on the right. Moreover, the induced operad $\mathbb{L}E(F)$ is an object of $(\cP+\Q)\searrow\multi(\C)$ so that the map $\Q\ra (\cP +\Q)\ra \mathbb{L}E(F)$ is a weak equivalence.\end{prop}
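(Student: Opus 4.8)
The plan is to imitate the proof of Proposition~\ref{Hochschildgood} --- which is the special case $F=\id_{\cP}$, in which the bar complex below is the Hochschild resolution --- the only genuinely new ingredient being the computation of the enveloping functor $E$ on the bimodule $\cP$ regarded as a right $\Q$-module through $F$, which is the exact analogue of Example~\ref{PEndP}. First I would fix the bimodule: the multifunctor $F$ makes $\cP$ into the $\cP$-$\Q$-bimodule $M_F$ whose underlying right $\Q$-module is the free rank-one module $\Q$ and whose left $\cP$-action is obtained from the left $\Q$-action on $\Q$ by restriction along $F$; this is the bimodule $F_*(\cP)$, i.e.\ the ``$Y$ determined by $F$'' of Corollary~\ref{Barcomplexcylinder}, and it is pointed via $F_{\cP}(\Q)\to M_F$. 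Since its underlying right $\Q$-module is $\Q$, the Example computing $\End_{\Q}(\Q)\cong\Q$ shows that the canonical map $\Q\to\End_{\Q}(M_F)\cong\Q$ is the identity, in particular a weak equivalence, so $M_F$ is right quasi-free. The hypotheses that $\cP$ be admissible and $\Sigma$-cofibrant and that $\Q$ be locally cofibrant are exactly what is needed below to invoke Corollary~\ref{Barcomplexcylinder} and to have $\mathbb{L}E$ well behaved.

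Next I would compute $\mathbb{L}E(F):=\mathbb{L}E(M_F)$ by the bar resolution. By Corollary~\ref{Barcomplexcylinder}, $Q:=\textrm{diag}B(\cP,\cP,M_F)$ is cofibrant as a pointed $\cP$-$\Q$-bimodule and the augmentation $\eta:Q\we M_F$ is a weak equivalence, so $\mathbb{L}E(F)=E(Q)$. Arguing as in the paragraphs preceding Proposition~\ref{Hochschildgood}, with $H_\bullet\cP$ replaced by $B_\bullet(\cP,\cP,M_F)$: each level $B_n(\cP,\cP,M_F)$ is a pointed bimodule, so $E$ prolongs to a simplicial object $E(B_\bullet(\cP,\cP,M_F))$ in $(\cP + \Q)\searrow\multi(\C)$; since the diagonal commutes with $E$, the multicategory $\mathbb{L}E(F)=E(Q)=\textrm{diag}E(B_\bullet(\cP,\cP,M_F))$ is an object of $(\cP + \Q)\searrow\multi(\C)$, and its augmentation to $E(M_F)$ factors the structure map $\cP+\Q\to E(M_F)$. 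Moreover, exactly as in Example~\ref{PEndP} --- where the identity $\End_{\cP}\cong\cP$ forces $E$ of the diagonal $\cP$-$\cP$-bimodule to be $\cP$ equipped with the fold map --- the identity $\End_{\Q}(\Q)\cong\Q$ forces $E(M_F)\cong\Q$, with structure map $\cP+\Q\to\Q$ equal to $F$ on the first summand and $\id_{\Q}$ on the second; this is checked on generators and relations as in Lemma~\ref{enveloping}. In particular the composite $\Q\to\cP+\Q\to E(M_F)$ is the identity of $\Q$.

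It then remains to see that the augmentation $\mathbb{L}E(F)=E(Q)\to E(M_F)\cong\Q$ is a weak equivalence. The augmented simplicial bimodule $B_\bullet(\cP,\cP,M_F)\to M_F$ carries an extra degeneracy (insert the unit of $\cP$ into the leftmost slot), so it is a simplicial homotopy equivalence onto the constant simplicial bimodule $M_F$; the functor $E$ preserves simplicial homotopy equivalences, so $E(B_\bullet(\cP,\cP,M_F))\to cE(M_F)$ is one in $(\cP + \Q)\searrow\multi(\C)$, and passing to diagonals (which again preserves simplicial homotopy equivalences) shows $\mathbb{L}E(F)\to E(M_F)\cong\Q$ is a weak equivalence of multicategories: on every operation object and on the underlying categories $[-]_{1}$ it is a simplicial homotopy equivalence, hence satisfies conditions W1 and W2. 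Since the composite $\Q\to(\cP+\Q)\to\mathbb{L}E(F)\we E(M_F)\cong\Q$ is the identity, the two-out-of-three property gives that $\Q\to(\cP+\Q)\to\mathbb{L}E(F)$ is a weak equivalence, as claimed. (This proposition then serves as the base case from which Lemma~\ref{morita2} propagates the same conclusion along weak equivalences of cofibrant bimodules.)

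I expect the main obstacle to be the enriched-categorical bookkeeping: verifying that $E$ sends the bar-complex extra degeneracy to a bona fide simplicial homotopy equivalence of simplicial objects of $(\cP + \Q)\searrow\multi(\C)$ and that the diagonal then produces a map satisfying W1 and W2, together with the identification $E(M_F)\cong\Q$ with the asserted structure map --- the many-objects analogue of Example~\ref{PEndP} --- which has to be checked on generators and relations.
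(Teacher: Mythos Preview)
Your outline matches the paper's closely: you use the same resolution $F_*(\textrm{diag}\,H_*\cP)=\textrm{diag}\,B(\cP,\cP,M_F)$ as cofibrant replacement, identify $E(M_F)\cong\Q$ with structure map $(F,\id_{\Q})$, and conclude by two-out-of-three. The one substantive difference is in how you show the augmentation $E(Q)\to E(M_F)\cong\Q$ is a weak equivalence, and here your argument has a real gap (which you partly anticipate). The extra degeneracy $s_{-1}$ obtained by inserting the unit of $\cP$ into the leftmost slot is \emph{not} a morphism of pointed $\cP$-$\Q$-bimodules: the left $\cP$-action on $B_n=\cP\circ\cP^{\circ n}\circ M_F$ is by multiplication into that leftmost factor, and $s_{-1}$ does not intertwine it. Consequently you cannot apply $E$ to the contracting homotopy; the assertion ``$E$ preserves simplicial homotopy equivalences'' is empty here because the homotopy does not live in the source category of $E$.

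The paper sidesteps this by reducing to the case $F=\id_{\cP}$, which is exactly Proposition~\ref{Hochschildgood}: there the explicit identification of $E(H_n\cP)$ as the free multicategory on $U(\cP^{\circ n})$ under $\cP+\cP$ lets one build the contraction directly at the level of multicategories, yielding the cylinder factorization $\cP+\cP\cof E(\cP_c)\we\cP$. For general $F$ the paper uses the commutation $EF_*=G_*E$ with $G=\id+F:\cP+\cP\to\cP+\Q$ (both sides left adjoint to visibly equal right adjoints), so that $\mathbb{L}E(F)=EF_*(\cP_c)=G_*E(\cP_c)$; pushing the cylinder factorization forward along $G_*$ and checking that the composite $\Q\to\cP+\Q\to\mathbb{L}E(F)\to EF_*(\cP)\cong\Q$ is the identity (via the underlying weak equivalence $j:F_*(\cP_c)\to F_*(\cP)$ of right $\Q$-modules) then gives the result. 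Your route can be repaired by describing each $E(B_n(\cP,\cP,M_F))$ explicitly as a free object under $\cP+\Q$ and constructing the simplicial contraction there, but that is precisely Proposition~\ref{Hochschildgood} transported along $G_*$, so invoking it directly is cleaner.
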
 

\begin{proof}The first part follows from the isomorphism $\End_{\cP}(\cP)\cong\cP$. Let$$F^*:\Alg_{\cP}(\M_{\Q})\longrightarrow\Alg_{\cP}(\M_{\cP})$$be the restriction functor of proposition~\ref{changeofenrichingcat}, and let $F_*$ denote the left adjoint. It follows from standard arguments (~\cite[Before Theorem 4.1]{BM07}) that $F_*$ commutes with free algebras. It follows that $F_*$ takes our basepoint $F_{\cP}(\cP)\rightarrow\cP$ to the required basepoint $F_{\cP}(\Q)\rightarrow\Q$.\

Let $G:\cP+\cP\longrightarrow\cP+\Q$ be the map $id +F$ and let $(G_*,G^*)$ denote the induced Quillen adjunction on multicategories under $\cP +\Q$. Let $\eta:\textrm{diag}(H_*\cP)\lra\cP$ be the cofibrant resolution of $\cP$ in the category of $\cP$-$\cP$-bimodules we described in~\ref{Hochschildgood}. For the remainder of this proof, denote this cofibrant bimodule by $\cP_c$.\

Notice that both $\cP_c$ and $\cP$ are cofibrant objects at the level of right $\cP$-modules. This implies that induced map $j:F_{*}(\cP_c)\rightarrow F_{*}(\cP)=\Q$ is a weak equivalence at the level of right $\Q$-modules. Now, by Lemma~\ref{morita2}, we know that we may assume that $F_{*}(\cP_c)$ and $\Q$ are also fibrant objects (taking the fibrant replacement which fixes objects). It then follows from Theorem~\ref{invariance} that the $\cP$-algebra structure maps are compatible with this weak equivalence in a homotopy coherent way. It now follows that $EF_*(\cP_c)=G_*E(\cP_c)$.\

Now, using the Hochschild resolution as a cofibrant replacement for $\cP$, we have shown that $\mathbb{L}E(\cP)$ is precisely $\cP$ under the fold map $\cP+\cP\longrightarrow \mathbb{L}E(\cP)=\cP$. \

It remains to check two things: \begin{enumerate}
\item that the object $E(\cP_c)$ is a cylinder object for $E(\cP)$ and
\item that the composite\begin{equation}\xymatrix{\Q\ar[r]^{in_2} &\cP+\Q \ar[r]^{i} & E(F_{*}\cP_c)  \ar[r]^{E(\eta)}& \mathbb{L}E(F_*\Q)}\end{equation} is a weak equivalence.\end{enumerate}

To prove $(1)$, we consider what $E(P_c)$ looks like. $H_*(\cP)$ is a simplicial object in $\cP$-$\cP$-bimodules, so $E(H_*(\cP))$ is applied degree-wise. This means that $E(H_*(\cP))$ is a simplicial object under $\cP+\cP$. If we now apply the diagonal, we can consider the augmentation map $E(\eta):diag (E(H_*(\cP))\lra E(\cP)=\cP$, which, being a morphism in $(\cP+\cP)\searrow\multi(\C)$, is a factorization of the fold map\begin{equation*}\xymatrix{\cP+\cP \ar[r]^{i} & diag E(H_*(\cP))\ar[r]^{E(\eta)} & E(\cP)=\cP }.\end{equation*}

We know that $\textrm{diag}E(\cP_c))$ is isomorphic to $E(\cP_c)$ as simplicial multicategories under $\cP+\cP$. Moreover, we know that $\textrm{diag} E(\cP_c)$ is a cofibrant multicategory under $\cP+\cP$ and that the augmentation $\textrm{diag}E(\cP_c)\longrightarrow E(\cP)=\cP$ is a weak equivalence. Since $\textrm{diag}E(\cP_c)$ is cofibrant, it follows that the map$$i:\cP+\cP\lra\textrm{diag} E(\cP_c)$$is a cofibration. Putting this all together, we have\begin{equation}\xymatrix{\cP+\cP \ar[r]^{i} & E(\cP_c) \ar[r]_{E(\eta)} & E(\cP)=\cP }\end{equation}with $E(\eta)$ a weak equivalence and $i$ a cofibration.\ 

Now to show $(2)$, notice that $(1)$ implies that the composite map\begin{equation}\xymatrix{\cP \ar[r]^{in_{2}} & \cP+\cP  \ar[r]^{i}& E(\cP_c)}\end{equation}is a weak equivalence of multicategories. This follows from the $2$-out-of-$3$ property and the observation that \begin{equation}\xymatrix{\cP \ar[r]^{in_{2}} & \cP+\cP  \ar[r]^{i}& E(\cP_c)\ar[r]_{E(\eta)} & E(\cP)=\cP}\end{equation}is the identity map.\  

We restrict along $F$ to get the the map $\beta:\Q\rightarrow E(F_*P_c)$ which is the composite\begin{equation}\xymatrix{\Q\ar[r]^{in_2} & \cP+\Q\ar[r]^{i} & E(F_*\cP_c)}.\end{equation}Consider the diagram\begin{equation}\xymatrix{\Q \ar[r]^{\beta} & EF_*(\cP_c)\ar[r]^{E(\eta)} \ar[d]^{U} &  EF_*(\cP) \ar[d]^{U} \\ &F_*(\cP_c)\ar[r]^{j} & F_*(\cP).}\end{equation}

It follows that $\beta$ is a weak equivalence, as we are looking at the identity map $\Q\lra F_*(\cP)=Q$ factored by $\beta$ and a weak equivalence. \end{proof} 

Putting together Lemma~\ref{morita2}, Proposition~\ref{morita1} and Proposition~\ref{morita3} we have now shown that right quasi-free $\cP$-$\Q$-bimodules get lifted to zig-zags $[\cP\lra M\bwe \Q]$.\

\begin{theorem}\label{MoritaTheorem}If $M$ is right quasi-free $\cP$-$\Q$-bimodule then the natural map $\Q\ra\cP +\Q\ra \mathbb{L}E(M)$ is a weak equivalence in $(\cP+\Q)\searrow\multi(\C)$.\end{theorem}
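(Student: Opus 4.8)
The plan is to reduce the general statement to the special case already established in Proposition~\ref{mortia3}, where the bimodule is of the form $\cP$ with right $\Q$-action restricted along a multifunctor $F:\cP\to\Q$. The key observation is that every right quasi-free $\cP$-$\Q$-bimodule $M$ is, by the very definition of quasi-free, equipped with a weak equivalence of right $\Q$-modules $\Q\to\End_\Q(M)$; combined with the $\cP$-$\Q$-bimodule structure (that is, the $\cP$-algebra structure on $M$ in $\M_\Q$, i.e.\ an operad map $\cP\to\End_\Q(M)$), this should let us compare $M$ with the bimodule associated to an honest multifunctor. So first I would take a cofibrant replacement $M_c\to M$ of $M$ as a pointed $\cP$-$\Q$-bimodule, noting that by Lemma~\ref{fibrant objects} we may also assume the relevant objects are fibrant with the fibrant replacement fixing objects; then $\mathbb{L}E(M)=E(M_c)$ by definition.

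Next, I would invoke Proposition~\ref{mortia3} to handle the model case: when $M=F_*(\cP)$ for a multifunctor $F:\cP\to\Q$, the map $\Q\to\cP+\Q\to\mathbb{L}E(F_*\cP)$ is a weak equivalence. The remaining work is to transport this conclusion along the weak equivalences relating a general right quasi-free $M$ to such a restricted module. This is precisely what Lemma~\ref{morita2} delivers: if $f:M\to N$ is a weak equivalence of cofibrant pointed $\cP$-$\Q$-bimodules and the conclusion (that $\Q\to\cP+\Q\to\mathbb{L}E(M)$ is a weak equivalence) holds for $M$, then it holds for $N$. Since $E$ preserves weak equivalences between cofibrant objects (being left adjoint to a functor preserving fibrations and weak equivalences, as noted after Lemma~\ref{enveloping}), a zig-zag of weak equivalences of cofibrant bimodules between $M_c$ and the cofibrant replacement of some $F_*(\cP)$ suffices to propagate the statement. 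If $\psi:\R\to\cS$-style object changes are needed, Proposition~\ref{morita1} covers the case where the change happens on the right factor; but for a fixed $\Q$ I expect only Lemma~\ref{morita2} and Proposition~\ref{mortia3} to be required.

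The main obstacle, and the step that deserves care, is producing the zig-zag of weak equivalences of \emph{cofibrant} pointed $\cP$-$\Q$-bimodules connecting a general right quasi-free $M$ to a bimodule of the form $F_*(\cP)$ — in other words, showing that the quasi-free condition $\Q\xrightarrow{\sim}\End_\Q(M)$ really does force $M$ into the image (up to weak equivalence) of the restriction functor on the nose, not merely up to homotopy. One should use the Quillen equivalence of Corollary after Remark~\ref{pointedisreduced} identifying pointed $\cP$-$\Q$-bimodules with $[\underline{\Hom}]_1(\cP,\M_\Q)$-algebras, together with the fact that $\End_\Q(\Q)\cong\Q$ (the Example following Corollary~\ref{bicomplete}): a quasi-free $M$ is a $\cP$-algebra in $\M_\Q$ whose underlying module is weakly equivalent to the unit $\Q$, and by Proposition~\ref{extensionrestrictionofalgebras} (the Quillen equivalence part, applied to a cofibrant resolution of the structure map) such an algebra is weakly equivalent to one pulled back along a genuine multifunctor. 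Pushing this comparison through $E$ and then through Lemma~\ref{morita2} finishes the argument; a little bookkeeping is needed to ensure all intermediate bimodules are simultaneously cofibrant, for which one factors the comparison maps as cofibrations followed by trivial fibrations in the model structure on ${}_\cP\M_\Q$.
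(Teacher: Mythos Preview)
Your overall strategy is exactly the paper's: reduce to the case of a bimodule arising from a genuine multifunctor (Proposition~\ref{mortia3}), then propagate via weak equivalences using Lemma~\ref{morita2} and Proposition~\ref{morita1}. Where your proposal breaks down is precisely the step you flag as ``the main obstacle'': you never actually construct the multifunctor $\beta:\cP\to\Q$ to which the general quasi-free $M$ is compared. Your appeal to Proposition~\ref{extensionrestrictionofalgebras} does not do this --- that proposition concerns Quillen equivalences induced by a \emph{given} weak equivalence $\phi:\cP\to\Q$ of operads; it cannot manufacture a multifunctor out of the data ``$M$ is a $\cP$-algebra in $\M_\Q$ whose underlying right $\Q$-module is weakly equivalent to $\Q$''. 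The quasi-free hypothesis gives you a weak equivalence $\Q\xrightarrow{\sim}\End_\Q(M)$ and a structure map $\alpha:\cP\to\End_\Q(M)$, but these point the wrong way to be composed into a map $\cP\to\Q$.

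The device the paper uses to close this gap is the endomorphism operad of the basepoint map $f:\Q\to M$. One forms the pullback square
\[
\xymatrix{\End_\Q(f)\ar[r]\ar[d] & \End_\Q(\Q)\ar[d]\\ \End_\Q(M)\ar[r] & \End_\Q(\Q,M)}
\]
and Theorem~\ref{invariance} (using that $f$ is a cofibration between suitably bifibrant objects, which one arranges via the model structure on pointed bimodules) yields weak equivalences $\End_\Q(f)\xrightarrow{\sim}\End_\Q(M)$ and $\End_\Q(f)\xrightarrow{\sim}\End_\Q(\Q)\cong\Q$. Since $\cP$ is $\Sigma$-cofibrant, the structure map $\alpha:\cP\to\End_\Q(M)$ lifts (up to homotopy) through the trivial fibration $\End_\Q(f)\to\End_\Q(M)$, and the composite $\beta:\cP\to\End_\Q(f)\to\Q$ is the honest multifunctor you need. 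Proposition~\ref{mortia3} applies to $\beta$, and then Proposition~\ref{morita1} (transporting along the weak equivalences of endomorphism operads just produced) carries the conclusion back to $\alpha$, i.e.\ to $M$ itself. So the missing ingredient in your sketch is precisely this lift-and-project maneuver through $\End_\Q(f)$.
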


\begin{proof}Let $M$ be a right quasi-free $\cP$-$\Q$-bimodule. We may assume that $M$ is fibrant and cofibrant by Lemma~\ref{morita2} and that the morphism $F_{\cP}(\Q)\longrightarrow M$ is a cofibration. The model structure on ${}_{\cP}\M^*_{\Q}$ implies that we still have a cofibration after forgetting the $\cP$-algebra structure $U_{\cP}F_{\cP}(\Q)\longrightarrow U_{\cP}(M).$\

Consider the square of right $\Q$-modules:
\begin{equation}
\xymatrix{
\End_{\Q}(f)\ar[r] \ar[d] &  \End_{\Q}(\Q) \ar[d] \\
\End_{\Q}(M)\ar[r] & \End_{\Q}(\Q,M). 
}
\end{equation} By Theorem~\ref{invariance}, we know that we have weak equivalences $\End_{\Q}(f)\lra\End_{\Q}(M)$ and $\End_{\Q}(f)\lra\End_{\Q}(\Q)=\Q.$ Now, if we remember that $M$ is actually a $\cP$-$\Q$-bimodule, and thus we have a $\cP$-algebra structure map $\alpha:\cP\longrightarrow\End_{\Q}(M)$. As in the proof of Theorem~\ref{invariance}, we can lift the $\cP$-algebra structure on $\End_{\Q}(M)$ to a $\cP$-algebra structure on $\End_{\Q}(f)$ in a homotopy coherent way. Moreover, we can compose to get the following right quasi-free $\cP'$-$\Q$-bimodule$$\beta:\cP\longrightarrow\End_{\Q}(f)\longrightarrow\End_{\Q}(\Q)=\Q.$$ By the proposition~\ref{morita3}, we know that $\Q\ra\cP+\Q\ra\mathbb{L}E(\beta)$ is a weak equivalence by proposition~\ref{morita1} we know that this implies that $\Q\ra\cP+\Q\ra\mathbb{L}E(\alpha)$ is a weak equivalence. \end{proof}



\subsection{Equivalences of Moduli Spaces}The main theorem of this paper is the following. 

\begin{theorem}The derived mapping space $\Map^h(\cP,\Q)$ is weakly homotopy equivalent to the moduli space of right quasi-free $\cP$-$\Q$-bimodules, ${}_{\cP}\M^*_{\Q}$.\end{theorem}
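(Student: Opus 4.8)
The plan is to identify the derived mapping space $\Map^h(\cP,\Q)$ with the nerve of one of the zig-zag categories from Section~\ref{MappingSpaces}, and then to exhibit a pair of maps between that nerve and the moduli space $N({}_{\cP}\M^*_{\Q})$ which are mutually homotopy-inverse. The two functors are already in hand: on one side the enveloping functor $\mathbb{L}E$ sends a right quasi-free $\cP$-$\Q$-bimodule $M$ to an object of $(\cP+\Q)\searrow\multi(\C)$, and Theorem~\ref{MoritaTheorem} guarantees that $\Q\ra\cP+\Q\ra\mathbb{L}E(M)$ is a weak equivalence, so that $\mathbb{L}E(M)$ is precisely a zig-zag $[\cP\ra\mathbb{L}E(M)\bwe\Q]$; on the other side, Proposition~\ref{mortia3} shows that a multifunctor $F:\cP\ra\Q$ equips $\cP$ with the structure of a right quasi-free $\cP$-$\Q$-bimodule, giving a functor back from (a suitable rigid model of) $\Map^h(\cP,\Q)$ to ${}_{\cP}\M^*_{\Q}$.

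First I would fix the model for $\Map^h(\cP,\Q)$. Since $\multi(\C)$ is right proper but we need left properness for the zig-zag rigidification, I would work in the category of multicategories with a \emph{fixed} object set (which is left proper, as recalled after Proposition~\ref{morita1}), or invoke Proposition~\ref{rigidify1} after checking the hypothesis $X_c\coprod Y\ra X\coprod Y$ is a weak equivalence — this is exactly the statement that the coproduct of operads preserves weak equivalences that fix objects, cited from~\cite{DH,FZ}. This identifies $\Map^h(\cP,\Q)$ with the nerve of the zig-zag category $\cZ$ whose objects are $[\cP\ra Q\bwe\Q]$ with $\cP\ra Q$ arbitrary and $Q\we\Q$ a weak equivalence (equivalently, the comma construction $(\cP+\Q)\searrow\multi(\C)$ restricted to objects for which the structure map from $\Q$ is a weak equivalence). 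The second step is to observe that $\mathbb{L}E$ and the restriction functor $F\mapsto F^*\cP$ are, up to natural weak equivalence, adjoint: the unit of the enveloping adjunction $(E,U)$ composed with the augmentation from the Hochschild resolution gives, on the cofibrant objects, the natural transformation $M\ra U\mathbb{L}E(M)$, and Proposition~\ref{mortia3} together with Example~\ref{PEndP} and Proposition~\ref{Hochschildgood} shows this is a weak equivalence; conversely $\mathbb{L}E$ applied to the bimodule $F^*\cP$ recovers the zig-zag $[\cP\ra\mathbb{L}E(F)\bwe\Q]$ that represents $F$ itself.

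The third step is to turn these natural weak equivalences into a homotopy equivalence of nerves, using the standard over-category criterion recalled in the introduction (Quillen's Theorem B / \cite[5.2]{GJ}): it suffices to show that for each object of the target the relevant over-category (or under-category) is contractible. Concretely, $\mathbb{L}E$ induces $N({}_{\cP}\M^*_{\Q})\ra N(\cZ)\simeq\Map^h(\cP,\Q)$, and I would show each fiber over a zig-zag $[\cP\ra Q\bwe\Q]$ is the moduli space of cofibrant right quasi-free bimodules $M$ equipped with a weak equivalence $\mathbb{L}E(M)\we Q$ under $\cP+\Q$; since $M\mapsto\mathbb{L}E(M)$ is, after passing to cofibrant objects, essentially a Quillen-type equivalence between ${}_{\cP}\M^*_{\Q}$ and the relevant slice of $\multi(\C)$ (this is the content assembled from Lemma~\ref{morita2}, Proposition~\ref{morita1}, Proposition~\ref{mortia3}, and Theorem~\ref{MoritaTheorem}), each such fiber is contractible. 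An alternative, cleaner route is to use the induced map $wU:w\mathcal{B}\we w\M$ for a right adjoint of a Quillen pair quoted in the introduction: exhibit $\mathbb{L}E$ and $F\mapsto F^*\cP$ as a Quillen adjunction between ${}_{\cP}\M^*_{\Q}$ (with its induced model structure) and a left-proper model category of zig-zags, check it is a Quillen equivalence using the lemmas of Section~\ref{Moritatheory}, and conclude that the induced map on moduli spaces is a weak homotopy equivalence.

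The main obstacle is step three: the functor $\mathbb{L}E$ is \emph{not} a Quillen right (or left) adjoint between honest model categories in the naive way — $\multi(\C)$ is not even a monoidal model category, $\otimes_{BV}$ does not preserve weak equivalences, and $\mathbb{L}E$ lands in a slice category rather than the homotopy category. So the Quillen-equivalence packaging must be done carefully: one either restricts everything to fixed-object-set subcategories where left properness holds and the coproduct of operads is homotopically well-behaved, or one argues directly with Theorem B that the relevant comma categories are contractible, which requires knowing that $\mathbb{L}E$ preserves enough weak equivalences (only between cofibrant objects — hence the repeated insistence above on replacing bimodules by cofibrant ones and using the Hochschild resolution, for which $\mathbb{L}E(\cP)=\cP$). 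Verifying the homotopy-coherence compatibility of the $\cP$-algebra structures under these replacements — the appeal to Theorem~\ref{invariance} that recurs in Proposition~\ref{mortia3} and Theorem~\ref{MoritaTheorem} — is where the real work lies, and I would expect the bulk of the proof to consist of checking that the zig-zag $[\cP\ra Q\bwe\Q]$ attached to $M$ via $\mathbb{L}E$ depends functorially and weak-equivalence-invariantly on $M$ well enough to induce the map of nerves, and that the two composites are connected to the identity by natural weak equivalences.
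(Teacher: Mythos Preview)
Your proposal is essentially correct and identifies all of the key ingredients, but the route you label as the ``alternative, cleaner route'' is in fact the one the paper takes, while your primary plan (exhibiting $\mathbb{L}E$ and $F\mapsto F^*\cP$ as mutually homotopy-inverse via explicit natural weak equivalences) is not how the paper argues.

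The paper's proof is very short: it observes that the full subcategory of $(\cP+\Q)\searrow\multi(\C)$ on those $\R$ for which $\Q\to\cP+\Q\to\R$ is a weak equivalence is exactly a zig-zag category $[\cP\to\R\bwe\Q]$ modeling $\Map^h(\cP,\Q)$; Theorem~\ref{MoritaTheorem} shows that the Quillen pair $(E,U)$ restricts to a pair between this subcategory and the right quasi-free bimodules; and then the conclusion is drawn directly from Proposition~\ref{hofiber} (the Theorem~B computation of the homotopy fiber of $wU$) together with Theorem~\ref{homotopymapping}. No explicit unit/counit comparison or contractible-fiber argument is carried out beyond what is already packaged in those two results. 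Your plan to verify natural weak equivalences for both composites would work but is more laborious; the paper instead leans on the general fact (stated in the introduction) that a Quillen pair in which the right adjoint preserves all weak equivalences induces a weak equivalence on moduli spaces, once one has restricted to the matching subcategories. The obstacle you flag---that $\mathbb{L}E$ is not literally a Quillen functor on the big categories and one must pass to fixed-object-set subcategories for left properness---is real and is exactly why the paper sets up Proposition~\ref{hofiber} in the left-proper fixed-object category $\mathcal{B}$.
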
 

The adjoint pair $(E,U)$ is a Quillen pair, in which $U$ preserves all weak equivalences. Before we can prove the main theorem we assume that $\alpha:\cP\lra\End_{\Q}(M)$ is a fixed $\cP$-$\Q$-bimodule structure, and consider the homotopy fiber of $U:\cP+\Q\searrow\multi(\C)\lra {}_{\cP}\M^*_{\Q}$ at the basepoint $M$.\

Let $\mathcal{B}$ be the subcategory of $\multi(\C)$ consisting of simplicial multicategories with fixed object sets $\obj(\cP)\times\obj(\Q)$. In this case, $\mathcal{B}$ is a left proper model category (see,~\cite{Rezk2}). For this next proposition, we consider the restriction of the adjoint pair $(E,U)$ to $\mathcal{B}$.\

\begin{prop}\label{hofiber}Suppose that $(E,U)$ is the Quillen pair above, and that the right adjoint $U,$ preserves all weak equivalences. Then if either:\begin{enumerate}
\item $M$ in ${}_{\cP}\M^*_{\Q}$ is cofibrant, or
\item $\mathcal{B}$ is a left proper model category and $E(M_c)\lra E(M)$ is a weak equivalence in $\mathcal{B}$ for all $M$ in ${}_{\cP}\M^*_{\Q}$
\end{enumerate}the homotopy fiber of $wU$ over $M$ in $\mathcal{B}$ is equivalent to the nerve of the under category $M\searrow wU$.\end{prop}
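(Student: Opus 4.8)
The plan is to recognize Proposition~\ref{hofiber} as an instance of the standard Quillen Theorem $B$ argument that was flagged as ``standard'' in the Notation and Conventions section, applied to the functor $wU : w\mathcal{B} \to {}_{\cP}\M^*_{\Q}$ (restricted to the subcategory $\mathcal{B}$ of multicategories with fixed object set $\obj(\cP)\times\obj(\Q)$). Recall the criterion stated there: if $wU \searrow M \to wU \searrow M'$ induced by composition with any morphism $M \to M'$ in ${}_{\cP}\M^*_{\Q}$ is a weak homotopy equivalence, then the homotopy fiber of $wU$ over $M$ is naturally weakly equivalent to $wU \searrow M$; the dual version gives the under category $M \searrow wU$. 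So the first step is to set up which variant (over vs.\ under category) we want and to reduce the statement to verifying the relevant invariance-under-composition hypothesis of Theorem $B$.

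First I would dispose of the two cases by reducing case (2) to case (1). Under hypothesis (2), the derived functor $\mathbb{L}E$ and the honest functor $E$ agree up to weak equivalence on all of ${}_{\cP}\M^*_{\Q}$ (since $E(M_c) \to E(M)$ is a weak equivalence in $\mathcal{B}$), so $wE$ and $w(\mathbb{L}E)$ induce the same map on moduli spaces; left properness of $\mathcal{B}$ is exactly what is needed for the cofibrant-replacement comparison to behave well and for the pushout-product / gluing arguments used implicitly. In case (1), $M$ is already cofibrant, so $E(M) = \mathbb{L}E(M)$ on the nose and no replacement is needed. In either case we are left to analyze the homotopy fiber of $wU : w\mathcal{B} \to {}_{\cP}\M^*_{\Q}$ over the vertex $M$, using that $(E,U)$ is a Quillen pair with $U$ preserving \emph{all} weak equivalences (this is the hypothesis, and it is what makes $wU$ a well-defined map of moduli spaces without restricting to fibrant objects).

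The heart of the argument is then to verify the hypothesis of Quillen's Theorem $B$: for every weak equivalence (equivalently, every morphism of ${}_{\cP}\M^*_{\Q}$, since that category's morphisms in its moduli category are weak equivalences) $g : M \to M'$, the induced functor $M \searrow wU \to M' \searrow wU$ is a weak homotopy equivalence. An object of $M \searrow wU$ is a pair $(\cR, M \to U\cR)$ with $\cR \in w\mathcal{B}$; composition with $g$ (or rather, since we want the \emph{under} category, the appropriate variance) relates $M \searrow wU$ and $M' \searrow wU$. The key input here is the adjunction $(E,U)$ together with the fact that $E$ preserves weak equivalences between cofibrant objects: an object of $M \searrow wU$ corresponds adjointly to a map $E(M) \to \cR$, i.e.\ to an object of $E(M) \searrow w\mathcal{B}$, and weak equivalences $M \to M'$ are carried by $\mathbb{L}E$ to weak equivalences $E(M) \to E(M')$ (using case (1)/(2) as above), inducing an equivalence $E(M') \searrow w\mathcal{B} \simeq E(M) \searrow w\mathcal{B}$. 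Transporting back across the adjunction identifies $M \searrow wU \simeq M' \searrow wU$ compatibly, which is exactly the Theorem $B$ hypothesis. Applying the (dual, under-category) form of the standard argument recalled in the introduction then yields that the homotopy fiber of $wU$ over $M$ is naturally weakly equivalent to the nerve of $M \searrow wU$, completing the proof.

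The main obstacle is the adjunction-transport step: one must check that the equivalence $M \searrow wU \simeq E(M) \searrow w\mathcal{B}$ induced by $(E,U)$ is genuinely an equivalence of categories (or at least a weak homotopy equivalence of nerves) and that it is natural in $M$ in the way needed to feed Theorem $B$ — the subtlety being that $U$ is a right adjoint defined on \emph{all} objects while $E$ interacts well with weak equivalences only on cofibrant ones, so one has to be careful that the comparison does not secretly require fibrancy/cofibrancy hypotheses that are not available. This is precisely where hypotheses (1) and (2) do their work, and where left properness of $\mathcal{B}$ is essential; verifying it is mostly a matter of chasing the unit/counit of the $(E,U)$-adjunction through the nerve construction, but it is the one place where genuine care rather than formal nonsense is required.
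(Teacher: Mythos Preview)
Your strategy---apply Quillen's Theorem~B to $wU$ and verify its invariance hypothesis using the adjunction $(E,U)$---is the right idea, and under hypothesis~(2) it can be made to work more or less as you describe, since the assumption that $E(M_c)\to E(M)$ is always a weak equivalence forces $E$ to preserve \emph{all} weak equivalences. But there is a genuine gap in your treatment of case~(1), and your reduction of~(2) to~(1) goes in the wrong direction.

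The issue is this: Theorem~B requires that \emph{every} morphism $g:N\to N'$ in the target category $w({}_{\cP}\M^*_{\Q})$ induce a weak homotopy equivalence on under categories. Hypothesis~(1) only says that the particular object $M$ over which you want the fiber is cofibrant; it says nothing about other objects of $w({}_{\cP}\M^*_{\Q})$. Your argument that ``weak equivalences $M\to M'$ are carried by $\mathbb{L}E$ to weak equivalences $E(M)\to E(M')$'' only works when both $M$ and $M'$ are cofibrant, since $E$ is only known to preserve weak equivalences between cofibrant objects. So you cannot verify the Theorem~B hypothesis for $wU$ directly. (A related imprecision: the adjunction identifies $M\searrow wU$ not with $E(M)\searrow w\mathcal{B}$ but only with a union of components of $w(EM\searrow\mathcal{B})$, namely those where the adjoint map is a weak equivalence---$(E,U)$ is not assumed to be a Quillen equivalence here.)

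The paper's fix is to interpose a path-space construction: form the category $w({}_{\cP}\M_{\Q})_c\searrow wU$ of pairs (cofibrant $M_1$, weak equivalence $M_1\to UX_1$), with projections $pr_1$ to $w({}_{\cP}\M_{\Q})_c$ and $pr_2$ to $w\mathcal{B}$. Since the target of $pr_1$ consists entirely of cofibrant objects, the invariance hypothesis of Theorem~B can now be checked for $pr_1$ using exactly your adjunction argument. One then compares the homotopy fibers of $pr_1$ and $wU$ via the commuting square with $pr_2$ and the inclusion $in: w({}_{\cP}\M_{\Q})_c\hookrightarrow w({}_{\cP}\M_{\Q})$, both of which are weak equivalences. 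This restriction-to-cofibrants trick is the missing ingredient in your outline.
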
 

\begin{proof}
The assumption that our right adjoint $U:\mathcal{B}\lra{}_{\cP}\M_{\Q}$ preserves weak equivalences, implies that the under category $M\searrow wU$ is isomorphic to the union of the components of the moduli space $w(EM\searrow \mathcal{B})$ containing maps $\alpha:EM\lra X_1$ whose adjoint $\alpha':M\lra UX_1$ is a weak equivalence in ${}_{\cP}\M^*_{\Q}$. Objects of the category $M\searrow wU$ look like $[M\we U(X_1)]$ and morphisms $[M\we U(X_1)]\ra [M\we U(X_2)]$ are weak equivalences $U(f):U(X_1)\we U(X_2)$ in ${}_{\cP}\M^*_{\Q}$ such that the obvious diagram commutes. Objects of $w(EM\searrow \mathcal{B})$ look like $[EM\we X_1]$ and morphisms $[EM\we X_1]\ra[EM\we X_2]$ are weak equivalences $f:X_1\we X_2$ in $\mathcal{B}$ such that the obvious diagram commutes.\

If we assume condition $(1)$, it follows from~\cite[After 2.5]{Rezk2} that given a weak equivalence $M\ra N$ between cofibrant objects in ${}_{\cP}\M^*_{\Q}$ that the induced map $M\searrow wU\lra N\searrow wU$ is a weak equivalence. If we instead assume condition $(2)$, then we know that the natural map $M\searrow wU\lra M_c\searrow wU$ is a weak homotopy equivalence by Rezk's characterization of left properness~\cite[2.5]{Rezk2}. \

Now, consider the subcategory of \emph{cofibrant} pointed $\cP$-$\Q$-bimodules $({}_{\cP}\M^*_{\Q})_c\subset {}_{\cP}\M^*_{\Q}.$ The objects of the category $w({}_{\cP}\M_{\Q})_{c}\searrow wU$ look like  $[M_1\we U(X_1)]$, where $M_1$ is a cofibrant pointed $\cP$-$\Q$-bimodule. Morphisms $[M_1\we U(X_1)]\lra [M_2\we U(X_2)]$ are pairs $(\phi,f)$ where $\phi:M_1\we M_2$ is a weak equivalence between cofibrant pointed $\cP$-$\Q$-bimodules and $U(f):U(X_1)\we U(X_2)$ is a weak equivalence (i.e. $f:X_1\ra X_2$ is a weak equivalence in $\mathcal{B}$). \

The category $w({}_{\cP}\M^*_{\Q})_{c}\searrow wU$ is a \emph{path space} construction. In particular, we have a commutative diagram:\begin{equation}\label{pathspace}\xymatrix{
w({}_{\cP}\M_{\Q})_{c}\searrow wU \ar[r]^{pr_2}\ar[d]^{pr_1} & w\mathcal{B} \ar[d]^{wU} \\
w({}_{\cP}\M_{\Q})_c \ar[r]_{in} & w({}_{\cP}\M_{\Q}) .}\end{equation} The map $pr_1$ takes the object $[M_1\we U(X_1)]$ to $M_1$ and $pr_2$ takes $[M_1\we U(X_1)]$ to $X_1$ in $\mathcal{B}$. The map $in$ is the induced inclusion of moduli spaces. Both the maps $in$ and $pr_2$ induce weak homotopy equivalences on moduli spaces.\

For a fixed cofibrant pointed $\cP$-$\Q$-bimodule $M$ we can construct a functor $M\searrow wU \lra M\searrow pr_2$, which is natural in $M$ and induces a weak homotopy equivalence on nerves. We want to show that the homotopy fiber of $pr_2$ over a cofibrant $M$ is weakly homotopy equivalent to the nerve of $M\searrow pr_2$. Now, we have already argued that every weak equivalence between (the cofibrant objects) $M_1\we M_2$ induces a weak equivalence of the categories $M_1\searrow pr_2 \we M_2\searrow pr_2$. Now, we apply Quillen's Theorem $B$ to show that the homotopy fiber of $pr_2$ over a cofibrant $M$ is weakly homotopy equivalent to the nerve of $M\searrow pr_2$. The proposition now follows from the diagram~\ref{pathspace}. \end{proof}

Under the same hypotheses as the previous proposition, we can consider what happens to derived mapping spaces under the adjunction $(E,U)$.

\begin{theorem}\label{homotopymapping}Suppose that $(E,U)$ is as above, and that the right adjoint $U,$ preserves all weak equivalences. Then there is a natural weak homotopy equivalence $\Map^h(M, U(X_f))\we\Map^{h}(E(M_c),X).$ \end{theorem}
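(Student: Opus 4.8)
The plan is to identify both sides of the asserted equivalence with homotopy fibers of the same map, using Proposition~\ref{hofiber} together with the standard fact (recalled in the introduction via Quillen's Theorem~$B$, see~\cite[5.2]{GJ}) that the homotopy fiber of the nerve of a functor $F$ over a vertex $X$ is the nerve of the over/under category $F\searrow X$ (resp.\ $X\searrow F$), provided the comparison maps $F\searrow X\to F\searrow X'$ are weak equivalences. First I would fix a cofibrant--fibrant model: replace $M$ by a cofibrant pointed $\cP$-$\Q$-bimodule $M_c$ and $X$ by a fibrant object $X_f$ in $\cB$, so that $U(X_f)$ is a fibrant model for $U(X)$ and $E(M_c)$ is a cofibrant model for $E(M)$ (here we use that $(E,U)$ is a Quillen pair and that $U$ preserves all weak equivalences). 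On the right-hand side, $\Map^h(E(M_c),X)$ with $E(M_c)$ cofibrant is, by the discussion following Proposition~\ref{rigidify1}, the nerve of the zig-zag category $[E(M_c)\to Q\bwe X]$, i.e.\ the nerve of $E(M_c)\searrow(w\cB)$ — more precisely it is the homotopy fiber of $w\mathrm{pr}_2$ (in the notation of diagram~\ref{pathspace}) over $E(M_c)$.

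Next I would run the argument in the proof of Proposition~\ref{hofiber} backwards: that proof shows, under exactly the hypotheses assumed here (namely $(E,U)$ a Quillen pair with $U$ preserving weak equivalences, plus either cofibrancy of $M$ or left properness of $\cB$ with $E(M_c)\to E(M)$ a weak equivalence), that the under category $M\searrow wU$ is isomorphic to the union of components of $w(EM\searrow\cB)$ spanned by maps $EM\to X_1$ whose adjoint $M\to UX_1$ is a weak equivalence. Passing to $M$ cofibrant and using the path-space diagram~\ref{pathspace}, one gets that $M\searrow wU$ computes the homotopy fiber of $wU:w\cB\to w({}_{\cP}\M_\Q)$ over $M$, while also $M\searrow w\,\mathrm{pr}_2$ — which is weakly equivalent to $M\searrow wU$ by the natural map constructed there — computes the homotopy fiber of $w\,\mathrm{pr}_2$ over $E(M)$. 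The left-hand side $\Map^h(M,U(X_f))$ is, again via the cofibrant-source model, the nerve of the zig-zag category $[M\to Q\bwe U(X_f)]$; restricting to zig-zags whose composite $M\to U(X_f)$ is a weak equivalence (which is automatic once one thinks of a component of the under category) identifies it with the relevant fiber of $wU$.

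So the heart of the proof is to assemble a commutative square whose vertical maps are $wU$ and $w\,\mathrm{pr}_2$ and whose horizontal maps are the weak equivalences $in$ and $pr_2$ of diagram~\ref{pathspace}, conclude that the induced map on vertical homotopy fibers is a weak equivalence, and then observe that the fiber of $w\,\mathrm{pr}_2$ over $E(M)$ is by definition (a component of) $\Map^h(E(M_c),X)$ while the fiber of $wU$ over $M$ is $\Map^h(M,U(X_f))$. One must be a little careful about components: the derived mapping spaces are the nerves of the full zig-zag categories, whereas the under categories $M\searrow wU$ only see the zig-zags with invertible composite; but the components of $\Map^h$ are indexed by $\pi_0$ and the natural map from the ``invertible-composite'' subcategory to the full mapping space is a weak equivalence onto its image, and the naturality in $X$ lets one sum over components. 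The main obstacle is this bookkeeping of components together with verifying that the natural transformation $M\searrow wU\to M\searrow w\,\mathrm{pr}_2$ is compatible with the adjunction counit/unit so that the identification of fibers is natural in both $M$ and $X$; the underlying homotopy-theoretic input — Quillen's Theorem~$B$ and left properness of $\cB$ — is already in place from Proposition~\ref{hofiber}, so no genuinely new model-categorical work is needed.
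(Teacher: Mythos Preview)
There is a real gap in your approach. The under category $M\searrow wU$ from Proposition~\ref{hofiber} consists of objects $[M\we U(X_1)]$ in which the structure map is required to be a \emph{weak equivalence} and $X_1$ ranges over all of $\mathcal{B}$; its nerve is a classifying-space-type object, not a mapping space into a fixed target. In particular it cannot model $\Map^h(M,U(X_f))$ for a specified $X$: the object $X$ appears nowhere in that construction, and at best only the single component of the mapping space consisting of weak equivalences could be visible. The same objection applies to your identification of ``the fiber of $w\,\mathrm{pr}_2$ over $E(M)$'' with $\Map^h(E(M_c),X)$. Your proposed repair---summing over components via naturality in $X$---does not work as stated, because the theorem concerns a single fixed $X$ and the moduli-space diagram~\eqref{pathspace} has no mechanism for isolating maps whose target lies in the weak-equivalence class of that $X$.

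The paper instead builds a new auxiliary category $\mathcal{Z}$ whose objects are zig-zags $[M\bwe E(M_1)\to X_1\bwe X]$ with \emph{both endpoints fixed} and the middle arrow unconstrained. Projecting to the left tail gives a functor $F:\mathcal{Z}\to w({}_{\cP}\M_{\Q})_c\searrow M$; the base has a terminal object and is therefore contractible, while the fiber over $[M\bwe M_2]$ is precisely the zig-zag model for $\Map^h(E(M_2),X)$. Hence $\mathcal{Z}\simeq\Map^h(E(M),X)$. A dual category $\mathcal{Z}'$ with objects $[M\bwe M_1\to U(X_1)\bwe X]$ yields $\Map^h(M,U(X))$ by the same contractible-base argument, and the adjunction supplies the comparison $\mathcal{Z}\to\mathcal{Z}'$. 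The device you are missing is this ``fiber over a contractible base'' maneuver, which produces the entire mapping space---all components at once---rather than only the subspace of invertible maps that the moduli fibers of Proposition~\ref{hofiber} detect.
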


\begin{proof}Assume that $M$ is cofibrant in ${}_{\cP}\M^*_{\Q}$ and that $X$ is fibrant in $\mathcal{B}$. We can construct a category $\mathcal{Z}$ which has objects $[M\bwe E(M_1)\ra X_1\bwe X]$ and morphisms are pairs $(E(\phi),f)$ where $\phi:E(M_1)\lra E(M_2)$ and $f:X_1\lra X_2$ are morphisms which make the obvious diagram commute. We will show that the nerve of $Z$ is weakly homotopy equivalent to $\Map^{h}(EM,X)$. \

Let $({}_{\cP}\M_{\Q})_c$ be the subcategory of cofibrant pointed $\cP$-$\Q$-bimodules; let $M\bwe M_1$ be a weak equivalence between cofibrant pointed bimodules. A functor $F:Z\lra w({}_{\cP}\M_{\Q})_{c}\searrow M$ is given by $$[M\bwe E(M_1)\ra X_1\bwe X]\lra [M\bwe M_1].$$ \

Fix the object $Y:=[M\bwe M_2]$. Then consider the over category $Y\searrow F$. Objects of $Y\searrow F$ can be re-written as $[M\bwe M_2\ra X_1\bwe X]$. The nerve of $Y\searrow F$ is weakly homotopy equivalent to $\Map^{h}(E(M_2),X)$. By the previous proposition, $\Map^{h}(E(M_2),X)$ is weakly homotopy equivalent to the homotopy fiber of the functor $F$. Since $w({}_{\cP}\M^*_{\Q})_c\searrow M$ has a terminal object, the moduli space $w({}_{\cP}\M^*_{\Q})_c\searrow M$ is contractible, and thus $\Map^{h}(E(M_2),X)$ is weakly homotopy equivalent to the nerve of $Z$. \

In a similar manner, we can consider a category $\mathcal{Z}'$ with objects that look like $$[M\bwe M_1\ra U(X_1)\bwe X].$$ A dual argument to above shows that $\mathcal{Z}'$ is weakly homotopy equivalent to $\Map^{h}(M, U(X_2))$. It is then clear that there is a weak homotopy equivalence $\mathcal{Z}\lra\mathcal{Z}'$. \end{proof} 

\begin{proof}[Proof of the Main Theorem]The subcategory of objects of $\cP+\Q\searrow\multi(\C)$ which satisfy the property that the natural map $\Q\ra\cP+\Q\ra\R$ is a weak equivalence is a category of zig-zags $[\cP\lra \R\bwe \Q]$. What's more, if we consider the restrictions of the functor pair $(E,U)$ to right quasi-free objects, then the functors fix objects. The theorem now follows from proposition~\ref{hofiber} and theorem~\ref{homotopymapping}.\end{proof}

\section{The Cosimplicial Model}As we mentioned in Section~\ref{MappingSpaces}, given that we assume $\Q$ is fibrant, we can also take a cosimplicial resolution of $\cP$ to obtain a model for $\Map^{h}(\cP,\Q)$. The fact that we have a fibrant replacement functor that fixes objects is key to what follows. The point of this section is to explicitly describe the components for this model of $\Map^{h}(\cP,\Q),$ from which we can then describe internal hom-objects.

\subsection{Cosimplicial Operads}The category $\multi(\C)$ is \emph{fibered} over varying sets of objects (\cite[1.6]{BM07}). A \emph{cosimplicial} multicategory is a cosimplicial object in this fibered category. More explicitly, a multicategory $\cP^{\bullet}$ is given by a cosimplicial \emph{set} of objects $C^{\bullet}$, such that for each $n$ we have an operad $\cP^n$, with object set $C^{n}$. The maps $\cP^n\leftrightarrows\cP^m$ are induced by arrows $C^n\leftrightarrows C^m$ that come from maps $[n]\lra[m]$ in $\Delta$.\

The \emph{geometric realization} of a cosimplicial operad $\cP^{\bullet}$ over $C^\bullet$, is given by a functor $\sSets\lra\multi(\C)$ which sends $X$ to the multicategory $|X|_{P^\bullet}:=X\otimes_\Delta P^\bullet$ with objects $X\otimes_\Delta C^\bullet$. An algebra over $|X|_{\cP^\bullet}$ consists of an algebra $A_x$ over the $C^n$-colored operad $P^n$, where we allow $x$ to vary over the $n$-simplices of $X$. 

As one might expect, this cosimplicial object $P^{\bullet}$ is completely determined by its $2$-skeleton (see~\cite[6]{BM07}).  This means that given a simplicial set $X$, the inclusion $sk_2(X)\lra X$ of the $2$-skeleton of $X$ induces a weak equivalence\begin{equation*}|sk_2(X)|_{P^{\bullet}}\overset{\cong}{\lra}|X|_{P^{\bullet}}\end{equation*}of multicategories. 

\begin{example}Let $\mathbb{A}^0=\mathbb{A}$ be the operad whose algebras are associative unitary monoids. Let $\mathbb{A}^1=BiMod$ be the operad with $3$-objects whose algebras are triples $(A_0,M,A_1)$, where $A_0,A_1$ are $\mathbb{A}^0$-algebras, and $M$ is an $A_0$-$A_1$-bimodule. The operads $\mathbb{A}^0$ and $\mathbb{A}^1$ form part of a cosimplicial operad $\mathbb{A}^{\bullet}$ over the cosimplicial set given by$$C^n=\{a_i\,|\,0\leq i\leq n\}\cup\{b_{ij}\,|\,0\leq i<j\leq n\},$$the objects for $n+1$ monoids $A_i$ and $\frac{n+1}{2}$ bimodules $M_{ij}$. Applying the Boardman-Vogt resolution, one gets a cosimplicial operad $W(\mathbb{A}^{\bullet})$. The $W(\mathbb{A}^0)$-algebras are $A_\infty$-algebras, the $W(\mathbb{A}^1)$-algebras are $\infty$-bimodules over such $A_{\infty}$-algebras, and so on.\end{example}In a similar manner, we can select any $\cP$ and create a cosimplicial $\cP^{\bullet}$ which parameterizes strings of morphisms between $\cP$-algebras described as follows. Let $\cP$ be $\cP^0$, and $\cP^n$ be the multicategory whose algebras are $n$-simplicies\begin{equation*}A_0\rightarrow A_1\rightarrow\cdots\rightarrow A_n\end{equation*}of $\cP$-algebra homomorphisms. \

In the case where $\cP$ has one object, the multicategory $\cP^{1}$ has been extensively studied by Markl~\cite{Mar04}. The operad $\cP^1$ has objects $\{0,1\}$ and algebras triples $(A_0,A_1,f)$ where $A_0$ and $A_1$ are $\cP$-algebras, and $f:A_0 \rightarrow A_1$ is a map of $\cP$-algebras. The operations can be given explicitly by\begin{equation}\cP^1(x_1,\dots,x_n;x)=\begin{cases}\cP(n)&\text{if }\max(x_1,\dots,x_n)\leq x;\\0&\text{otherwise}.\end{cases}\end{equation}Notice that a $\cP^1$-algebra consists of exactly two objects $A_0$ and $A_1.$ the structure of a $\cP$-algebra, and the unit morphism $1:I\rightarrow\cP(1)$ corresponds to a map $\alpha:I \rightarrow\cP^1(0;1),$ which corresponds to a homomorphism of $\cP$-algebras, $f:A_0\rightarrow A_1$. One can write out an explicit description of $P^n$ in a similar way.\

The multicategory $\cP^n$ is defined to be the pushout\begin{equation}\label{n-morphisms}\cP^n=\cP^1 \sqcup_{\cP^0}\cP^1\sqcup_{\cP^0}\cP^1...\cP^1\sqcup_{\cP^0}\cP^1,\end{equation} over $\theta^i:\cP^1\longrightarrow\cP^n$ where $\theta^i:[1]\lra[n]$ ranges over the inclusions $\{0,1\}$ to $\{i,i+1\}$, $0\le i\le n-1$. There are extension and restriction functors which are induced by the face and degeneracy maps(See, ~\cite[6]{BM07}). For a cosimplicial operad $\cP^{\bullet}$, the categories of algebras $\Alg_{\cP^n}(\M),\,n\geq 0$, together form a (very large) \emph{simplicial} category, which we denote by $\Alg_{\cP^{\bullet}}(\M)$. 

\begin{observation}The simplicial category $\Alg_{\cP^{\bullet}}(\M)$ is the nerve of the category $\Alg_{\cP}(\M)$.\end{observation}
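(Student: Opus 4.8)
The plan is to prove that for every $n\ge 0$ the category $\Alg_{\cP^n}(\M)$ is naturally isomorphic to the functor category $\mathrm{Fun}([n],\Alg_\cP(\M))$, and that these isomorphisms are compatible with the cosimplicial structure of $\cP^\bullet$. Since $[n]\mapsto\mathrm{Fun}([n],\Alg_\cP(\M))$ is exactly the nerve of $\Alg_\cP(\M)$ (read in $\cCat$; on underlying object sets it is the ordinary nerve $N_\bullet\Alg_\cP(\M)$), this is the observation. The cases $n=0,1$ are the two identifications already recorded above: $\Alg_{\cP^0}(\M)=\Alg_\cP(\M)$, and a $\cP^1$-algebra is a triple $(A_0,A_1,f)$ with the $A_i$ being $\cP$-algebras and $f\colon A_0\to A_1$ a homomorphism, a morphism of such being a commuting square, so $\Alg_{\cP^1}(\M)\cong\mathrm{Fun}([1],\Alg_\cP(\M))$.

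For general $n$ I would push the pushout presentation \eqref{n-morphisms} of $\cP^n$ through the algebra-category construction, using the standard fact that algebras over a colimit of operads assemble into the corresponding limit of algebra categories. Applied to $\cP^n=\cP^1\sqcup_{\cP^0}\cdots\sqcup_{\cP^0}\cP^1$ along the maps $\theta^i$, this yields an iterated pullback
\[
\Alg_{\cP^n}(\M)\;\cong\;\Alg_{\cP^1}(\M)\times_{\Alg_\cP(\M)}\cdots\times_{\Alg_\cP(\M)}\Alg_{\cP^1}(\M)\qquad(n\text{ factors}),
\]
in which the two legs $\Alg_{\cP^1}(\M)\rightrightarrows\Alg_\cP(\M)$ out of adjacent factors are restriction along the ``target'' and ``source'' object inclusions $\cP^0\hookrightarrow\cP^1$, namely $(A_0,A_1,f)\mapsto A_1$ and $(A_0,A_1,f)\mapsto A_0$. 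The pullback is strict --- no $2$-categorical correction is needed --- because every restriction functor occurring is the identity on underlying $\M$-objects; equivalently one reads this off the explicit description of the operations of $\cP^n$, which exhibits a $\cP^n$-algebra as nothing more than a $\cP$-algebra $A_i$ for each of the $n+1$ objects together with a $\cP$-algebra homomorphism $f_i\colon A_{i-1}\to A_i$ for each consecutive pair. An object of the iterated pullback is then precisely a chain $A_0\xrightarrow{f_1}A_1\to\cdots\xrightarrow{f_n}A_n$ in $\Alg_\cP(\M)$, and a morphism a commuting ladder between two such chains, so $\Alg_{\cP^n}(\M)\cong\mathrm{Fun}([n],\Alg_\cP(\M))$ and its objects form the set $N_n\Alg_\cP(\M)$.

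It remains to check naturality in $[n]$. By construction the structure maps of $\Alg_{\cP^\bullet}(\M)$ are restriction along the cosimplicial structure maps of $\cP^\bullet$, which are induced by the cofaces and codegeneracies of $\Delta$ acting on the cosimplicial set of objects and on operations. Tracing these through the identification above, I would verify that restriction along the coface into $\cP^n$ attached to $d^i$ drops the first object and $f_1$ when $i=0$, drops the last object and $f_n$ when $i=n$, and replaces the pair $f_i,f_{i+1}$ by the composite $f_{i+1}f_i$ (deleting $A_i$) when $0<i<n$, while restriction along the codegeneracy attached to $s^i$ inserts an identity homomorphism $\mathrm{id}_{A_i}$ into the chain. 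These are exactly the face and degeneracy operators of $N_\bullet\Alg_\cP(\M)$, so the isomorphisms of the previous paragraph assemble into the asserted isomorphism of simplicial objects.

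The one step with genuine content is the last one, and within it the fact that the inner cofaces induce \emph{composition} of algebra homomorphisms rather than some other gluing. This comes down to unwinding the interaction of the map $\cP^{n-1}\to\cP^n$ with the pushout decomposition: the generating operation $\alpha\colon I\to\cP^1(0;1)$ that encodes ``apply the homomorphism'' and that in $\cP^{n-1}$ sits on the edge $(i-1\to i)$ is sent in $\cP^n$ to the $\circ$-composite of the edge $(i-1\to i)$ with the edge $(i\to i+1)$, and on algebras the associated structure map of that composite is $f_{i+1}\circ f_i$. Once this is pinned down, the remainder is routine bookkeeping with the explicit form of $\cP^n$ in \eqref{n-morphisms}.
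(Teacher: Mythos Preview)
The paper states this as an \emph{observation} and gives no proof at all; it is left as self-evident from the construction of $\cP^\bullet$ and the description of $\cP^n$-algebras as strings $A_0\to\cdots\to A_n$ of $\cP$-algebra homomorphisms immediately preceding the statement. Your argument is a correct and careful unpacking of exactly that: the pushout presentation \eqref{n-morphisms} of $\cP^n$ yields the iterated pullback of algebra categories, which is $\mathrm{Fun}([n],\Alg_\cP(\M))$, and the cosimplicial structure maps of $\cP^\bullet$ induce the nerve face and degeneracy operators. You also correctly note the categorified reading of ``nerve'' (as a simplicial object in categories rather than merely a simplicial set), which matches the paper's phrase ``simplicial category.'' So there is nothing to compare against --- you have supplied the details the paper omits, and your route is the natural one implicit in the text.
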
 

We can apply the $W$-construction to the cosimplicial operad $\cP^{\bullet}$ levelwise, and this provides a functorial cofibrant replacement $W(\cP^{\bullet}).$ In level $0$, $W(\cP^0)$ is just $W(\cP),$ i.e. a (functorial) cofibrant replacement for $\cP$. It was observed by Berger-Moerdijk~[6]\cite{BM07} that the category $\Alg_{W(\cP^{\bullet})}(\M)$ should be equivalent to the nerve of the category $\Alg_{\cP}(\M)$ up to weak equivalence. We use the results of the previous section to make this precise. \

Explicitly, we lift already know that ${}_{\cP}\M_{\Q}$ is weakly homotopy equivalent to $\Map^{h}(\cP,\Q)$ and, given a cosimplicial resolution of $\cP$ we should be able to construct a weak homotopy equivalence from ${}_{\cP}\M_{\Q}$ to a zig-zag of the form $[\cP\btrfib \cP^n \lra \Q]$ when $\Q$ is fibrant (see Section~\ref{MappingSpaces}). We will need the following theorem. 

\begin{theorem}\label{qe1}~\cite[Theorem 6.4]{BM07}Let $\M$ be a left proper, cofibrantly generated, monoidal model category over $\C$ and let $\cP$ be a $\Sigma$-cofibrant $\C$-enriched multicategory. If all of the cofibrant operads $W(\cP^n)$ all are admissible, then for $n\ge 2$, the map $\theta$ induces a Quillen equivalence\begin{equation}\label{eq4}\Alg_{W(\cP^1)}(\M)\times_{\Alg_{W(\cP^0)}(\M)}\times\cdots\times_{\Alg_{W(\cP^0)}(\M)}\Alg_{W(\cP^1)}(\M)\overset{\sim}{\longrightarrow}\Alg_{W(\cP^n)}(\M).\end{equation}\end{theorem}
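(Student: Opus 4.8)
The plan is to reduce the statement, following Berger--Moerdijk~\cite[Section~6]{BM07}, to a single weak equivalence of multicategories. First I would observe that by its very construction~(\ref{n-morphisms}) the multicategory $\cP^n$ is the iterated pushout $\cP^1\sqcup_{\cP^0}\cdots\sqcup_{\cP^0}\cP^1$ along the coface inclusions $\theta^i$, and that after this gluing all of the operads involved carry one fixed set of objects $C^n:=\obj(\cP^n)=C^1\sqcup_{C^0}\cdots\sqcup_{C^0}C^1$. Since a colimit of operads is carried to the corresponding limit of algebra categories (the underlying-object functors being monadic), $\Alg_{\cP^n}(\M)$ is, as a category, the strict fibre product $\Alg_{\cP^1}(\M)\times_{\Alg_{\cP^0}(\M)}\cdots\times_{\Alg_{\cP^0}(\M)}\Alg_{\cP^1}(\M)$, and the same identification holds after passing levelwise to $W(\cP^\bullet)$. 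The restriction legs $\Alg_{W(\cP^1)}(\M)\to\Alg_{W(\cP^0)}(\M)$ are right Quillen and preserve all weak equivalences (Proposition~\ref{extensionrestrictionofalgebras} and the lemma preceding it), so the fibre product inherits a model structure with weak equivalences and fibrations created levelwise; as these are in turn created in the various $\M^{C^i}$ and $C^n$ is the corresponding pushout of sets, this model structure agrees with the one on $\Alg_{\mathcal{R}}(\M)$, where $\mathcal{R}:=W(\cP^1)\sqcup_{W(\cP^0)}\cdots\sqcup_{W(\cP^0)}W(\cP^1)$ is the analogous pushout of operads. Under this identification the functor of the theorem is the extension of scalars $\theta_*\colon\Alg_{\mathcal{R}}(\M)\to\Alg_{W(\cP^n)}(\M)$ along the canonical comparison $\theta\colon\mathcal{R}\to W(\cP^n)$, which fails to be an isomorphism precisely because $W$ does not preserve pushouts.

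Next I would invoke Proposition~\ref{extensionrestrictionofalgebras} once more: it now suffices to check that $\mathcal{R}$ and $W(\cP^n)$ are admissible and $\Sigma$-cofibrant and that $\theta$ is a weak equivalence in $\multi(\C)$, for then $(\theta_*,\theta^*)$ is automatically a Quillen equivalence. Admissibility of $W(\cP^n)$ is hypothesised; admissibility of $\mathcal{R}$ holds because $\Alg_{\mathcal{R}}(\M)$ is the fibre product of the model categories $\Alg_{W(\cP^i)}(\M)$; and $\Sigma$-cofibrancy of both follows since each $W(\cP^i)$ is a cofibrant operad ($\cP$, and hence each $\cP^i$, being $\Sigma$-cofibrant and well pointed) and $\mathcal{R}$ is an iterated pushout of cofibrant operads along cofibrations.

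The whole argument thus comes down to showing that $\theta\colon\mathcal{R}\to W(\cP^n)$ is a weak equivalence, and here I would use the commuting triangle
$$\xymatrix{\mathcal{R}\ar[r]^{\theta}\ar[dr]_{\sim} & W(\cP^n)\ar[d]^{\sim}\\ & \cP^n,}$$
in which the vertical map is the Boardman--Vogt resolution of $\cP^n$ (a weak equivalence since $\cP^n$ is $\Sigma$-cofibrant and well pointed) and the diagonal is the map induced on the pushout by the resolutions $W(\cP^i)\we\cP^i$. By the $2$-out-of-$3$ property it is enough to see that the diagonal $\mathcal{R}\to\cP^n$ is a weak equivalence. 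Both $\mathcal{R}$ and $\cP^n$ are built by the same iterated pushouts, along the coface maps $W(\cP^0)\to W(\cP^1)$ and $\cP^0\to\cP^1$ respectively; working inside the left proper model category of $\C$-enriched multicategories with fixed object set $C^n$ (the left properness of these fixed-object categories being exactly what is used throughout this section, cf.~\cite{Rezk2}), these pushouts are homotopy pushouts once one knows the coface maps are cofibrations, whence the levelwise weak equivalence $W(\cP^\bullet)\we\cP^\bullet$ induces a weak equivalence $\mathcal{R}\we\cP^n$. The general case $n\ge2$ then reduces to $n=2$ by induction on the number of factors, using the gluing lemma in that same left proper model category. \textbf{The main obstacle} is precisely this cofibrancy of the coface maps, equivalently that the defining pushouts of $\cP^n$ and of $\mathcal{R}$ compute homotopy pushouts: the map $W(\cP^0)\to W(\cP^1)$ is a cofibration because $W(\cP^\bullet)$ may be taken to be a Reedy cofibrant cosimplicial operad whose level-one latching object is $W(\cP^0)$, while $\cP^0\to\cP^1$ is a cofibration because, by the explicit description of $\cP^1$ recalled above, $\cP^1$ arises from $\cP^0$ by a free extension (adjoining the new object together with the single generating operation $\alpha$). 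Checking these two points carefully, and hence the theorem, is carried out in~\cite[Theorem~6.4]{BM07}.
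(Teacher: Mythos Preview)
The paper does not supply its own proof of this theorem: it is stated with a citation to \cite[Theorem~6.4]{BM07} and used as a black box. Your outline is therefore not competing with an argument in the paper but with the one in Berger--Moerdijk, and it follows that argument faithfully: identify the strict fibre product of algebra categories with $\Alg_{\mathcal{R}}(\M)$ for the iterated pushout $\mathcal{R}=W(\cP^1)\sqcup_{W(\cP^0)}\cdots\sqcup_{W(\cP^0)}W(\cP^1)$, reduce via Proposition~\ref{extensionrestrictionofalgebras} to showing that the comparison $\theta\colon\mathcal{R}\to W(\cP^n)$ is a weak equivalence of admissible $\Sigma$-cofibrant operads, and check this last point by comparing both sides to $\cP^n$ using left properness of the fixed-object model structure and the gluing lemma.

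Two points deserve slightly more care than you give them. First, the identification of the fibre-product model structure with the transferred model structure on $\Alg_{\mathcal{R}}(\M)$ is not automatic: you need that weak equivalences and fibrations on both sides are created in $\M^{C^n}$, which in turn uses that the forgetful functors $\Alg_{W(\cP^i)}(\M)\to\M^{C^i}$ are jointly conservative along the pushout decomposition of $C^n$. Second, and more seriously, the left properness you invoke is only available for operads with a \emph{fixed} object set, whereas the coface maps $\cP^0\to\cP^1$ and $W(\cP^0)\to W(\cP^1)$ change objects. You do address this by pushing everything forward to $C^n$, but then the claim that the resulting maps are cofibrations requires the compatibility of $\alpha_*$ with the $W$-construction for injective $\alpha$ (which the paper records) together with an explicit analysis of the free extension; in \cite{BM07} this is handled via the filtration of the relative $W$-construction rather than by a direct appeal to Reedy cofibrancy of $W(\cP^\bullet)$. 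Your sketch is correct in spirit, but filling in that cofibrancy step is exactly where the work in \cite{BM07} lies.
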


Given a morphism $f:M\ra N$ between $\cP$-$\Q$-bimodules we know that we can construct a $\cP$-$\cP$-bimodule $\End(M,N)$ as in Theorem~\ref{invariance}. As in the previous section, we know that we can consider these bimodules as multicategories under $\cP+\cP$.  Our first proposition says that, under certain hypotheses, the endomorphism bimodule $\End(M,N)$ is a $\cP^1$-algebra in the category $\cP+\cP\searrow\multi(\C)$. \

\begin{prop}\label{E(f)1}Assume that $f:M\rightarrow N$ is a weak equivalence between cofibrant and fibrant pointed $\cP$-$\Q$-bimodules. Then there exists a $\cP^{1}$-algebra structure on $\End(M,N)$ in the homotopy category of $\cP^0+\cP^0\searrow\multi(\C)$.\end{prop}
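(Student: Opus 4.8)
The plan is to follow the template of the proof of Theorem~\ref{MoritaTheorem}, using Theorem~\ref{invariance} to transport algebra structures along weak equivalences of modules, and then to recognize a $\cP^{1}$-structure from the Markl description, according to which a $\cP^{1}$-algebra is nothing but a morphism of $\cP$-algebras together with the ``mixed'' operations such a morphism forces. First I would arrange that $M$ and $N$ are cofibrant and fibrant as right $\Q$-modules — a cofibrant fibrant pointed $\cP$-$\Q$-bimodule is in particular cofibrant and fibrant as a right $\Q$-module, by the forgetful-functor argument already used in the proof of Theorem~\ref{MoritaTheorem} — and then form the $\cP$-$\cP$-bimodule $\End(M,N)=\End_{\Q}(M,N)$ with $n$-ary operations $\Hom_{\M_{\Q}}(M^{\otimes n},N)$, together with the cospan
\begin{equation*}
\End_{\Q}(M)\xrightarrow{f_{*}}\End_{\Q}(M,N)\xleftarrow{f^{*}}\End_{\Q}(N),
\end{equation*}
where $f_{*}$ is post-composition with $f$ and $f^{*}$ is pre-composition with the powers $f^{\otimes n}$. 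Because $\otimes$ preserves weak equivalences between cofibrant objects (Ken Brown's lemma) and $N$ is fibrant, both $f_{*}$ and $f^{*}$ are weak equivalences; equivalently, in the pullback square with corners $\End_{\Q}(f)$, $\End_{\Q}(M)$, $\End_{\Q}(N)$, $\End_{\Q}(M,N)$ every edge is a weak equivalence, which is the content of Theorem~\ref{invariance}.

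Next I would transport the algebra structures and pass to multicategories. The $\cP$-$\Q$-bimodule structures on $M$ and $N$ are $\cP$-algebra structures $\cP\to\End_{\Q}(M)$ and $\cP\to\End_{\Q}(N)$; since $f$ is a bimodule map they are compatible with $f_{*}$ and $f^{*}$ up to the homotopies supplied by Theorem~\ref{invariance}, so they lift to a single $\cP$-algebra structure $\cP\to\End_{\Q}(f)$ and induce on $\End(M,N)$ the $\cP$-$\cP$-bimodule structure of the statement (left action through $\End_{\Q}(N)$, right action through $\End_{\Q}(M)$). Applying the enveloping functor $E$ — which commutes with the relevant composition and unit maps, exactly as in the proof of Proposition~\ref{Hochschildgood} — then places $E(\End(M,N))$ in $\cP^{0}+\cP^{0}\searrow\multi(\C)$, the two maps from $\cP^{0}$ being the source and target algebra structures.

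Now I would assemble the $\cP^{1}$-structure. By the Markl description, a $\cP^{1}$-algebra is precisely a morphism $B_{0}\to B_{1}$ of $\cP$-algebras, with the operations $\cP(n)\otimes B_{0}^{\otimes a}\otimes B_{1}^{\otimes b}\to B_{1}$ ($a+b=n$) determined by the morphism and the $\cP$-action on $B_{1}$. I would take $B_{0}=\End_{\Q}(M)$, $B_{1}=\End_{\Q}(N)$, with the morphism represented in $\Ho(\cP^{0}+\cP^{0}\searrow\multi(\C))$ by the zig-zag $E(\End_{\Q}(M))\xrightarrow{\sim}E(\End(M,N))\xleftarrow{\sim}E(\End_{\Q}(N))$ obtained by applying $E$ to the cospan above; the mixed operations are the maps produced by $f_{*}$ and $f^{*}$. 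The coherence diagrams these data must satisfy commute by the naturality argument of Lemma~\ref{natural}: it is enough to check naturality on a generating set of operations of $\cP$, where it reduces to the compatibility of $f$ with composition in $M$ and in $N$. This exhibits $\End(M,N)$ as a $\cP^{1}$-algebra in $\Ho(\cP^{0}+\cP^{0}\searrow\multi(\C))$.

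The main obstacle is coherence, and it is the reason the statement is confined to the homotopy category: $f_{*}$ and $f^{*}$ are not inverse on the point-set level, so the ``morphism $\End_{\Q}(M)\to\End_{\Q}(N)$'' — and with it the mixed $\cP^{1}$-operations — only exists after formally inverting weak equivalences, and making the operations honestly compatible requires the homotopy-coherent transport of algebra structures of Theorem~\ref{invariance} together with the ``naturality on generators suffices'' principle of Lemma~\ref{natural}. A secondary point to watch is that $E$ must be applied to a cofibrant resolution of $\End(M,N)$ — for instance a bar/Hochschild-type resolution as in Proposition~\ref{Hochschildgood} — so that one is genuinely computing $\mathbb{L}E$ and the $\cP^{1}$-structure descends correctly.
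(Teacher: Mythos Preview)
Your setup --- forming the cospan $\End_{\Q}(M)\to\End_{\Q}(M,N)\leftarrow\End_{\Q}(N)$, invoking Theorem~\ref{invariance} to see that all edges of the pullback square are equivalences, and lifting the two $\cP$-structures to $\cP\to\End_{\Q}(f)$ --- matches the paper exactly. The divergence comes at the step where you actually produce the $\cP^{1}$-structure, and there your argument has a gap.

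The paper's key observation is that the lift $\cP\to\End_{\Q}(f)$ you already have \emph{is} the $\cP^{1}$-structure: by the very definition of $\End(f)$ as a pullback, a map into it is a pair of $\cP$-algebra structures making $f$ a $\cP$-algebra map, which is literally the Markl description of a $\cP^{1}$-algebra. So one has an honest, point-set map $\cP^{1}\to\End(f)$ with no homotopies to manage. The only remaining work is to pass from $\End(f)$ to $\End(M,N)$, and the paper does this by the lifting argument of Theorem~\ref{invariance}: $\cP^{1}$ is cofibrant under $\cP+\cP$, $\End(M,N)$ is fibrant, and the square with vertices $\cP+\cP$, $\cP^{1}$, $\End(M,N)$ admits a diagonal filler $W(\cP^{1})\to\End(M,N)$.

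By contrast, you try to assemble the $\cP^{1}$-structure by hand from the Markl description, taking $B_{0}=\End_{\Q}(M)$, $B_{1}=\End_{\Q}(N)$ and a zig-zag between them. This conflates two different things: a $\cP^{1}$-algebra whose \emph{components} are $\End(M)$ and $\End(N)$, versus a map $\cP^{1}\to\End(M,N)$ in multicategories under $\cP+\cP$, which is what the proposition asks for. Your appeal to Lemma~\ref{natural} is also misplaced: that lemma governs $k$-natural transformations in the internal-hom multicategory $\underline{\Hom}(\cP,\Q)$, not the coherence of a map out of $\cP^{1}$. The coherence you are worried about is handled not by checking generators but by the single lifting step above, which is why the paper's proof is both shorter and avoids the ``only in the homotopy category'' bookkeeping you flag as the main obstacle.
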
 

\begin{proof}By our assumption that $M$ and $N$ are cofibrant and fibrant right quasi-free $\cP$-$\Q$-bimodules we know that $\mathbb{L}E(M)=E(M)$ and $\mathbb{L}E(N)=N$. Moreover, we know that $E$ preserves weak equivalences between cofibrant objects, so that $E(f)$ is a weak equivalence. 

Now, by Theorem~\ref{invariance} we know that exists a pullback square (of right $\Q$-modules): 
\begin{equation}
\xymatrix{
\End(f)\ar[r] \ar[d] &  \End(M) \ar[d] \\
\End(N)\ar[r] & \End(M,N).
}\end{equation}The object $\End(f)$ is characterized by the fact that a morphism $\cP\ra\End(f)$ is equivalent to giving a $\cP$-algebra structure on $\End(N)$, a $\cP$-algebra structure on $\End(M)$in such a way that $f$ is a morphism of $\cP$-algebras. In other words, there exists a map $\cP^{1}\ra \End(f)$ and thus a $\cP$-$\cP$-bimodule via the $\cP$-$\cP$-bimodule structure on $\cP^{1}$.       

The collection $\End(M,N)$ also has a $\cP^0$-$\cP^0$-bimodule, via the maps $\cP\ra\End(M)\ra\End(M,N)$ and $\cP\ra\End(N)\ra\End(M,N)$. It is also the case that, by our assumptions that $M$ and $N$ are cofibrant and fibrant that $\End(M,N)$ is cofibrant, and so we can lift $\End(M,N)$ by $E$ to an object in $\cP^0+\cP^0\searrow\multi(\C)$. \ 

Now, we can consider either composite $$\cP^{1}\ra\End(f)\ra\End(M)\ra\End(M,N)$$or$$\cP^{1}\ra\End(f)\ra\End(N)\ra\End(M,N)$$ as a morphism between $\cP^1$ to $\End(M,N)$ in $(\cP^0+\cP^0)\searrow\multi(\C)$ and, in particular, we have the following diagram (as objects in $\cP+\cP\searrow \multi(\C)$).
$$
\xymatrix{
\cP+ \cP \ar[r]^{\alpha+\beta} \ar[d] & \End(M,N)  \\
\cP^1 &
}.$$ Now, since $\End(M,N)$ is fibrant and $\cP^1$ is cofibrant in this picture, we can repeat the proof of Theorem~\ref{invariance} to show that the $\cP$-$\cP$-bimodule structure on $\cP^{1}$ maps to a $\cP$-$\cP$-bimodule structure on $\End(M,N)$ in a homotopy coherent way. In particular, this tells us that $E(f)$ is represented by:
$$\xymatrix{
W(\cP)+ W(\cP) \ar[r]^{\alpha+\beta} \ar[d] & \End(A,B)  \\
W(\cP^1)\ar[ur]_{E(f)} &. }$$\end{proof} 

\begin{theorem}\label{cosimplicial} Assume that $\cP$ is $\Sigma_*$-cofibrant and $\Q$ is fibrant. Then $\Alg_{W\cP^\bullet}(\M_{\Q})$ is weakly homotopy equivalent to $\Map^{h}(\cP,\Q)$.\end{theorem}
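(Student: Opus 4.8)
The plan is to combine the main theorem of Section~\ref{Moritatheory} with the Segal-type decomposition of Theorem~\ref{qe1}. By the main theorem (Theorem~\ref{MoritaTheorem} together with its consequence that $\Map^h(\cP,\Q)$ is the moduli space of right quasi-free $\cP$-$\Q$-bimodules), it suffices to identify the classifying space of the simplicial category $\Alg_{W\cP^\bullet}(\M_{\Q})$ — taken through the moduli categories and, as in the main theorem, on the right quasi-free objects — with the moduli space of right quasi-free $\cP$-$\Q$-bimodules. Before starting one checks the hypotheses: since $\cP$ is $\Sigma_*$-cofibrant each $\cP^n$ is $\Sigma$-cofibrant — it is assembled from copies of $\cP^1$ by the pushout presentation~\eqref{n-morphisms}, and $\cP^1$ is $\Sigma$-cofibrant whenever $\cP$ is, by its explicit operations — so every $W(\cP^n)$ is a cofibrant admissible operad and Theorem~\ref{qe1} applies to $W(\cP^\bullet)$; and since $\Q$ is fibrant we may, using the fibrant replacement functor that fixes objects (Lemma~\ref{fibrant objects}), arrange all the endomorphism bimodules occurring below to be fibrant without disturbing their object sets.

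The first step is to recognize $\Alg_{W\cP^\bullet}(\M_{\Q})$ as a homotopy-coherent model for the nerve of the category $\Alg_{\cP}(\M_{\Q})$ of $\cP$-$\Q$-bimodules. In cosimplicial degree $0$ this is tautological ($\Alg_{W\cP^0}(\M_{\Q})=\Alg_{W\cP}(\M_{\Q})$), and in degree $n$ Theorem~\ref{qe1} exhibits $\Alg_{W\cP^n}(\M_{\Q})$ as the homotopy fibre product of $n$ copies of $\Alg_{W\cP^1}(\M_{\Q})$ over $\Alg_{W\cP^0}(\M_{\Q})$, i.e.\ the Segal maps are weak equivalences; combined with the observation that $\Alg_{\cP^\bullet}(\M)$ is the nerve of $\Alg_{\cP}(\M)$, this says precisely that $\Alg_{W\cP^\bullet}(\M_{\Q})$ is the homotopy-coherent nerve of $\Alg_{\cP}(\M_{\Q})$. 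The morphisms that appear — maps of $\cP$-$\Q$-bimodules, classified at the homotopy level by $\Alg_{W\cP^1}(\M_{\Q})$ — are the content of Proposition~\ref{E(f)1}, which attaches to a weak equivalence $f:M\xrightarrow{\sim}N$ of cofibrant--fibrant quasi-free $\cP$-$\Q$-bimodules a $\cP^1$-algebra structure on $\End_{\Q}(M,N)$ in the homotopy category of $(\cP+\cP)\searrow\multi(\C)$, compatibly with the two faces ($\End_{\Q}(M)\simeq\Q$ and $\End_{\Q}(N)\simeq\Q$) and the degeneracy ($\End_{\Q}(M,M)=\End_{\Q}(M)$); the invariance theorem for algebra structures along weak equivalences (Theorem~\ref{invariance}) makes this natural and supplies the coherence needed to match all faces and degeneracies in higher cosimplicial degree.

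The second step passes from this nerve to the moduli space. Taking, in each cosimplicial degree, the nerve of the subcategory of weak equivalences, the bisimplicial set $(m,n)\mapsto N_{m}w\bigl(\Alg_{W\cP^n}(\M_{\Q})\bigr)$ is precisely the classification diagram of the category with weak equivalences $\bigl(\Alg_{\cP}(\M_{\Q}),w\bigr)$ (an $(m,n)$-cell is a grid of bimodules with all arrows in the $m$-direction weak equivalences), so its diagonal is weakly equivalent to $N\bigl(w\Alg_{\cP}(\M_{\Q})\bigr)$ by the standard classification-diagram argument (in the spirit of the zig-zag descriptions of Section~\ref{MappingSpaces}). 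Restricting throughout to the right quasi-free objects — which is compatible on both sides, since a bimodule is quasi-free precisely when its underlying right $\Q$-module is weakly equivalent to $\Q$, and the face restrictions of a quasi-free $\cP^1$-bimodule are again quasi-free by Proposition~\ref{E(f)1} — we obtain that the classifying space of $\Alg_{W\cP^\bullet}(\M_{\Q})$ is the moduli space of right quasi-free $\cP$-$\Q$-bimodules, which by the main theorem is weakly homotopy equivalent to $\Map^h(\cP,\Q)$.

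The main obstacle is the first step: verifying that $\Alg_{W\cP^\bullet}(\M_{\Q})$ really is the homotopy-coherent nerve of $\Alg_{\cP}(\M_{\Q})$, i.e.\ that the $W$-resolution is homotopically sound at every level and compatible with the cosimplicial structure. This is where the hypotheses of Theorem~\ref{qe1} (admissibility of all $W(\cP^n)$) and the left properness of the category of simplicial multicategories with the object set attached to each $\cP^n$ are used, and where Proposition~\ref{E(f)1} and the homotopy-coherent lifting of Theorem~\ref{invariance} do the real work — ensuring that an $n$-string of weak equivalences of quasi-free bimodules lifts, coherently in $n$, to an object of $\Alg_{W\cP^n}(\M_{\Q})$, and that this lift respects the cosimplicial face and degeneracy maps. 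Once this is in hand, the second step — the passage to moduli spaces via the classification diagram, together with the comparison with $\Map^h(\cP,\Q)$ supplied by the main theorem — is essentially formal.
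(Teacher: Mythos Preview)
Your proposal is essentially the same approach as the paper's own proof: both reduce to the main Morita theorem identifying $\Map^h(\cP,\Q)$ with the moduli space of right quasi-free $\cP$-$\Q$-bimodules, and then invoke Proposition~\ref{E(f)1} together with the Segal-type Quillen equivalence of Theorem~\ref{qe1} (which is \cite[6.4]{BM07}) to pass between that moduli space and $\Alg_{W\cP^\bullet}(\M_{\Q})$. The paper's argument is extremely terse (it simply names these ingredients and an explicit comparison map $[\underline{\Hom}]_{1}(W\cP,\Q)\to{}_{W\cP^{\bullet}}\M^*_{\Q}$), whereas you package the same comparison via a classification-diagram/bisimplicial argument and spell out the hypothesis checks; but the substance and the key lemmas are identical.
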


\begin{proof}By Theorem~\ref{maintheorem} we know that ${}_{W(\cP)}\M_{\Q})$ is weakly homotopy equivalent to $\Map^{h}(\cP,\Q)$. We also know that there is a natural morphism of simplicial sets $[\underline{\Hom}]_{1}(W\cP,\Q)\lra {}_{W\cP^{\bullet}}\M^*_{\Q}$ which sends a string $A_1\ra ...\ra A_{n+1}$ to the corresponding $W(\cP^{n})$-algebra in ${}_{W\cP^{\bullet}}\M^*_{\Q}$. The remainder of the proof follows from Proposition~\ref{E(f)1} and Theorem~\cite[6.4]{BM07}.\end{proof}

The theorem basically provides us with a specific \emph{framing} (~\cite[Chapter 5]{Hovey}). As a consequence we know that when $K$ is a finite simplicial set, there exist maps in $\Ho(\sSets)$ $$\cP^{\mathbb{R}K}\longrightarrow \Map^h(K,\Alg_{W\cP^{\bullet}}(\Mod_{Q})) \qquad \cP^{\mathbb{R}K}\longrightarrow Map(K,\Map^{h}(\cP,\Q))$$ are in fact isomorphisms. Moreover, it implies that for any finite $K$ in $\Ho(\sSets)$ and any $\Sigma_*$-cofibrant multicategory $\cP$, we get an isomorphism$$[K,\Alg_{W\cP^{\bullet}}(\M_{\Q})] \longrightarrow [K,\Map^h(\cP,\Q)].$$

\begin{corollary}The monoidal category $Ho(\multi(\C))$ is closed. Furthermore, for any two simplicial multicategories $\cP$ and $\Q$ there is a natural isomorphism in $Ho(\multi(\C))$ $$\mathbb{R}\underline{\Hom}(\cP,\Q)\sim {}_{\cP}\M_{\Q}.$$ \end{corollary}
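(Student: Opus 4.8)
The plan is to derive the point-set adjunction $\multi(\cP\otimes_{BV}\Q,\R)\cong\multi(\cP,\underline{\Hom}(\Q,\R))$ — together with its enrichment $\underline{\Hom}(\cP\otimes_{BV}\Q,\R)\cong\underline{\Hom}(\cP,\underline{\Hom}(\Q,\R))$ — and then to recognise the derived right adjoint as the bimodule multicategory. First I would pin down the models. The Boardman--Vogt construction $W(-)$ is a cofibrant replacement that fixes objects, and (as noted after the model-structure theorem) there is a fibrant replacement $(-)_f$ that also fixes objects, so I set $\cP\otimes^{\mathbb{L}}_{BV}\Q:=W(\cP)\otimes_{BV}\Q$, as in the text, and $\mathbb{R}\underline{\Hom}(\cP,\Q):=\underline{\Hom}(W(\cP),\Q_f)$. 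Homotopy invariance of $\otimes^{\mathbb{L}}_{BV}$ follows because $W(-)$ fixes objects, the coproduct $\cP+\Q$ sends object-fixing weak equivalences to weak equivalences, and $\otimes_{BV}$ is a filtration-preserving quotient of $+$. For $\mathbb{R}\underline{\Hom}$, the linear part $[\underline{\Hom}]_1(W\cP,\Q_f)$ is the simplicial mapping space $\Map^h(\cP,\Q)$, which is homotopy invariant, while the higher operations — the $k$-natural transformations of Definition~\ref{Homs} — are controlled by the same mapping spaces through the iterated tensor--hom adjunction, so their invariance reduces, via Proposition~\ref{E(f)1}, to invariance of the relevant $\Map^h$'s.

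Next I would establish the derived adjunction $[\cP\otimes^{\mathbb{L}}_{BV}\Q,\R]\cong[\cP,\mathbb{R}\underline{\Hom}(\Q,\R)]$ in $Ho(\multi(\C))$. Since $\multi(\C)$ is not a monoidal model category this cannot be read off from the usual derived-functor formalism; instead I would assemble it from the mapping-space computations of the previous two sections. Applying $\Map^h(\cP,-)$ to $\mathbb{R}\underline{\Hom}(\Q,\R)$ and using the Main Theorem (that $\Map^h(\cP,\Q)$ is the moduli space ${}_{\cP}\M^*_{\Q}$) together with Theorem~\ref{cosimplicial} and the framing isomorphisms that follow it, one obtains a natural weak equivalence $\Map^h(\cP,\mathbb{R}\underline{\Hom}(\Q,\R))\simeq\Map^h(\cP\otimes^{\mathbb{L}}_{BV}\Q,\R)$; passing to $\pi_0$ gives the hom-set bijection, hence that $Ho(\multi(\C))$ is closed with internal hom $\mathbb{R}\underline{\Hom}$, the naturality in all three variables being inherited from the point-set adjunction.

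Finally I would identify $\mathbb{R}\underline{\Hom}(\cP,\Q)$ with ${}_{\cP}\M_{\Q}$. The Quillen equivalence $[\underline{\Hom}]_1(\cP,\M_{\Q})\simeq{}_{\cP}\M_{\Q}$ established above enriches to an equivalence of the ambient multicategories, namely $\underline{\Hom}(\cP,N)$ for $N$ the underlying multicategory of $\M_{\Q}$ versus the bimodule multicategory; using $\End_{\Q}(\Q)\cong\Q$, the multicategory $\underline{\Hom}(\cP,\Q)$ appears inside $\underline{\Hom}(\cP,N)$ as the full sub-multicategory on bimodules whose underlying right $\Q$-module is $\Q$, and the Main Theorem identifies these, up to homotopy, with the right quasi-free bimodules ${}_{\cP}\M^*_{\Q}$; the inclusion ${}_{\cP}\M^*_{\Q}\hookrightarrow{}_{\cP}\M_{\Q}$ induces an equivalence of moduli spaces because Theorem~\ref{MoritaTheorem} exhibits every pointed bimodule as zig-zag-equivalent to a quasi-free one. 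This matches the moduli spaces; to promote it to an isomorphism of objects of $Ho(\multi(\C))$ I would compare the multicategory structures level-wise through the cosimplicial operad $W\cP^\bullet$, using Proposition~\ref{E(f)1} to match a $k$-natural transformation of $\underline{\Hom}$ with a $k$-ary operation of the bimodule multicategory, and then invoke the framing isomorphisms to turn the level-wise equivalence into an isomorphism in $Ho(\multi(\C))$.

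The main obstacle is exactly the derived adjunction: with no monoidal model structure available it must be built by hand from the bimodule description, Theorem~\ref{cosimplicial}, Proposition~\ref{E(f)1} and the framing isomorphisms, and verifying compatibility of the tensor--hom adjunction with the bimodule picture on the higher ($k\ge 2$) operations — not merely on the linear part — is the step that genuinely requires Proposition~\ref{E(f)1} and Theorem~\ref{qe1}. A persistent secondary nuisance is the bookkeeping of object sets: every replacement and comparison functor must be arranged to fix objects (compare the discussion of $\alpha_*W(\Q)\lra W(\alpha_*\Q)$), since this is precisely what keeps $\otimes^{\mathbb{L}}_{BV}$ and $\mathbb{R}\underline{\Hom}$ homotopy-meaningful.
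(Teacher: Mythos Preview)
The paper offers no explicit proof of this corollary: it is stated immediately after Theorem~\ref{cosimplicial} and the framing paragraph, and is meant to be read as a formal consequence of that framing together with the Main Theorem. So there is no detailed argument to compare against; your proposal is an attempt to supply one. That said, there are two genuine gaps.

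First, your claim that the inclusion ${}_{\cP}\M^*_{\Q}\hookrightarrow{}_{\cP}\M_{\Q}$ induces an equivalence of moduli spaces is not supported by Theorem~\ref{MoritaTheorem}, and is in fact false as stated. Theorem~\ref{MoritaTheorem} goes in one direction only: \emph{if} $M$ is right quasi-free, \emph{then} $\Q\to\mathbb{L}E(M)$ is a weak equivalence. It does not assert that an arbitrary pointed bimodule is zig-zag equivalent to a quasi-free one, and indeed ``quasi-free'' is the condition $\Q\to\End_{\Q}(M)$ is a weak equivalence, which picks out a union of components of the moduli space rather than all of it. The paper's notation is inconsistent here (compare Theorem~A with the Main Theorem in \S\ref{Moritatheory}), and the ${}_{\cP}\M_{\Q}$ in the corollary should almost certainly be read as the quasi-free bimodules. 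Your argument should simply stop at the identification with ${}_{\cP}\M^*_{\Q}$ furnished by the Main Theorem and Theorem~\ref{cosimplicial}; the extra step to all pointed bimodules is both unnecessary and wrong.

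Second, the derived adjunction $\Map^h(\cP,\mathbb{R}\underline{\Hom}(\Q,\R))\simeq\Map^h(\cP\otimes^{\mathbb{L}}_{BV}\Q,\R)$ is asserted but not actually produced. You invoke the Main Theorem and Theorem~\ref{cosimplicial}, but neither of these mentions $\otimes_{BV}$; what they give is a cosimplicial framing of $\cP$ and an identification of $\Map^h(\cP,\Q)$ with a moduli space of bimodules. Passing from this to a derived tensor--hom adjunction requires, at minimum, showing that $\underline{\Hom}(W(\cP),-)$ takes fibrations between fibrant objects to fibrations and weak equivalences between fibrant objects to weak equivalences (so that the point-set adjunction descends), and the paper explicitly warns that $\multi(\C)$ is \emph{not} a monoidal model category. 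Your appeal to ``$\otimes_{BV}$ is a filtration-preserving quotient of $+$'' does not bridge this: preservation of object-fixing weak equivalences is known for $+$, not for the further quotient. You correctly flag this as the main obstacle, but the proposal does not overcome it; the honest route is the one the paper gestures at, namely to use the framing of Theorem~\ref{cosimplicial} directly to define $\mathbb{R}\underline{\Hom}$ as a cotensor and then read off closedness from the standard framing formalism, rather than to try to derive the point-set $\otimes_{BV}$--$\underline{\Hom}$ adjunction.
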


\section{Algebras over Multicategories} For a given (symmetric) multicategory $\cP$, a $\cP$-algebra $A$ is an object in the product category $\C^{\obj(\cP)}$ together with a left $\cP$-action, i.e. a collection of $\C$-morphisms\begin{equation*}\alpha_{x_1,...,x_n;x}:\cP(x_1,...,x_n;x)\otimes A(x_1)\otimes...\otimes A(x_n)\rightarrow A(x),\end{equation*}satisfying axioms for associativity, units and equivariance. A $\cP$-algebra homomorphism $f:A\rightarrow B$ is a family of $\C$-morphisms \begin{equation*}\{f:A(x_i)\rightarrow B(x_1)\}_{x_i\in\cP}\end{equation*}which fit into the following commutative diagram: 
$$\begin{CD}\\
\cP(x_1,...,x_n;x)\otimes A(x_1)\otimes...\otimes A(x_n) @>>> A(x)\\
@Vid\otimes f_{x_1}\otimes...\otimes f_{x_n}VV      @Vf_{x}VV\\
\cP(x_{1},...,x_n;x)\otimes B(x_{1})\otimes...\otimes B(x_n) @>>> B(x)\\
\end{CD}.$$ We denote the resulting category by $\Alg_{\cP}(\C)$.\\

Equivalently, a \emph{$\cP$-algebra} structure on an object $A\in\C^{\obj(\cP)}$, is a multifunctor $\cP\rightarrow\End(A)$ which fixes objects. The classifying object, $\End(A),$ is defined by \begin{equation*}\End(A)(x_1,...,x_n;x):=\Hom_{\C}(A(x_1)\otimes...\otimes A(x_{n}),A(x))\end{equation*} with composition (respectively $\Sigma_n$-actions) induced by substitution (respectively permutation) on the source factors. This object is called the \emph{endomorphism multicategory} of $A\in\C^{\obj(\cP)}$.

\begin{example} If $\cP$ is an ordinary category, then the category of $\cP$-algebras is the ordinary functor category $[\cP,\Sets]$. If $\cP$ is a \emph{strict monoidal category} then an algebra of the underlying multicategory is a lax monoidal functor from $\cP$ to $(\Sets, \times, 1)$. \end{example}

\begin{example} For each multicategory $\cP$ there exists an algebra $A$ defined by taking $A(x)$ to be the set $\cP(-;x)$ of arrows in $\cP$ from the empty sequence into $x$.  When $\cP$ is the multicategory of modules over some commutative ring $R$, this $\cP$-algebra is just the forgetful functor from $R$-modules to $\Sets$.\end{example}

\begin{example}There exists a multicategory $Op_{\C}$, whose category of algebras is the category of operads in $\C$. The set of objects in this case is the natural numbers $\mathbb{N}$.\

The elements of $Op(n_1,\dots,n_k;n)$ are equivalence classes of triples $(T,\sigma,\tau)$ where $T$ is a planar rooted tree with $n$input edges and $k$ vertices, $\sigma$ is a bijection $\{1,\dots,k\} \rightarrow V(T)$ (i.e. the set of vertices of $T$) with the property that the vertex $\sigma(i)$ has valence $n_i$ (i.e. $n_i$ input edges), and $\tau$ is a bijection $\{1,\dots,n\}\rightarrow in(T)$, the set of input edges of $T$.  Two such triples $(T,\sigma,\tau),(T',\sigma',\tau')$ represent the same element of $Op(n_1,\dots,n_k;n)$ if there is a (planar) isomorphism $\varphi:T\rightarrow T'$ with $\varphi \circ \tau=\tau'$ and $\varphi \circ \sigma=\sigma'$.\

Any $\alpha\in\Sigma_k$ induces a map $\alpha^{*}:Op(n_1,\dots,n_k;n)\longrightarrow Op(n_{\alpha(1)},\dots,n_{\alpha(k)};n)$ sending (the equivalence class of) $(T, \sigma, \tau)$ to $(T, \sigma\alpha, \tau)$.  The identity element $1_n \in Op(n;n)$ is represented by the tree $t_n$ (the corolla with $n$ leaves) whose inputs are numbered $1,\dots,n$ from left to right with respect to the planar structure.\

The composition product is defined as follows: given $(T,\sigma,\tau)$ as above, and $k$ other such $(T_1,\sigma_1,\tau_1),\dots,(T_k,\sigma_k,\tau_k)$, with $n_1,\dots,n_k$ inputs and $p_1,\dots,p_k$ vertices respectively, one obtains a new planar rooted tree $T'$ by replacing the vertex $\sigma(i)$ in $T$ by the tree $T_i$, identifying the $n_i$ input edges of $\sigma(i)$ in $T$ with the $n_i$ input edges of $T_i$ via the bijection $\tau_i$ (the $l$-th input edge of $\sigma(i)$ in the planar order is matched with the input edge $\tau_i(l)$ of $T_i$).\

The vertices of the new tree $T'$ are numbered in the following order: first the vertices of $T_{\sigma(1)}$ in the order given by $\sigma_1$, then the vertices in $T_{\sigma(2)}$ in the order given by $\sigma_2$, etc.  In other words, the map $\{1,\dots, p_1+\dots +p_k\}\rightarrow V(T')$ is given by $(\sigma_1\times\cdots\times\sigma_k)\circ\sigma(p_1,\dots,p_k)$ where $\sigma(p_1,\dots,p_k)$ permutes the blocks of size $p_i$. The new tree $T'$ still has $n$ input edges, which are ordered as given by $\tau$ and the identifications given by the $\tau_i$.  Notice that $Op(n_1;n)=\Sigma_n$ if $n_1=n$, and $Op(n_i,n)=\phi$ otherwise.  More precisely, $Op(n,n)$ consists of pairs $(t_n,\tau)$ where $t_n$ is the tree above and $\tau$ is a numbering of its inputs.  The composition product of $Op$ in particular gives a map $Op(n,n)\times Op(n,n)\longrightarrow Op(n,n)$ which sends $((t_n,\tau),(t_n,\rho))$ to $(t_n,\rho\tau)$, so that $Op(n;n)$ is identified with the \emph{opposite group} of $\Sigma_n$.\

The $Op$-algebras are exactly the operads in sets. Applying the strong symmetric monoidal functor $\Sets\rightarrow\C$ gives $Op_{\C}$ whose algebras are exactly the operads in $\C$. \end{example}

Let $\M$ be a $\C$-model category and $\cP$ a multicategory enriched in $\C$. Then there is an adjoint pair $$\xymatrix{F_{\cP} : \M^{\obj(\cP)} \ar@<3pt>[r] & \ar@<3pt>[l] \Alg_{\cP}(\M) : U_{\cP},}$$where $F_{\cP}$ is the free $\cP$-algebra functor defined by \begin{equation}\label{freealgebra}F_{\cP}(A)(x)=\coprod_{n\ge 0}\left(\coprod_{x_1,\dots, x_n\in \obj(\cP)}\cP(x_1,\ldots, x_n;x)\otimes_{\Sigma_n} A(x_1)\otimes\cdots\otimes A(x_n) \right)\end{equation} for every $A=(A(x))_{x\in\obj(\cP)}$ in $\M^{\obj(\cP)}$, and $U_{\cP}$ is the forgetful functor. If a simplicial multicategory $\cP$ is $\Sigma$-cofibrant, i.e. for each $x_1,...,x_n;x$ $\cP(x_1,...,x_n;x)$ is a cofibrant object in $\M$ then the model structure on $\M^{\obj(\cP)}$ is transferred to $\Alg_{\cP}(\M)$ along the free-forgetful adjunction (see~\cite{BM07}). 

\subsection{Endomorphism Modules for Algebras} Let us denote by $\M^{S}$ the product category of copies of $\M$ indexed by the set $\obj(\cP)=S$. For each $A=\{A(x)\}_{x\in S}$ we define a simplicial multicategory $\End(A)\in\multi(\C)_{S}$\begin{equation}\label{endomrphism operad}\End(A)(x_1,\ldots, x_n; x)=\Hom\nolimits_{\mathcal{C}}(A(x_1)\otimes\cdots\otimes A(x_n), A(x)),\end{equation} which forms a classifying object for $\cP$-algebra structures on $A$.\footnote{ We take $A(x_1)\otimes\cdots\otimes A(x_n)$ is taken to be the unit $\unit$ if $n=0$.}The monoidal product is ordinary composition in $\M$ and the $\Sigma_n$\nobreakdash-actions are given by permuting the source factors. The object $\End(A)$ is called the \emph{endomorphism multicategory} or \emph{endomorphism $S$-colored operad} of $A\in\C^{\obj(\cP)}$.\

We can define a similar object which provides a classifying object for the $\cP$-algebra homomorphisms $f\colon A\longrightarrow B$ in $\C^{S}$, i.e., an $S$\nobreakdash-indexed family of maps $\{f_x\colon A(x)\longrightarrow B(x)\}_{x\in S}$ in $\C$, there is an endomorphism object $\End(\mathbf{f})$, defined as the pullback of the following diagram of collections:
\begin{equation}
\xymatrix{
\End(f)\ar[r] \ar[d] &  \End(A) \ar[d] \\
\End(B)\ar[r] & \End(A,B).
}
\label{endomorphism object of a map}\end{equation} The collection $\End(A,B)$ is called an \emph{endomorphism module between $\cP$-algebras $A$ and $B$} and is defined\begin{equation}\End(A,B)(x_1,\ldots,x_n;x)=\Mor\nolimits_{\mathcal{C}}(A(x_1)\otimes\cdots\otimes A(x_n), B(x)).\end{equation}There are natural maps $\End(A)\longrightarrow \End(A,B)$ and $\End(B)\longrightarrow\End(A,B)$ which come from composing with $f$ on either side.\footnote{Set theoretically, $\End_f(n)=\{(\phi,\psi)\in\End_A(n)\times\End_B(n)\,|\,f\phi=\psi f^{\otimes n}\}$.} The collection $\End(f)$ inherits a multicategory structure from $\End(A)$ and $\End(B)$ (cf. \cite[Theorem~3.5]{BM03}). It also turns out that $\End(A,B)$ forms a left $\End(B)$-module and a right $\End(A)$ module. Moreover, these actions are compatible, giving us our first example of an operadic bimodule. We will discuss the endomorphism modules more in the next section.\footnote{If $P$ is a non\nobreakdash-symmetric multicategory, then endomorphism objects are defined in the same way, by forgetting the symmetric group action on $\End(A)$.}\

We choose the definition of $\End(f)$ so that it will provide a classifying object for $\cP$-algebra homomorphisms $f\colon A\longrightarrow B.$ More specifically, a multifunctor $\cP\longrightarrow\End(f)$ is equivalent to providing a $\cP$\nobreakdash-algebra structure on $A$ and a $\cP$\nobreakdash-algebra structure on $B$ in such a way that $f$ is $\cP$\nobreakdash-algebra map between them.\


Versions of the following theorem appear in \cite{Rezk},\cite{BM03},\cite{BM07},and \cite{BV}.  

\begin{theorem}\label{invariance}
Let $f:A\rightarrow B$ be a map between objects in the diagram category $\C^{S},$ and assume that $\C$ be a symmetric monoidal model category which satisfies all of the additional conditions necessary for $\multi(\C)_{S}$ to support a model category structure. Further, suppose that $\cP$ is a $\Sigma$-cofibrant object in $\multi(\C)_{S}$, i.e. that the underlying collection of $\cP$ is a cofibrant object in $\coll(\C)_{S}$.
\begin{enumerate}
  \item  Assume that $B$ is fibrant as an object in $\C^{S}$, and that $f^{\otimes n}$ is a trivial cofibration for each $n\geq 1$, then any $\cP$-algebra structure on $A$ extends along $f$ to a $\cP$-algebra structure on $B$.
  \item If $A$ is cofibrant as an object in $\C^{S}$, and $f$ is a trivial fibration, then any $\cP$-algebra structure on $B$ can be lifted along $f$ to a $\cP$-algebra structure on $A$.
  \item If both $A$ and $B$ are bifibrant objects in $\C^{S}$, and $f$ is a weak equivalence, then any $\cP$-algebra structure on $A$ (respectively $B$) induces a $\cP$-algebra structure on $B$ (respectively $A$) in such a way that $f$ preserves the $\cP$-algebra structures up to homotopy.\end{enumerate}
\end{theorem}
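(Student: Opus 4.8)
The plan is to reduce all three statements to lifting the $\cP$-algebra structure multifunctor against one of the two projections out of the endomorphism object $\End(f)$, and to control those projections by the pushout--product axiom in $\C$. Recall that a $\cP$-algebra structure on $A\in\C^{S}$ is exactly an identity-on-objects multifunctor $\cP\to\End(A)$, and that the data of $\cP$-algebra structures on $A$ and on $B$ making $f$ a strict $\cP$-algebra homomorphism is exactly an identity-on-objects multifunctor $\cP\to\End(f)$, where $\End(f)$ is the pullback of $\End(A)\to\End(A,B)\leftarrow\End(B)$, the two structure maps being level-wise post-composition with $f_{x}$ and pre-composition with $f^{\otimes n}:=f_{x_{1}}\otimes\cdots\otimes f_{x_{n}}$ respectively. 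Write $\pi_{A}\colon\End(f)\to\End(A)$ and $\pi_{B}\colon\End(f)\to\End(B)$ for the two projections; given a lift of the structure map along $\pi_{A}$ (respectively $\pi_{B}$), composing with the opposite projection reads off the transported structure, with $f$ a strict homomorphism by construction. So it suffices to produce such lifts.

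Now $\pi_{A}$ is the base change of ``pre-composition with $f^{\otimes n}$'' and $\pi_{B}$ the base change of ``post-composition with $f_{x}$''. Under the hypotheses of (1), $f^{\otimes n}$ is a trivial cofibration and $B(x)$ is fibrant, so the pushout--product axiom makes pre-composition with $f^{\otimes n}$ a trivial fibration in $\C$ for each signature; hence $\pi_{A}$ is a level-wise trivial fibration, and, being the identity on objects, it is a trivial fibration in $\multi(\C)_{S}$ (whose model structure is transferred along the free--forgetful adjunction with collections). Dually, under the hypotheses of (2), $A^{\otimes n}$ is cofibrant and $f_{x}$ is a trivial fibration, so post-composition with $f_{x}$ is a trivial fibration and $\pi_{B}$ is a trivial fibration in $\multi(\C)_{S}$. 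For (1) we then lift the given multifunctor $\cP\to\End(A)$ along $\pi_{A}$, and for (2) we lift $\cP\to\End(B)$ along $\pi_{B}$. This is the step that uses $\Sigma$-cofibrancy of $\cP$ in an essential way: since the underlying collection of $\cP$ is cofibrant, the lift can be produced, and chosen strictly multiplicative, by the standard transfer argument for algebras over a $\Sigma$-cofibrant multicategory (as in~\cite{Rezk,BM03,BM07,BV})---built along the canonical free-multicategory filtration of $\cP$, using the right lifting property at each stage while propagating compatibility with the composition products and the $\Sigma_{n}$-actions. I expect this propagation to be the main technical obstacle; everything else is formal. Composing the lift with the opposite projection yields (1) and (2).

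For (3), with $A$ and $B$ bifibrant and $f$ a weak equivalence, factor $f$ in $\C^{S}$ as $A\xrightarrow{\,i\,}C\xrightarrow{\,p\,}B$ with $i$ a trivial cofibration and $p$ a trivial fibration, so that $C$ is again bifibrant and $i^{\otimes n}$, $p^{\otimes n}$ are respectively a trivial cofibration and a trivial fibration between cofibrant objects. Running the computation above for $i$ and for $p$ shows that $\End(i)\to\End(A)$ and $\End(p)\to\End(B)$ are trivial fibrations in $\multi(\C)_{S}$, while the remaining two projections $\End(i)\to\End(C)$ and $\End(p)\to\End(C)$ are base changes of weak equivalences along fibrations---hence homotopy pullbacks of weak equivalences, hence themselves weak equivalences. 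Thus, all endomorphism multicategories being level-wise fibrant, the span
$$\End(A)\xleftarrow{\ \sim\ }\End(i)\xrightarrow{\ \sim\ }\End(C)\xleftarrow{\ \sim\ }\End(p)\xrightarrow{\ \sim\ }\End(B)$$
consists of weak equivalences in $\multi(\C)_{S}$. Now (1) applied to $i$ lifts the given structure $\cP\to\End(A)$ strictly through $\End(i)$ and hence defines a $\cP$-algebra structure on $C$ with $i$ strict; transporting the class $[\cP\to\End(C)]$ across the remaining two weak equivalences of the span in $\Ho(\multi(\C)_{S})$ produces a morphism $\cP\to\End(B)$ in the homotopy category, that is, a $\cP$-algebra structure on $B$ determined up to homotopy, and the chosen factorization exhibits $f$ as respecting the two structures up to homotopy. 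The symmetric assertion---a structure on $B$ inducing one on $A$---follows by traversing the same span in the other direction, using (2) applied to $p$ for the strict lift.
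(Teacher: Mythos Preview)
Your proposal is correct and follows essentially the same strategy as the paper: both proofs reduce (1) and (2) to lifting the structure multifunctor $\cP\to\End(A)$ (resp.\ $\cP\to\End(B)$) against the appropriate projection out of $\End(f)$, after using the pushout--product axiom to see that this projection is a level-wise trivial fibration; and both handle (3) by factoring $f$ as a trivial cofibration followed by a trivial fibration and transporting the structure across the resulting weak equivalences of endomorphism multicategories. The only cosmetic difference is that for (3) you organise the transport via the full span $\End(A)\xleftarrow{\sim}\End(i)\xrightarrow{\sim}\End(C)\xleftarrow{\sim}\End(p)\xrightarrow{\sim}\End(B)$, whereas the paper works with $\End(f_{2})$ alone and invokes the bijection $[\cP,\End(f_{2})]\cong[\cP,\End(Z)]$; your explicit flagging of the ``strictly multiplicative lift'' issue is also a point the paper passes over by simply calling $\cP$ a cofibrant operad.
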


\begin{proof} We define the collections $\End(A,B)$ and $\End(f)$ as we did above. The key idea in the proof is that a morphism $f$ is compatible with the $\cP$-algebra structure maps $\cP\to\End_A$ and $\cP\to\End_B$ if and only if these are induced by an operad map $\cP\to\End(f)$. 

We are first assuming that $f$ has a fibrant target. The model category structure on $\C^{S}$ has weak equivalences and fibrations defined objectwise. Further, $\C^{S}$ has a symmetric monoidal tensor product induced by the symmetric monoidal tensor product from the category $\C$, and this structure is compatible with the model category structure, i.e. $\C^{S}$ supports the structure of a monoidal model category over $\C$. 

Now, given our assumption that $B$ is fibrant, and that $\C^{S}$ is a $\C$-model category, we can apply the pushout-product axiom, to show that the horizontal maps of the diagram 
\begin{equation}
\xymatrix{
\End(f)\ar[r] \ar[d] &  \End(A) \ar[d] \\
\End(B)\ar[r] & \End(A,B),
}
\end{equation} are trivial fibrations. The additional assumption that $\cP$ is a cofibrant operad, implies that the $\cP$-algebra structure map $\cP\rightarrow\End_A$ has a lift $\cP\rightarrow\End_f\rightarrow\End_B$ giving the required $P$-algebra structure on $B$.\

The hypothesis of $(2)$ are dual, implying that $\End_f\rightarrow\End_B$ is a trivial fibration, and that the the $P$-algebra structure map $\cP\rightarrow\End_B$ lifts to \begin{equation*}\cP\rightarrow\End_f\rightarrow\End_A. \end{equation*}\

Now assume that $f$ is a weak equivalence between cofibrant-fibrant objects and we assume that $A$ is a $\cP$-algebra, i.e. there exists a morphism $\cP\rightarrow\End_{A}$. We can factor $f$ into a trivial cofibration $f_1:A\rightarrow Z$ followed by a trivial fibration $f_2:Z\rightarrow B$. Since $f_2$ is a trivial fibration with a cofibrant target, we may assume that $f_2$ admits a trivial cofibration as section. Now consider the following pullback diagram:
\begin{equation}\xymatrix{
\End_{f_2}\ar[r]^{\phi} \ar[d] &\End_{Z}\ar[d]^{(f_2)_*}\\
\End_{B} \ar[r]^{(f_2)^*}&\End_{Z,B}.
}
\end{equation} Since $B$ is fibrant, we can again apply the pushout product axiom to conclude that each of the collections in the diagram is locally fibrant (equivalently, each of the collections is fibrant in the model structure on $\coll(\C)_{S}$). The vertical maps are trivial fibrations, and the horizontal maps are weak equivalences. As we assumed that $\cP$ is $\Sigma$-cofibrant, we have that the upper horizontal map $\phi$ induces a bijection$$[\cP,\phi]:[\cP,\End_{f_2}]\cong[\cP,\End_Z].$$

Now, the map $f_1$ is a trivial cofibration with a cofibrant source, and so satisfies the conditions of $(2)$. Therefore we can extend the $\cP$-algebra structure map $\cP\to\End_A$ to a $\cP$-algebra structure map $\phi_1:\cP\rightarrow\End_Z$. Since $\End(Z)$ is fibrant, and $\cP$ is $\Sigma$-cofibrant, we can lift the map $\phi_1:\cP\rightarrow\End_Z$ to a map $\psi:\cP\to\End_{f_2}$ such that $\phi_1$ and the composite $\phi\psi$ are homotopic, and this map is \emph{unique} up to homotopy. The composite map \begin{equation*}\cP\rightarrow\End_{f_2}\rightarrow\End_B\end{equation*} gives $B$ the structure of a $\cP$-algebra.\

We can make the dual argument so show that a $\cP$-algebra structure on $B$ induces a $\cP$-algebra structure on $A$ in such a way that $f$ preserves the $\cP$-algebra structures up to homotopy.\end{proof}

\begin{corollary}\label{composition} Now consider $f:M\buildrel{i}\over\rightarrow L\buildrel{p}\over\rightarrow N$ where $i$ is a cofibration and $p$ is a fibration in $\M$. Then the induced map $g:\End_{i,p}\rightarrow\End_{f}$ is a fibration. Further, $g$ is a weak equivalence if either $i$ or $p$ is a weak equivalence.\end{corollary}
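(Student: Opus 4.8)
The plan is to identify $g$, computed one arity at a time, as a pullback of the Leibniz (``corner'') map attached to the cofibration $i$ and the fibration $p$, and then to invoke the pushout-product axiom for $\M$ exactly as in the proof of Theorem~\ref{invariance}.

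First I would unwind the two endomorphism objects in arity $n$. By definition $\End_{i,p}(n)$ is the object of triples $(\phi_M,\phi_L,\phi_N)$, with $\phi_M\in\Hom_{\C}(M^{\otimes n},M)$, $\phi_L\in\Hom_{\C}(L^{\otimes n},L)$, and $\phi_N\in\Hom_{\C}(N^{\otimes n},N)$, satisfying $i\phi_M=\phi_L\,i^{\otimes n}$ and $p\phi_L=\phi_N\,p^{\otimes n}$; while $\End_f(n)$ is the object of pairs $(\phi_M,\phi_N)$ with $f\phi_M=\phi_N\,f^{\otimes n}$, and $g$ is the map forgetting $\phi_L$. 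Introduce the matching object $R(n):=\Hom_{\C}(M^{\otimes n},L)\times_{\Hom_{\C}(M^{\otimes n},N)}\Hom_{\C}(L^{\otimes n},N)$. The assignments $\phi_L\mapsto(\phi_L\,i^{\otimes n},\,p\phi_L)$ and $(\phi_M,\phi_N)\mapsto(i\phi_M,\,\phi_N\,p^{\otimes n})$ define maps $\End(L)(n)\to R(n)$ and $\End_f(n)\to R(n)$, and a direct check (the relation $f\phi_M=\phi_N f^{\otimes n}$ being forced by the two commutation conditions) shows that
$$\xymatrix{
\End_{i,p}(n)\ar[r]^{g}\ar[d] & \End_f(n)\ar[d]\\
\End(L)(n)\ar[r] & R(n)
}$$
is a pullback square in $\C$. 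These identifications are natural in $[n]$ and $\Sigma_n$-equivariant, so $g$ is the pullback, in the category of $\C$-collections, of a map $\End(L)\to R$ whose component in arity $n$ is exactly the Leibniz map $\langle i^{\otimes n},p\rangle$ determined by $i^{\otimes n}\colon M^{\otimes n}\to L^{\otimes n}$ and $p\colon L\to N$.

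Next I would check that $\langle i^{\otimes n},p\rangle$ is a fibration in $\C$ for every $n$, and a trivial fibration as soon as $i$ or $p$ is a weak equivalence. This is precisely the pushout-product axiom for the $\C$-model category $\M$, applied to the cofibration $i^{\otimes n}$ and the fibration $p$. The one input still to be supplied is that $i^{\otimes n}\colon M^{\otimes n}\to L^{\otimes n}$ is again a cofibration, and a trivial cofibration whenever $i$ is a weak equivalence; this follows by iterating the pushout-product axiom, using that $M$ is cofibrant so that each $M^{\otimes k}$ is cofibrant, which is the point where the standing cofibrancy hypotheses enter. Granting it, $\langle i^{\otimes n},p\rangle$ is a fibration, and a trivial fibration when $i^{\otimes n}$ or $p$ is a weak equivalence, hence when $i$ or $p$ is.

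Finally, fibrations and trivial fibrations of $\C$-collections are detected arity by arity and are stable under pullback, so the pullback square above gives that $g$ is a fibration, and a trivial fibration, in particular a weak equivalence, whenever $i$ or $p$ is a weak equivalence; the arity-$1$ instance of the same computation also handles the underlying $1$-ary part. I expect the only real work to be the bookkeeping in the first paragraph, namely pinning down $R(n)$ and verifying that the square is a pullback, together with making precise which cofibrancy assumption renders $i^{\otimes n}$ a cofibration; no model-categorical input beyond the pushout-product axiom is needed.
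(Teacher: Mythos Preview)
Your approach is correct and is essentially the same as the paper's: both identify $g$ as a pullback of the Leibniz map $\End_L\to\End_{M,L}\times_{\End_{M,N}}\End_{L,N}$ and then invoke the pushout-product axiom, exactly as in Theorem~\ref{invariance}. Your write-up is more explicit than the paper's, in particular in verifying the pullback square and in flagging the cofibrancy hypothesis on $M$ needed to conclude that $i^{\otimes n}$ is a cofibration.
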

\begin{proof} Define $\End_{i,p}:=\End_{M}\times_{\End_{M,L}}\End_{L}\times_{\End_{L,N}}\End_{N}.$ Then, by the earlier claim we know that $h:\End_{L}\rightarrow\End_{M,L}\times_{\End_{L,N}}\End_{N}$ is a fibration, which is trivial if either $i$ or $p$ is a weak equivalence. Then we notice that $g$ is defined as pullback over $h$:
\begin{equation}
\xymatrix{
g\ar[r]^{g} \ar[d]^{f} &  \End_{M} \ar[d]^{\End_{M,L}} \\
\End_{N}\ar[r]^{\End_{L,N}} & h.} \end{equation}\end{proof}

\section{Appendix: The Homotopy Theory of Operadic Bimodules}A \emph{left $\cP$-module} is an object $M$ in $\coll(\C)$ together with a left $\cP$-action $\cP\circ M\longrightarrow M$. A right \emph{$\Q$-module} is an object $N$ in $\coll(\C)$ together with a right action $N\circ\Q\longrightarrow N$.\ 

\begin{definition}
For any two multicategories, $\cP$ and $\Q$, a \emph{$\cP$-$\Q$-bimodule} is consists of an object $M$ in $\coll(\C)_{\obj(\cP)\times\obj(\Q)}$ which has a left $\cP$ action and a compatible right $\Q$ action:
\begin{itemize} 
	\item for each $a_1, \ldots, a_n \in \Q$ and each $b\in \cP$, an $\M$-object $M(a_1, \ldots, a_n; b)$ 
	\item for each $a_i^j \in\Q$ and $b_i, b \in\cP$, a left $\cP$-action ($\M$-morphism)
\begin{eqnarray*}
\begin{array}[b]{r}
\cP(b_1, \ldots, b_n; b) \otimes
M(a_1^1, \ldots, a_1^{k_1}; b_1) \otimes\cdots \\
\otimes 
M(a_n^1, \ldots, a_n^{k_n}; b_n)	
\end{array}
&
\rightarrow	&
M(a_1^1, \ldots, a_n^{k_n}; b),	\\
(\phi, \xi_1, \ldots, \xi_n)	&
\mapsto	&
\phi \cdot (\xi_1, \ldots, \xi_n)
\end{eqnarray*}
\item for each $a_i^j, a_i \in \Q$ and each $b\in\cP$, a right $\Q$-action ($\M$-morphism)
\begin{eqnarray*}
\begin{array}[b]{r}
M(a_1, \ldots, a_n; b) \otimes
\Q(a_1^1, \ldots, a_1^{k_1}; a_1) \otimes\cdots \\
\otimes \Q(a_n^1, \ldots, a_n^{k_n}; a_n)	
\end{array}
&
\rightarrow	&
M(a_1^1, \ldots, a_n^{k_n}; b),	\\
(\xi, \theta_1, \ldots, \theta_n)	&
\mapsto	&
\xi\cdot (\theta_1, \ldots, \theta_n),
\end{eqnarray*}
\end{itemize}which satisfy the evident axioms for compatibility with the composition products and identities of both $\Q$ and $\cP$, in addition to:
\[
(\phi \cdot (\xi_1, \ldots, \xi_n))
\cdot 
(\theta_1^1, \ldots, \theta_n^{k_n})
=
\phi \cdot
(\xi_1 \cdot (\theta_1^1, \ldots, \theta_1^{k_1}),
\ldots,
\xi_n \cdot (\theta_n^1, \ldots, \theta_n^{k_n}))
\]whenever these expressions make sense.\end{definition}

The morphisms between $\cP$-$\Q$-bimodules are maps of collections which are compatible with both the left $\cP$-action and the right $\Q$-action. We denoted the resulting category by ${}_{\cP}\M_{\Q}$. In the special case where $\cP$ and $\Q$ have only unary arrows, we recover the usual definition of bimodule between enriched categories (sometimes called a \emph{pro-functor}). If $\cP$ and $\Q$ are both operads, then we recover Rezk's definition of a $(\cP,\Q)$-biobject~\cite{Rezk}. 

\begin{example}Every multicategory is itself a $\cP$--$\cP$-bimodule.\end{example} 

\begin{example}\label{unit}Let $\cP = I$ be the trivial operad, so that $\Alg_{I}(\C) = \C.$ An $I$-$I$–-bimodule $M$ is just a symmetric sequence in $\C$.\end{example} 

The category of right $\Q$-modules is a closed, symmetric monoidal category over $\C$. This fact requires some checking (see~\cite{FresseBook}), but if we accept that the category of collections in $\C$ is a closed, symmetric monoidal category over $\C$ with respect to the pointwise tensor product, then it remains to check that this structure lifts to the category of objects with right $\Q$-action. Since the circle product commutes with colimits on the \emph{left}, the tensor product of two right $\Q$-modules has a natural right $\Q$-module structure, where\begin{equation*} M\circ\Q\otimes N\circ\Q = (M\otimes N)\circ\Q.\end{equation*}

\begin{lemma}Let $\Q$ be a multicategory enriched in $\C$. The category of right $\Q$-modules is a cocomplete, closed, symmetric monoidal category over $\C$.\end{lemma}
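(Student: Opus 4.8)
The plan is to present $\M_\Q$ as the category of algebras over the monad $T=(-)\circ\Q$ on the category $\coll(\C)$ of $\C$-collections and to transport the required structure from $\coll(\C)$, which we take as known to be a cocomplete, closed, symmetric monoidal category over $\C$ for the tensor product $(M\otimes N)(n)=\coprod_{p+q=n}\mathrm{Ind}_{\Sigma_p\times\Sigma_q}^{\Sigma_n}\bigl(M(p)\otimes N(q)\bigr)$, with unit the collection carrying the $\C$-unit in arity $0$ (see~\cite[Ch.~6]{FresseBook}). The paragraph preceding the lemma already records the essential point: since $(-)\circ\Q$ commutes with colimits in the left variable, there is a natural isomorphism $(M\otimes N)\circ\Q\cong(M\circ\Q)\otimes(N\circ\Q)$, so that for right $\Q$-modules $(M,\lambda_M)$ and $(N,\lambda_N)$ the collection $M\otimes N$ acquires the action $(M\otimes N)\circ\Q\xrightarrow{\cong}(M\circ\Q)\otimes(N\circ\Q)\xrightarrow{\lambda_M\otimes\lambda_N}M\otimes N$, and the arity-$0$ unit is a (trivial) right $\Q$-module because $(-)\circ\Q$ fixes it.

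Granting this, the monoidal statement is bookkeeping: the associativity and unit axioms for the lifted action follow from naturality of the distributivity isomorphism together with the module axioms for $M$ and $N$, while the associativity, unit, symmetry, and hexagon coherences of $\otimes$ on $\M_\Q$ hold automatically because the forgetful functor $U\colon\M_\Q\to\coll(\C)$ is by construction strong symmetric monoidal and faithful, so each coherence diagram commutes as soon as it commutes in $\coll(\C)$. For the phrase ``over $\C$'', I would compose the strong symmetric monoidal functor $\C\to\coll(\C)$ sending $X$ to the collection with $X$ in arity $0$ (and the initial object elsewhere) with the observation that its image lands in $\M_\Q$; equivalently, $(X\otimes M)(n):=X\otimes M(n)$ exhibits $\M_\Q$ as tensored over $\C$ compatibly with the symmetric monoidal structure.

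Cocompleteness is then formal: $U\colon\M_\Q\to\coll(\C)$ creates all colimits because $T=(-)\circ\Q$ preserves colimits, and $\coll(\C)$ is cocomplete. The only step requiring genuine work is closedness, i.e.\ producing an internal hom $\underline{\Hom}_{\M_\Q}(M,N)$ together with a natural isomorphism $\M_\Q(L\otimes M,N)\cong\M_\Q(L,\underline{\Hom}_{\M_\Q}(M,N))$. I would either construct it explicitly as an equalizer in $\coll(\C)$ of the two maps $\underline{\Hom}_{\coll(\C)}(M,N)\rightrightarrows\underline{\Hom}_{\coll(\C)}(M\circ\Q,N)$ determined by the actions on $M$ and on $N$, equip that equalizer with its induced right $\Q$-module structure, and verify the adjunction directly; or, observing that $\C$, hence $\coll(\C)$, and hence $\M_\Q$ (as the category of algebras over an accessible monad), is locally presentable, note that $(-)\otimes M\colon\M_\Q\to\M_\Q$ preserves all colimits---colimits in $\M_\Q$ are created by the conservative $U$ and $U((-)\otimes M)=U(-)\otimes U(M)$ preserves them---and invoke the adjoint functor theorem. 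I expect the construction and verification of this internal hom to be the main obstacle; everything else follows directly from the discussion preceding the lemma and standard monad theory.
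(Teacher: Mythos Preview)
Your proposal is correct and follows essentially the same line as the paper, which does not give a self-contained proof but instead sketches the argument in the paragraph preceding the lemma and defers to Fresse~\cite{FresseBook} for details; your write-up simply fills those details in (monadic presentation, creation of colimits, and two routes to the internal hom).

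One point worth flagging: the paper describes the tensor on collections as the ``pointwise tensor product,'' whereas you use the Day-convolution-style tensor $(M\otimes N)(n)=\coprod_{p+q=n}\mathrm{Ind}_{\Sigma_p\times\Sigma_q}^{\Sigma_n}\bigl(M(p)\otimes N(q)\bigr)$. Yours is the tensor for which the distributivity $(M\otimes N)\circ\Q\cong(M\circ\Q)\otimes(N\circ\Q)$ actually holds, and it is the one Fresse uses in the cited reference, so this appears to be a minor imprecision in the paper's wording rather than an error in your argument.
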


\begin{theorem}\label{model structure on right modules}Let $\Q$ be a multicategory which is locally cofibrant, i.e. for every $n\ge0$ and every sequence $x_1,...,x_n;x$ the $\C$-object $\Q(x_1,...,x_n;x)$ is a cofibrant object in $\C$. Then the category of right $\Q$-modules admits a cofibrantly generated monoidal model category over $\C$. Moreover, if $\C$ is right (respectively, left) proper then so is the model category structure on $\M_{\Q}$.\end{theorem}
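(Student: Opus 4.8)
The plan is to obtain the model structure by transfer along the free--forgetful adjunction
$F=(-)\circ\Q\colon \coll(\C)_S\rightleftarrows\M_\Q\colon U$,
where $S=\obj(\Q)$ and $\coll(\C)_S$ carries the cofibrantly generated monoidal model structure inherited levelwise from $\C$ (the projective structure on a diagram category), whose generating (trivial) cofibrations are the cells obtained by tensoring those of $\C$ with the representables at each arity and colour-tuple. The key structural observation is that $U$ is monadic with monad $T=(-)\circ\Q$, and that $T$ is \emph{linear} in its argument (the collection being acted on occurs exactly once in the circle product), so $T$ commutes with all colimits; hence $U$ creates all colimits, preserves filtered colimits and transfinite compositions, and smallness of domains is inherited from $\C$. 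I would then declare $f$ in $\M_\Q$ to be a weak equivalence, resp.\ fibration, iff $Uf$ is one in $\coll(\C)_S$, take the generating (trivial) cofibrations to be $F$ applied to those of $\coll(\C)_S$ — these are exactly the sets $I,J$ of maps $K\circ\Q\to L\circ\Q$ recorded before the statement — and invoke the standard transfer criterion (see \cite{BM07}). What remains is to verify that the small object argument applies (immediate from the previous sentences together with cofibrant generation of $\C$) and, crucially, that every relative $J$-cell complex is a weak equivalence.

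The heart of the argument is this acyclicity condition, and it is exactly here that local cofibrancy of $\Q$ enters. Unwinding the circle product, for a generating trivial cofibration $j\colon K\to L$ of $\coll(\C)_S$ the map $F(j)=j\circ\Q$ is, in each arity $n$ and at each colour-tuple, a coproduct over grafting patterns of maps of the form
$K(\vec y;x)\otimes_{\Sigma_n}\bigl(\Q(-;y_1)\otimes\cdots\otimes\Q(-;y_n)\bigr)\to L(\vec y;x)\otimes_{\Sigma_n}\bigl(\Q(-;y_1)\otimes\cdots\otimes\Q(-;y_n)\bigr)$.
Because $j$ is a generating cell, the $\Sigma_n$-actions on $K(\vec y;x)$ and $L(\vec y;x)$ are free (the cells are induced up from the automorphism groups of colour-tuples), so $(-)\otimes_{\Sigma_n}(\cdots)$ is a genuine tensor of a trivial cofibration of $\C$ with the object $\Q(-;y_1)\otimes\cdots\otimes\Q(-;y_n)$, which is cofibrant in $\C$ by the hypothesis on $\Q$ together with the pushout--product axiom in $\C$. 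Thus $F(j)$ is levelwise in $\C$ a trivial cofibration. Since $U$ preserves pushouts and transfinite compositions, pushing $F(j)$ out along an arbitrary map of right $\Q$-modules and composing transfinitely again produces a map that is levelwise a trivial cofibration in $\C$, hence a weak equivalence. The same computation with ``trivial'' deleted shows every relative $I$-cell complex is levelwise a cofibration in $\C$. This verifies the transfer hypotheses and yields the cofibrantly generated model structure, with cofibrations levelwise cofibrations in $\C$.

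For the monoidal assertion I would use that $F$ is strong monoidal for the pointwise tensor — precisely the identity $M\circ\Q\otimes N\circ\Q=(M\otimes N)\circ\Q$ recorded above — and cocontinuous, so the pushout--product $F(i)\,\square\,F(i')$ equals $F$ applied to $i\,\square\,i'$ computed in $\coll(\C)_S$; the latter is monoidal model (inherited levelwise from $\C$, the diagonal $\Sigma_n$-action on a tensor of free cells being again projectively cofibrant), and $F$ is left Quillen by construction, so $F(i\,\square\,i')$ is a (trivial) cofibration, and testing on generators suffices. The unit axiom and the tensoring/pushout--product over $\C$ (hence ``monoidal model category over $\C$'') are handled the same way. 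Finally, $U$ preserves limits (being a right adjoint) and colimits (being created), detects fibrations and weak equivalences, and carries cofibrations of $\M_\Q$ to levelwise cofibrations in $\C$; so if $\C$ is right proper, pullbacks along fibrations in $\M_\Q$ are computed levelwise and preserve weak equivalences, and dually for left properness. The one genuinely delicate point is the acyclicity step: it is precisely there that the symmetric-group quotients in the circle product must be controlled, and it is exactly the local cofibrancy of $\Q$ — making each $\Q(-;y_i)$ and their tensor powers cofibrant — that renders those quotients homotopically harmless.
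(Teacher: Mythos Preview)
Your proposal is correct and follows essentially the same route as the paper: transfer along the free--forgetful adjunction $(-)\circ\Q:\coll(\C)\rightleftarrows\M_\Q:U$, with the acyclicity condition reduced (via the distribution of the external tensor over the circle product) to the statement that tensoring a trivial cofibration of $\C$ with a cofibrant object preserves trivial cofibrations, which is exactly where local cofibrancy of $\Q$ is invoked. The paper in fact defers most details to \cite{Rezk,FresseBook} and only records the key identity $(i\otimes K)\circ\Q=i\otimes(K\circ\Q)$ together with the observation that $K\circ\Q$ is locally cofibrant; your unwinding of the $\Sigma_n$-quotients and the freeness of the generating cells makes the same point more explicitly, and your treatments of the monoidal axiom (via strong monoidality of $F$) and properness (via $U$ creating limits and colimits and reflecting the relevant classes) match the paper's subsequent propositions.
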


\begin{remark}Throughout this paper we have been using the model structure from~\cite{BM03} on simplicial operads (or a generalization there of), but in the thesis~\cite{Rezk}, Rezk has a different, though Quillen equivalent, model structure on the category of simplicial operads. The reader could choose to use Rezk's model structure (properly generalized), and then it would not be necessary to assume that your multicategory is locally cofibrant.\end{remark} 

The theorem follows from somewhat standard arguments that the forgetful functor from right $\Q$-modules to collections\begin{equation*} -\circ \Q:\coll(\C)\rightleftarrows\M_{\Q}:U\end{equation*}preserves and detects weak equivalences and fibrations. Since there are at least two proofs known to the author of this theorem in the one-object case (See,~\cite{Rezk,FresseBook}) which easily generalize to the many objects case\footnote{Note that here ``many objects'' is still referring to fixed object sets since the category of right $\Q$-modules is really collections with $|\obj(\Q)|$ objects which are equipped with a right $\Q$-action.}, we will not include a complete proof here. The important thing for us is that we can describe the generating (acyclic) cofibrations as tensor products of generating (acyclic) cofibrations of the base category $\C$. The distribution relation between the composition product and the external tensor product gives identifications $(i\otimes K)\circ \R = i\otimes(K\circ \R).$Now, if the tensor product $-\otimes D: \C\rightarrow\C$ maps acyclic cofibrations to weak-equivalences for all $D\in\C$, then it follows immediately that the model structure lifts from $\coll(\C)$ to $\M_{\Q}$. Otherwise, we can use our assumption that $\Q$ is locally cofibrant to show that the objects $K\circ \Q$ are locally cofibrant. It follows that the tensor products $-\otimes(K\circ\Q))$ preserve acyclic cofibrations.

\begin{lemma}The generating (acyclic) cofibrations of the model category structure on $\M_{\Q}$ are given by \begin{equation*}i\circ\Q: K\circ\Q\longrightarrow L\circ \Q, \end{equation*} where $i:K\rightarrow L$ is a generating (acyclic) cofibration of $\coll(\C)$.\end{lemma}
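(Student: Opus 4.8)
The plan is to identify the model structure of Theorem~\ref{model structure on right modules} as the one obtained by \emph{transfer} along the free-forgetful adjunction $-\circ\Q:\coll(\C)\rightleftarrows\M_{\Q}:U$, and then to extract the generators formally. Write $I$ and $J$ for the generating cofibrations and generating acyclic cofibrations of $\coll(\C)$, so that $I$-injectives are exactly the acyclic fibrations of $\coll(\C)$ and $J$-injectives are exactly its fibrations, and set $I\circ\Q:=\{i\circ\Q\mid i\in I\}$ and $J\circ\Q:=\{j\circ\Q\mid j\in J\}$.

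The only genuinely formal point is the adjunction lifting of lifting properties. For any map $p$ in $\M_{\Q}$ and any $i\colon K\to L$ in $\coll(\C)$, a lifting problem of $i\circ\Q$ against $p$ transposes, via the adjunction isomorphism $\M_{\Q}(K\circ\Q,\,p)\cong\coll(\C)(K,\,U(p))$ applied at both source and target, to a lifting problem of $i$ against $U(p)$; hence $p$ is $(I\circ\Q)$-injective if and only if $U(p)$ is $I$-injective, i.e. an acyclic fibration in $\coll(\C)$, i.e. (by the definition of the transferred structure, in which $U$ detects weak equivalences and fibrations) an acyclic fibration in $\M_{\Q}$. The identical argument with $J$ in place of $I$ shows that $p$ is $(J\circ\Q)$-injective if and only if $p$ is a fibration in $\M_{\Q}$. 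Thus $I\circ\Q$ and $J\circ\Q$ detect, by injectivity, precisely the acyclic fibrations and the fibrations of $\M_{\Q}$.

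It then remains to invoke the recognition theorem for cofibrantly generated model categories (see, e.g.,~\cite{Hir03}): the two sets $I\circ\Q$, $J\circ\Q$ admit the small object argument — their domains $K\circ\Q$ are small relative to the relevant cellular maps because $K$ is small in $\coll(\C)$, $-\circ\Q$ preserves colimits, and $U$ preserves the filtered colimits along which smallness is tested — and every relative $(J\circ\Q)$-cell complex is a weak equivalence, which is exactly the acyclicity input secured in the proof of Theorem~\ref{model structure on right modules} (this is where local cofibrancy of $\Q$ is consumed, via the identities $(i\otimes K)\circ\Q=i\otimes(K\circ\Q)$ and the consequent fact that $-\otimes(K\circ\Q)$ preserves acyclic cofibrations). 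The recognition theorem now produces a cofibrantly generated model structure with generating (acyclic) cofibrations $I\circ\Q$ (respectively $J\circ\Q$) whose fibrations and weak equivalences are precisely those detected by $U$; by uniqueness of such a structure this is the model structure of Theorem~\ref{model structure on right modules}, and the lemma follows. The substantive obstacle here is not in the lemma — which is bookkeeping with the free-forgetful adjunction — but in the acyclicity of $(J\circ\Q)$-cells underlying it, and that is already handled in the discussion preceding the statement.
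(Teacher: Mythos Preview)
Your argument is correct and matches the paper's approach: the paper does not give a separate proof of this lemma, treating it as an immediate consequence of the transfer of the model structure along the free-forgetful adjunction $-\circ\Q:\coll(\C)\rightleftarrows\M_{\Q}:U$ sketched in the paragraph preceding the statement. Your write-up simply makes explicit the adjunction-lifting and recognition-theorem steps that the paper leaves implicit.
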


\begin{prop}The model category of right $\Q$-modules is a monoidal model category over $\C$.\end{prop}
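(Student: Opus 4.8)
The plan is to verify the pushout--product axiom (and the unit axiom) for $\M_{\Q}$ viewed as a category tensored over $\C$ via the external tensor product $D\otimes(-)$. Since the preceding lemma exhibits $\M_{\Q}$ as cofibrantly generated with generating (acyclic) cofibrations of the form $i\circ\Q\colon K\circ\Q\to L\circ\Q$ — where $i\colon K\to L$ ranges over the generating (acyclic) cofibrations of $\coll(\C)$ — the usual reduction (the pushout--product bifunctor commutes with colimits in each variable, and cofibrations are closed under pushout, transfinite composition and retracts) shows that it is enough to check the axiom on these generating sets; the same reduction handles the interaction with $\C$, where it suffices to work with the generating (acyclic) cofibrations of $\C$ itself.

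First I would isolate the two elementary facts that carry the argument. The functor $-\circ\Q\colon\coll(\C)\to\M_{\Q}$ is a left adjoint, hence preserves colimits, and it is strong monoidal from the pointwise tensor product on $\coll(\C)$ to the tensor product on $\M_{\Q}$, because by construction $(M\circ\Q)\otimes(N\circ\Q)=(M\otimes N)\circ\Q$. Together with the distributivity identity $D\otimes(K\circ\Q)=(D\otimes K)\circ\Q$ for $D\in\C$ (the object-level version of $(i\otimes K)\circ\R=i\otimes(K\circ\R)$), these facts say precisely that $-\circ\Q$ carries a pushout--product square in $\coll(\C)$ — formed internally, or over $\C$ — to the corresponding pushout--product square in $\M_{\Q}$. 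Explicitly, for generating cofibrations $i$ and $j$ of $\coll(\C)$ one gets
\begin{equation*}
(i\circ\Q)\,\square\,(j\circ\Q)\;\cong\;(i\,\square\,j)\circ\Q ,
\end{equation*}
and for a generating cofibration $i$ of $\C$ and a generating cofibration $j$ of $\coll(\C)$ one gets $i\,\square\,(j\circ\Q)\cong(i\,\square\,j)\circ\Q$, where on the right $i\,\square\,j$ denotes the pushout--product in $\coll(\C)$ taken over $\C$.

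This reduces the whole statement to the assertion that $\coll(\C)$ is a monoidal model category over $\C$ with respect to the pointwise tensor product, which is checked one arity at a time. In arity $n$ the pointwise tensor is the tensor product on $\C^{\Sigma_{n}}$ with diagonal action, and since the generating (acyclic) cofibrations of $\coll(\C)$ are the free $\Sigma_{n}$-cells $\Sigma_{n}\cdot i$ on the generating (acyclic) cofibrations $i$ of $\C$, their pushout--products are again free $\Sigma_{n}$-cells on (acyclic) cofibrations of $\C$ by the pushout--product axiom in $\C$; hence they are (acyclic) cofibrations of $\coll(\C)$. The cofibration clause of the axiom for $\M_{\Q}$ then follows at once. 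For the unit, the unit of $\M_{\Q}$ is $\unit\circ\Q$ for $\unit$ the unit collection; when $\unit_{\C}$ is cofibrant this is a free right $\Q$-module on a cofibrant collection and hence cofibrant in $\M_{\Q}$, and in general one replaces $\unit$ by a cofibrant resolution and runs the argument below.

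The main obstacle is the acyclicity clause in the case where $\C$ is not itself well behaved — that is, where $-\otimes D\colon\C\to\C$ fails to preserve acyclic cofibrations for arbitrary $D$. This is exactly the difficulty already met in transferring the model structure onto $\M_{\Q}$, and it is dispatched the same way: the objects $K\circ\Q$ and $L\circ\Q$ appearing in a generating (acyclic) cofibration of $\M_{\Q}$ are locally cofibrant, since $\Q$ is locally cofibrant and $K,L$ are cofibrant collections, so tensoring with them preserves acyclic cofibrations. Substituting this into the identity $(i\,\square\,j)\circ\Q$ with $j$ a generating acyclic cofibration shows that the pushout--product is an acyclic cofibration, which completes the verification of the pushout--product axiom and hence of the proposition.
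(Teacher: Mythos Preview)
Your proposal is correct and follows essentially the same route as the paper: reduce to the fact that $\coll(\C)$ is a symmetric monoidal model category over $\C$ (which the paper cites from Fresse), and use the description of the generating (acyclic) cofibrations of $\M_{\Q}$ as $i\circ\Q$ together with the strong monoidality of $-\circ\Q$ to transport the pushout--product axiom. The paper's proof is a one-sentence sketch of precisely this; you have simply made explicit the isomorphism $(i\circ\Q)\,\square\,(j\circ\Q)\cong(i\,\square\,j)\circ\Q$ and the attendant local-cofibrancy considerations that the paper leaves implicit.
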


\begin{proof}The claim follows from the fact that the category of $\C$-collections forms a symmetric monoidal model category over $\C$ (generalize~\cite[14.1]{FresseBook} to fixed objects case), and the description of the generating (acyclic) cofibrations for right $\Q$-modules.\end{proof}

\begin{prop}~\cite[14.1]{FresseBook}If $\C$ is a right (respectively, left) proper model category and $\Q$ is locally cofibrant, then $\M_{\Q}$ is right (respectively, left) proper as well.\end{prop}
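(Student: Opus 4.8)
The plan is to deduce both statements from the corresponding properties of $\coll(\C)$ by transporting squares along the forgetful functor $U\colon\M_\Q\to\coll(\C)$, which by Theorem~\ref{model structure on right modules} detects and preserves the fibrations and weak equivalences of $\M_\Q$. Two further facts about $U$ are available: it is the right adjoint of $-\circ\Q$, hence preserves all limits; and, since the circle product commutes with colimits in its first variable, the monad $-\circ\Q$ on $\coll(\C)$ preserves colimits, so $U$ actually \emph{creates} all colimits of right $\Q$-modules. Finally, $\coll(\C)_{\obj(\Q)}$ is right (respectively left) proper whenever $\C$ is: its weak equivalences and fibrations are created by the forgetful functor to a product of copies of $\C$ indexed by signatures, so right properness is immediate, and for the left-proper statement one uses that the generating cofibrations of $\coll(\C)_{\obj(\Q)}$ have underlying (equivariant) cofibrations of diagrams --- automatic in the motivating case $\C=\sSets$, where every object is cofibrant. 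So it suffices to push a pullback (respectively pushout) square from $\M_\Q$ down to $\coll(\C)$ and back.

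For right properness, take a pullback square in $\M_\Q$ whose one leg is a fibration $p$ and whose opposite leg is a weak equivalence $w$. Applying $U$ yields a pullback square in $\coll(\C)$, since $U$ preserves limits, in which $U(p)$ is a fibration and $U(w)$ a weak equivalence. Right properness of $\coll(\C)$ makes the base change of $U(w)$ along $U(p)$ a weak equivalence, and, as $U$ reflects weak equivalences, the corresponding map in $\M_\Q$ is a weak equivalence. No hypothesis on $\Q$ is used here.

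For left properness, take a pushout square in $\M_\Q$ whose one leg is a cofibration $i$ and whose opposite leg is a weak equivalence $w$. Because $U$ creates colimits, its image is a pushout square in $\coll(\C)$ with $U(w)$ a weak equivalence. The key claim is that $U(i)$ is again a cofibration: the generating cofibrations of $\M_\Q$ are the maps $K\circ\Q\to L\circ\Q$ with $K\to L$ a generating cofibration of $\coll(\C)$, and because $\Q$ is locally cofibrant each iterated circle power $\Q^{\circ k}$ has cofibrant object-pieces, so --- arguing exactly as in the discussion of the generating cofibrations of $\M_\Q$, via the pushout--product axiom in $\coll(\C)$ --- each $K\circ\Q\to L\circ\Q$ is a cofibration of collections; since $U$ preserves colimits it carries relative cell complexes to relative cell complexes and retracts to retracts, hence cofibrations of $\M_\Q$ to cofibrations of $\coll(\C)$. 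Left properness of $\coll(\C)$ then makes the cobase change of $U(w)$ along $U(i)$ a weak equivalence, and reflection of weak equivalences by $U$ completes the argument.

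The only non-formal ingredients are confined to the left-proper half: that $U$ preserves pushouts (which rests on the circle product commuting with colimits on the left, not on any adjunction), and that $U$ preserves cofibrations (which genuinely uses local cofibrancy of $\Q$, together with a little bookkeeping for the $\Sigma_n$-actions inside the circle product). Both have essentially been set up already, so the proof amounts to assembling them; the one-object case is \cite[14.1]{FresseBook}.
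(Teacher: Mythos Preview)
The paper provides no proof of this proposition beyond the citation to \cite[14.1]{FresseBook}, so there is nothing to compare your argument against directly; your sketch is essentially a reconstruction of Fresse's argument, and the strategy is the correct one.

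Your right-proper half is completely clean: $U$ is a right adjoint, hence preserves pullbacks, and it creates fibrations and weak equivalences, so right properness descends immediately from $\coll(\C)$ with no hypothesis on $\Q$.

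Your left-proper half has the right shape but the two ``non-formal ingredients'' you flag deserve a little more than you give them. First, that $\coll(\C)$ is left proper when $\C$ is: this is true for the projective model structure on any diagram category because projective cofibrations are objectwise cofibrations in $\C$, colimits are objectwise, and weak equivalences are objectwise, so the statement reduces to $\C$ itself --- your aside about simplicial sets is unnecessary. Second, and more seriously, the claim that $U$ sends cofibrations of $\M_\Q$ to cofibrations of $\coll(\C)$ reduces (via your correct observation that $U$ preserves cell complexes and retracts) to showing that $-\circ\Q$ preserves cofibrations of collections. This is exactly where local cofibrancy of $\Q$ is used, but it is not merely a ``pushout--product'' statement: one must analyze the circle product $K\circ\Q$ levelwise, decompose it as an induced $\Sigma_n$-object built from tensor products of cofibrant pieces of $\Q$, and check that $K\to L$ a $\Sigma$-cofibration forces $K\circ\Q\to L\circ\Q$ to be one as well. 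Fresse carries this out carefully (it is the content behind the lemma you cite); your gesture toward it is adequate for a sketch but would need expansion to stand alone.
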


We can now prove the following theorem.\

\begin{theorem}[Model Structure]Let $\cP$ and $\Q$ be two simplicial multicategories. If the category of right $\Q$-modules admits a cofibrantly generated model category structure then the category $[\underline{\Hom}]_{1}(\cP,\M_{\Q})$ admits a cofibrantly generated model category structure where a homomorphism $f:A\rightarrow B$ is a weak equivalence (respectively, fibration) if the underlying map of right $\Q$-modules, $Uf:U(A)\rightarrow U(B)$, is a weak equivalence (respectively, fibration).\end{theorem}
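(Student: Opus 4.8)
The plan is to obtain the model structure by transferring the cofibrantly generated model structure on $\M_{\Q}^{S}$, where $S=\obj(\cP)$, along the free--forgetful adjunction. Recall from the discussion of the monad $T$ that $[\underline{\Hom}]_{1}(\cP,\M_{\Q})$ is canonically the category of $T$-algebras in $\M_{\Q}^{S}$, so it is complete and cocomplete by Corollary~\ref{bicomplete}, and the free $\cP$-algebra functor $F_{\cP}$ is left adjoint to the forgetful functor $U_{\cP}\colon[\underline{\Hom}]_{1}(\cP,\M_{\Q})\to\M_{\Q}^{S}$. By hypothesis $\M_{\Q}$ is cofibrantly generated, hence so is the product category $\M_{\Q}^{S}$, with generating cofibrations $\iota_{*}I$ and generating acyclic cofibrations $\iota_{*}J$ as described above. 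I would then invoke Kan's lifting theorem: declare a morphism $f$ of $\cP$-algebras to be a weak equivalence or fibration exactly when $U_{\cP}f$ is one in $\M_{\Q}^{S}$, take $F_{\cP}(\iota_{*}I)$ and $F_{\cP}(\iota_{*}J)$ as candidate generating (acyclic) cofibrations, and verify the two hypotheses of the transfer theorem.

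The first hypothesis is smallness. The monad $T$ is assembled from coproducts, the tensoring of $\M_{\Q}$ over $\sSets$, the closed symmetric monoidal product of $\M_{\Q}$, and $\Sigma_{n}$-coinvariants; each of these commutes with filtered colimits, so $U_{\cP}$ preserves filtered colimits. Consequently the domains of the maps in $F_{\cP}(\iota_{*}I)$ and $F_{\cP}(\iota_{*}J)$ are small relative to the corresponding cell complexes, since their images under $U_{\cP}$ are small in $\M_{\Q}^{S}$. The second hypothesis is acyclicity: every relative $F_{\cP}(\iota_{*}J)$-cell complex must have underlying map in $\M_{\Q}^{S}$ an acyclic cofibration. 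Since transfinite composites and retracts of acyclic cofibrations remain acyclic cofibrations in the monoidal model category $\M_{\Q}$, it suffices to treat one pushout: for $j\colon K\to L$ a generating acyclic cofibration of $\M_{\Q}^{S}$ and a $\cP$-algebra map $F_{\cP}(K)\to X$, the map $X\to X[j]:=X\sqcup_{F_{\cP}(K)}F_{\cP}(L)$ must become an acyclic cofibration after $U_{\cP}$.

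To see this I would use the standard filtration of a free algebra extension: $U_{\cP}X[j]$ is a sequential colimit $U_{\cP}X=X_{0}\to X_{1}\to X_{2}\to\cdots$ in $\M_{\Q}^{S}$, where each $X_{t-1}\to X_{t}$ is a pushout along a coproduct, over signatures $(x_{1},\dots,x_{n};x)$, of maps obtained by applying $\cP(x_{1},\dots,x_{n};x)\otimes_{\Sigma_{n}}(-)$ to an iterated pushout--product of $j$ with copies of $X$ (the standard latching presentation of a free extension). Because $\C=\sSets$ is Cartesian closed with every object cofibrant, and every simplicial operad is admissible in the sense of Berger--Moerdijk~\cite{BM07}, the functor $\cP(x_{1},\dots,x_{n};x)\otimes_{\Sigma_{n}}(-)$ carries the relevant $\Sigma_{n}$-equivariant acyclic cofibration---which is an iterated pushout--product of $j$ with copies of $X$, hence $\Sigma_{n}$-equivariantly an acyclic cofibration by the monoidal model axioms for $\M_{\Q}$ and the cell structure of $j$---to an acyclic cofibration of right $\Q$-modules. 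Hence each $X_{t-1}\to X_{t}$ is an acyclic cofibration in $\M_{\Q}^{S}$, and so is the composite $U_{\cP}X\to U_{\cP}X[j]$. With both hypotheses verified, the transfer theorem yields a cofibrantly generated model structure on $[\underline{\Hom}]_{1}(\cP,\M_{\Q})$ with generating (acyclic) cofibrations $F_{\cP}(\iota_{*}I)$ (resp.\ $F_{\cP}(\iota_{*}J)$); in it $f$ is a fibration or weak equivalence precisely when $U_{\cP}f$ is, which is exactly the stated condition on underlying right $\Q$-modules.

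The main obstacle is the acyclicity step, specifically the bookkeeping in the filtration of the free $\cP$-algebra extension and the verification that the non-free $\Sigma_{n}$-actions on the pushout--product terms do not obstruct preservation of acyclic cofibrations. Over $\sSets$ this is the admissibility of $\cP$, available for all simplicial multicategories, but it is the one place where the argument uses a genuine feature of the enriching category beyond its being a monoidal model category; for a general base $\C$ one would have to assume $\cP$ is $\Sigma$-cofibrant (or directly that it is admissible) for the same reasoning to go through.
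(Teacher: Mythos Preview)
Your proposal is correct and follows essentially the same approach as the paper: both transfer the model structure along the free--forgetful adjunction $F_{\cP}\dashv U_{\cP}$, with the substantive work being the acyclicity of relative $F_{\cP}(\iota_{*}J)$-cell complexes, established via the Schwede--Shipley-style filtration of a pushout $X\to X\sqcup_{F_{\cP}(K)}F_{\cP}(L)$. The paper writes out the filtration more explicitly (introducing the auxiliary objects $G_{x_{1},\dots,x_{k}}(A)$ and $C_{k,i}(f)$) and verifies the model-category axioms directly rather than citing Kan's lifting theorem by name, but the content is the same.
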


Recall that given any cocomplete category $\M$ and any class of maps $I$ in $\M$ then the subcategory of \emph{relative $I$-cell complexes} is the subcategory which can be constructed via transfinite compositions and pushouts of the maps in $I$. 

\begin{lemma}[Classifying Fibrations]\label{classifyingfibrations}A map in $[\underline{\Hom}]_{1}(\cP,\M_{\Q})$ is a fibration if, and only if, it has the right lifting property with respect to retracts of relative $F_{\cP}(\iota_{*}J)$-cell complexes.\end{lemma}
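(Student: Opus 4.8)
The plan is to obtain this characterization directly from the transfer of model structures along the free--forgetful adjunction $F_{\cP}\colon \M_{\Q}^{S}\rightleftarrows[\underline{\Hom}]_{1}(\cP,\M_{\Q})\colon U_{\cP}$ set up in the preceding Model Structure theorem, combined with two purely formal facts: in a cofibrantly generated model category the fibrations are exactly the maps with the right lifting property with respect to the generating acyclic cofibrations; and, for any set of maps $I$ in a cocomplete category, the class of maps with the right lifting property with respect to $I$ is closed under forming pushouts, transfinite compositions, and retracts of maps in $I$, so that it coincides with the class of maps with the right lifting property with respect to every retract of a relative $I$-cell complex.

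First I would recall that, by the very definition of the transferred structure, a morphism $f$ of $[\underline{\Hom}]_{1}(\cP,\M_{\Q})$ is a fibration precisely when $U_{\cP}(f)$ is a fibration in $\M_{\Q}^{S}$. The product category $\M_{\Q}^{S}$ is cofibrantly generated with generating acyclic cofibrations the set $\iota_{*}J$ assembled from the generating acyclic cofibrations $J$ of $\coll(\C)$ as in the construction preceding the theorem; hence $U_{\cP}(f)$ is a fibration in $\M_{\Q}^{S}$ if and only if it has the right lifting property with respect to $\iota_{*}J$. Next I would transpose across the adjunction: for $j\in\iota_{*}J$, a commutative square with left edge $F_{\cP}(j)$ and right edge $f$ corresponds bijectively, under $F_{\cP}\dashv U_{\cP}$, to a square with left edge $j$ and right edge $U_{\cP}(f)$, and a diagonal filler of one produces a diagonal filler of the other. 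Therefore $f$ has the right lifting property with respect to $F_{\cP}(\iota_{*}J)$ if and only if $U_{\cP}(f)$ has the right lifting property with respect to $\iota_{*}J$, which by the previous sentence happens exactly when $f$ is a fibration.

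Finally, applying the second formal fact with $I=F_{\cP}(\iota_{*}J)$ inside the cocomplete category $[\underline{\Hom}]_{1}(\cP,\M_{\Q})$ (Corollary~\ref{bicomplete}), lifting against $F_{\cP}(\iota_{*}J)$ is the same as lifting against every retract of a relative $F_{\cP}(\iota_{*}J)$-cell complex, which is exactly the statement to be proved. There is no serious obstacle here: all the real content is already packaged in the transfer theorem and the two elementary closure arguments. The one point worth a remark is that the validity of the transfer --- in particular that $\M_{\Q}^{S}$ is cofibrantly generated with $\iota_{*}J$ as generating acyclic cofibrations and that $F_{\cP}(\iota_{*}J)$ permits the small object argument in $[\underline{\Hom}]_{1}(\cP,\M_{\Q})$ --- is precisely what the preceding section established; one may also note, though it is not needed for the stated equivalence, that since $F_{\cP}$ is a left adjoint it carries relative $\iota_{*}J$-cell complexes to relative $F_{\cP}(\iota_{*}J)$-cell complexes.
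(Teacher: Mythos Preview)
Your proposal is correct and follows essentially the same approach as the paper: both use the definition of fibration via the forgetful functor, characterize fibrations in $\M_{\Q}^{S}$ by the RLP against $\iota_{*}J$, and then transpose across the adjunction $F_{\cP}\dashv U_{\cP}$. Your version is more explicit about the final closure step (RLP against $F_{\cP}(\iota_{*}J)$ is equivalent to RLP against retracts of relative $F_{\cP}(\iota_{*}J)$-cell complexes), which the paper leaves implicit; one small point of presentation is that this lemma is a building block \emph{for} the Model Structure theorem rather than a consequence of it, so your opening sentence should invoke only the \emph{definition} of fibration stated there, not the completed transfer.
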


\begin{proof}Fibrations of generalized $\cP$-algebras were defined via the free-forgetful adjunction. In particular, $f:A\rightarrow B$ is a fibration of $\cP$-algebras if, and only if, $Uf:A\rightarrow B$ is a fibration of right $\Q$-modules. The map $Uf$ is a fibration if, and only if, $Uf$ has the right lifting property (RLP) with respect to a retract of something in $\iota_{*}J.$ The lemma then follows from adjunction.\end{proof}

\begin{lemma}[Classifying Trivial Fibrations]\label{classifying trivial fibrations}A map in $[\underline{\Hom}]_{1}(\cP,\M_{\Q})$ is a trivial fibration if and only if it has the right lifting property with respect to retracts of relative $F_{\cP}(\iota_{*}I)$-cell complexes.\end{lemma}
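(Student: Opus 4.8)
The plan is to run the exact analogue of the proof of Lemma~\ref{classifyingfibrations}, replacing ``fibration'' by ``trivial fibration'' throughout and transporting everything along the free--forgetful adjunction $F_{\cP}:\M_{\Q}^{S}\rightleftarrows[\underline{\Hom}]_{1}(\cP,\M_{\Q}):U_{\cP}$. Nothing new is needed beyond the formal properties of lifting problems under an adjunction together with the fact, arranged in the discussion preceding the statement, that the product model structure on $\M_{\Q}^{S}$ is cofibrantly generated by $\iota_{*}I$ and $\iota_{*}J$.

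First I would recall that the model structure on $[\underline{\Hom}]_{1}(\cP,\M_{\Q})$ is transferred along $U_{\cP}$: a homomorphism $f\colon A\rightarrow B$ of generalized $\cP$-algebras is a weak equivalence (respectively a fibration) if and only if $U_{\cP}f$ is one in $\M_{\Q}^{S}$. Intersecting the two conditions, $f$ is a trivial fibration if and only if $U_{\cP}f$ is a trivial fibration of right $\Q$-modules in the product model structure on $\M_{\Q}^{S}$. Second, since $\M_{\Q}^{S}$ is cofibrantly generated with generating cofibrations $\iota_{*}I$, a map in $\M_{\Q}^{S}$ is a trivial fibration precisely when it has the right lifting property with respect to $\iota_{*}I$, equivalently with respect to the saturation of $\iota_{*}I$ under pushout, transfinite composition and retract, i.e. with respect to all retracts of relative $\iota_{*}I$-cell complexes.

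Third, I would transport this along the adjunction. For any map $i$ in $\M_{\Q}^{S}$ and any map $f$ in $[\underline{\Hom}]_{1}(\cP,\M_{\Q})$, a lifting problem of $F_{\cP}(i)$ against $f$ is, by the adjunction, the same as a lifting problem of $i$ against $U_{\cP}f$; hence $f$ has the right lifting property with respect to $F_{\cP}(\iota_{*}I)$ if and only if $U_{\cP}f$ has the right lifting property with respect to $\iota_{*}I$. Moreover, $F_{\cP}$ is a left adjoint, so it preserves colimits and therefore carries relative $\iota_{*}I$-cell complexes to relative $F_{\cP}(\iota_{*}I)$-cell complexes and retracts to retracts; consequently ``right lifting property with respect to $F_{\cP}(\iota_{*}I)$'' coincides with ``right lifting property with respect to retracts of relative $F_{\cP}(\iota_{*}I)$-cell complexes.'' Stringing the three steps together gives that $f$ is a trivial fibration $\Longleftrightarrow$ $U_{\cP}f$ is a trivial fibration in $\M_{\Q}^{S}$ $\Longleftrightarrow$ $U_{\cP}f$ has the right lifting property with respect to $\iota_{*}I$ $\Longleftrightarrow$ $f$ has the right lifting property with respect to retracts of relative $F_{\cP}(\iota_{*}I)$-cell complexes, which is the assertion.

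There is no real obstacle here; the only points requiring a word of care are that $F_{\cP}$ genuinely commutes with the transfinite colimits used to build cell complexes (it does, being a left adjoint) and that the product model structure on $\M_{\Q}^{S}$ really is cofibrantly generated by $\iota_{*}I$, which was set up in the paragraphs before the statement. Together with Lemma~\ref{classifyingfibrations}, this lemma identifies $F_{\cP}(\iota_{*}I)$ and $F_{\cP}(\iota_{*}J)$ as generating cofibrations and generating acyclic cofibrations, which is what remains to finish the verification (via the small object argument) of the transferred model structure on $[\underline{\Hom}]_{1}(\cP,\M_{\Q})$.
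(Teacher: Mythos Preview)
Your proposal is correct and follows exactly the approach the paper intends: the paper states this lemma immediately after Lemma~\ref{classifyingfibrations} without proof, leaving it implicit that one simply reruns that argument with $\iota_*J$ replaced by $\iota_*I$ and ``fibration'' by ``trivial fibration,'' which is precisely what you have spelled out. Your added remarks about $F_{\cP}$ preserving cell complexes (as a left adjoint) and the saturation of the lifting class are accurate and make the argument more explicit than the paper's one-line version for fibrations.
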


\begin{lemma}A relative $F_{\cP}(\iota_{*}J)$ complex is a weak equivalence of right $\Q$-modules.\end{lemma}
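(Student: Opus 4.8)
The plan is to run the standard ``acyclicity'' step of a transferred model structure: analyse pushouts of free maps $F_{\cP}(j)$ through the free-forgetful adjunction $F_{\cP}:\M_{\Q}^{S}\rightleftarrows[\underline{\Hom}]_{1}(\cP,\M_{\Q}):U_{\cP}$, with $S=\obj(\cP)$, using that $\M_{\Q}$ is a monoidal model category over $\C$ and that $\cP$ is $\Sigma$-cofibrant, and conclude that a relative $F_{\cP}(\iota_{*}J)$-cell complex becomes an acyclic cofibration in $\M_{\Q}^{S}$ (in which weak equivalences are detected objectwise). First I would reduce to a single pushout: such a cell complex is a transfinite composition of pushouts of coproducts of maps $F_{\cP}(j)$, $j\in\iota_{*}J$; since $U_{\cP}$ creates filtered colimits (the monad $U_{\cP}F_{\cP}$ is built from $\otimes$ and colimits via the free-algebra formula~(\ref{freealgebra})) and acyclic cofibrations in $\M_{\Q}^{S}$ are closed under coproduct, pushout and transfinite composition, it suffices to prove that for every $j:K\to L$ in $\iota_{*}J$ and every $\cP$-algebra $A$ in $\M_{\Q}$ the underlying map in $\M_{\Q}^{S}$ of
$$\xymatrix{ F_{\cP}(K) \ar[r] \ar[d]_{F_{\cP}(j)} & A \ar[d] \\ F_{\cP}(L) \ar[r] & B }$$
is an acyclic cofibration, where $B=A\cup_{F_{\cP}(K)}F_{\cP}(L)$.

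Next I would use the filtration of $A\to B$ in $\M_{\Q}^{S}$ in the style of Schwede-Shipley and Berger-Moerdijk (see~\cite{BM03,BM07}, and~\cite{FresseBook} for the colored, enriched bookkeeping): $B=\colim_{t}B_{t}$ with $B_{0}=A$ and each $B_{t-1}\to B_{t}$ a pushout in $\M_{\Q}^{S}$ of a map assembled, coproduct-wise over sequences $x_{1},\dots,x_{t};x$, from $\cP(x_{1},\dots,x_{t};x)$, the relevant tensor powers of $A$, and the $t$-fold pushout-product power $j^{\square t}$ of $j$, all quotiented by $\Sigma_{t}$. Since $j$ is an acyclic cofibration, the pushout-product axiom makes $j^{\square t}$ a $\Sigma_{t}$-equivariant acyclic cofibration; since $\cP$ is $\Sigma$-cofibrant, $\cP(x_{1},\dots,x_{t};x)$ is suitably cofibrant as a $\Sigma_{t}$-object, so that $\cP(x_{1},\dots,x_{t};x)\otimes_{\Sigma_{t}}(-)$ carries $\Sigma_{t}$-acyclic cofibrations to acyclic cofibrations in $\M_{\Q}$. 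Hence each attaching map is an acyclic cofibration, so each $B_{t-1}\to B_{t}$ is, and therefore the transfinite composite $A\to B$ is an acyclic cofibration in $\M_{\Q}^{S}$, which finishes the reduction and the lemma.

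The main obstacle is precisely the homotopical control of the terms $\cP(x_{1},\dots,x_{t};x)\otimes_{\Sigma_{t}}j^{\square t}$: one must check that passing to $\Sigma_{t}$-coinvariants---with the $\Sigma_{t}$-action taken diagonally with the symmetric action on $\cP$---does not destroy acyclicity of the latching maps of $j^{\square t}$. This is exactly where the $\Sigma$-cofibrancy of $\cP$ is needed, and it is the step I would write out carefully; the rest is the routine transfinite-composition and adjunction bookkeeping.
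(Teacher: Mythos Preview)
Your proposal is correct and follows essentially the same route as the paper: reduce to a single pushout of a free map, filter the underlying object of the pushout by the Schwede--Shipley layers (the paper builds these explicitly via the objects $G_{x_1,\dots,x_k}(A)$ and the iterated pushout-product objects $C_{k,i}(f)$, which are exactly the latching data for your $j^{\square k}$), and observe that each layer map is an acyclic cofibration in $\M_{\Q}^{S}$. The paper's version is terser at the end, simply asserting that ``it is clear'' each $X_{k-1}\to X_k$ is a weak equivalence, whereas you correctly isolate the one nontrivial point---controlling $\cP(x_1,\dots,x_t;x)\otimes_{\Sigma_t}j^{\square t}$ via the $\Sigma$-cofibrancy of $\cP$---which the paper's proof leaves implicit.
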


\begin{proof}The proof closely follows the proof of Lemma 6.2~\cite{SS98} and will depend on analyzing pushouts in $[\underline{\Hom}]_{1}(\cP,\M_{\Q})$. Let $A$ be an object in the category $[\underline{\Hom}]_{1}(\cP,\M_{\Q})$ and let $M\longrightarrow N$ be map of objects in $\M_{\Q}$ and recall that $\iota_{x}:\M_{\Q}\longrightarrow\M_{\Q}^{S}$ is the left adjoint to the ``evaluation at object $x$'' functor. We will want to study the pushout of the diagram
$$\begin{CD}
F_{\cP}(\iota_{x}M) @>>> F_{\cP}(\iota_{x}N)\\
@VVV \\
A
\end{CD}$$in $[\underline{\Hom}]_{1}(\cP,\M_{\Q})$. The pushout will be given as a colimit, taken in right $\Q$-modules, of a sequence $$A=X_0\rightarrow X_1\rightarrow...\rightarrow X_n\rightarrow... .$$

We will build up this filtration in layers. First, let's understand $A\coprod F_{\cP}(M)$ by building up the $k$-ary relations. For each $k\ge0$ and each sequence of objects $x_1,...,x_k$ in $\cP$ we will construct an object $G_{x_1,...,x_n}(A)$ as the coequalizer of the following diagram\begin{eqnarray*}\coprod_{n\ge0}\big(\coprod_{y_1,...,y_n}\cP(y_1,...,y_n,x_1,...,x_k;-)\otimes_{\Sigma_{n}} F_{\cP}(A(y_1))\otimes...\otimes F_{\cP}(A(y_n)\big)\\
\rightrightarrows \coprod_{n\ge0}\big(\coprod_{y_1,...,y_n}\cP(y_1,...,y_n,x_1,...,x_k;-)\otimes_{\Sigma_{n}} A(y_1)\otimes...\otimes A(y_n)\big)\longrightarrow G_{x_1,...,x_k}(A).\end{eqnarray*}The top map comes from the structure of the multicategory $\cP$ and the bottom map comes from the action of the triple $F_{\cP}$ on $A$. The underlying object of the coproduct $A\coprod F_{\cP}(M)$ is$$\coprod_{k}\big(\coprod_{x_1,...,x_n}G_{x_1,...,x_k}(A)\otimes_{\Sigma_{n}} M(y_1)\otimes...\otimes M(y_n).$$ 

Now, let $f:M\rightarrow N$ be a map in $\M_{\Q}$. We will construct a right $\Q$-module $C_{k,i}(f)$ where $k\ge 0$ and $0\le i\le k$ as follows. Let $C_{k,0}=M^{\otimes k}$ and for $0<i<k$ we define $C_{k,i}$ as a pushout 

$$\begin{CD}
\unit[\Sigma_{k}]\otimes_{\Sigma_{k-i}\times\Sigma_{i}} M^{\otimes k-i}\otimes C_{k,i-1} @>>> \unit[\Sigma_{k}]\otimes_{\Sigma_{k-i}\times\Sigma_{i}} M^{\otimes k-i}\otimes N^{\otimes i}\\
@VVV                 @VVV   \\
C_{k,i-1}   @>>>     C_{k,i}.
\end{CD}$$

We can combine these constructions to get a filtration on the pushout of 
$$\begin{CD}
F_{\cP}(\iota_{x}M) @>>> F_{\cP}(\iota_{x}N)\\
@VVV \\
A.
\end{CD}$$ Let $X_{0}=A$, fix a $k$ and let fix an object $x$ in $\cP$, i.e. $x_1=...=x_k=x.$ We will define $X_{k}$ as the pushout in $\M_{\Q}^{S}$ 
$$\begin{CD}
G_{x_1,...,x_k}(A)\otimes_{\Sigma_{k}}C_{k,k-1} @>>> G_{x_1,...,x_k}(A)\otimes_{\Sigma_{k}}(\iota_{x}N)^{\otimes i}\\
@VVV                 @VVV   \\
X_{k-1}   @>>>     X_k.
\end{CD}$$The left hand vertical map comes from the map $\iota_{x}M\rightarrow A$ in $\M_{\Q}^{S}$. The pushout of the diagram is then $X=colim X_{k}$.\

Now, we may assume that $f:M\rightarrow N$ is of the form $K\circ \Q\rightarrow L\circ \Q$ where $K\rightarrow L$ runs over generating acyclic cofibrations of $\coll(\C)$. Our goal is to show that the map $A\rightarrow A\coprod_{F_{\cP}(\iota_{x}M)}F_{\cP}(\iota_{x}N)$ is a weak equivalence of right $\Q$-modules. But, since it is clear that for each $k\ge0$ $X_{k-1}\rightarrow X_{k}$ is a weak equivalence, we are done. \end{proof}

\begin{proof}[Proof of Theorem]The simplicial category $[\underline{\Hom}]_{1}(\cP,\M_{\Q})$ is cocomplete by~\ref{bicomplete} and it is clear that the class of weak equivalences and fibrations are closed under retracts and that the class of weak equivalences satisfies the ``2-out-of-3'' property. Cofibrations are defined via a lifting property, and so it is easy to check that they are closed under retracts.\ 

Given an arbitrary map $f$ in $[\underline{\Hom}]_{1}(\cP,\M_{\Q})$ we can apply the small object argument to produce a factorization $f=p\circ i$ where $i$ is in $F_{\cP}(\iota_{*}I)$ and $p$ has the right lifting property with respect to $i$. The our lemma~\ref{classifying trivial fibrations} implies that $p$ is an acyclic fibration. In a similar manner, we factor $f=q\circ j$, where $j$ is in $F_{\cP}(\iota_{*}J)$ and $q$ has the right lifting with respect to $j$. The lemma~\ref{classifyingfibrations} implies that $q$ is a fibration of multicategories.\

Finally, we check that given the square 

$$\begin{CD}
A @>f>> C\\
@VViV        @VVpV\\
B @>g>> D
\end{CD}$$ with $i$ a cofibration and $p$ a fibration. If $p$ is also a weak equivalence, then we find a lift by the classification of cofibrations. If $i$ is a weak equivalence, then we factor $i=q\circ j:A\hookrightarrow\widetilde{B}\twoheadrightarrow B$ where $q$ is a fibration and $j$ is an acyclic cofibration. Since we have shown that every acyclic cofibration is a weak equivalence, we know that $j$ is a weak equivalence. The ``2-out-of-3'' property for weak equivalences now implies that $q$ is an acylic fibration.\

Since $j$ is an acyclic cofibration, and $p$ is a fibration, we know that $p$ has the RLP with respect to $j$.  In other words, we have a lift $h:\widetilde{B}\longrightarrow C$ so that $j\circ h =f$ and $q\circ g=p\circ h$. \

Now, since $i$ is an acyclic cofibration and $q$ is a trivial fibration, there exists a retract $s$ of $q$ with $s\circ i = j$. The composite $h\circ s$ provides the desired lift.\end{proof}

\begin{corollary}The category $[\underline{\Hom}]_{1}(\cP,\M_{\Q})$ is a simplicial model category.\end{corollary}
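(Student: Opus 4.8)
The plan is to equip $[\underline{\Hom}]_{1}(\cP,\M_{\Q})$ with a simplicial enrichment, together with tensors and cotensors over $\sSets$, and then to verify Quillen's pushout--product axiom SM7. Since $\M_{\Q}$ is a monoidal model category over $\C=\sSets$ (Theorem~\ref{model structure on right modules} together with the proposition that $\M_{\Q}$ is monoidal over $\C$), it is in particular a simplicial model category, and hence so is the product $\M_{\Q}^{S}$, with tensor, cotensor and mapping complexes formed coordinatewise. The free $\cP$-algebra monad on $\M_{\Q}^{S}$ (whose algebras are the objects of $[\underline{\Hom}]_{1}(\cP,\M_{\Q})$) is built out of coproducts, the symmetric monoidal product on $\M_{\Q}$, and the $\sSets$-actions coming from the spaces of operations of $\cP$, and is therefore a $\sSets$-enriched monad; consequently $[\underline{\Hom}]_{1}(\cP,\M_{\Q})$ is cotensored over $\sSets$, with the cotensor $A^{K}$ created by the forgetful functor $U_{\cP}$, i.e.\ $U_{\cP}(A^{K})=(U_{\cP}A)^{K}$. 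In particular $U_{\cP}$ is a simplicial functor, so its left adjoint $F_{\cP}$ becomes a simplicial left adjoint, which yields the identification $F_{\cP}(B\otimes K)\cong F_{\cP}(B)\otimes K$ for $B\in\M_{\Q}^{S}$, $K\in\sSets$. The tensor $A\otimes K$ then exists for every $\cP$-algebra $A$ since $[\underline{\Hom}]_{1}(\cP,\M_{\Q})$ is cocomplete (Corollary~\ref{bicomplete}): write $A$ as the reflexive coequalizer $F_{\cP}U_{\cP}F_{\cP}U_{\cP}A\rightrightarrows F_{\cP}U_{\cP}A\to A$ and let $A\otimes K$ be the coequalizer of the diagram obtained by applying $-\otimes K$, using the identity above. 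Setting $\Map(A,B)_{n}:=[\underline{\Hom}]_{1}(\cP,\M_{\Q})(A\otimes\Delta^{n},B)$ produces the mapping complexes, and the two-variable adjunction relating $\otimes$, $\Map$ and $(-)^{(-)}$ is then formal (cf.~\cite[Ch.~9]{Hir03}).

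For SM7, fix a cofibration $j\colon K\to L$ of simplicial sets. The class of maps $i$ in $[\underline{\Hom}]_{1}(\cP,\M_{\Q})$ whose pushout--product $i\,\square\,j$ with $j$ is a cofibration is closed under pushout, transfinite composition and retract, so, the model structure being cofibrantly generated with generating cofibrations $F_{\cP}(\iota_{*}I)$ (whence every cofibration is a retract of a relative $F_{\cP}(\iota_{*}I)$-cell complex, cf.\ Lemma~\ref{classifying trivial fibrations}), it suffices to treat $i=F_{\cP}(\iota_{x}f)$ with $f\colon M\to N$ a generating (acyclic) cofibration of $\M_{\Q}$ and $x\in\obj(\cP)$. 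Both $F_{\cP}$ and $\iota_{x}$ (the left adjoint to evaluation at $x$) are simplicial left adjoints, hence commute with pushouts and with tensoring by simplicial sets, so that
\[
i\,\square\,j\;\cong\;F_{\cP}\bigl(\iota_{x}(f\,\square\,j)\bigr),
\]
where $f\,\square\,j$ is the pushout--product formed inside $\M_{\Q}$. Since $\M_{\Q}$ is a simplicial model category, $f\,\square\,j$ is a cofibration of right $\Q$-modules, and $F_{\cP}\iota_{x}$ is left Quillen, so $i\,\square\,j$ is a cofibration of $\cP$-algebras.

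It remains to handle the acyclic cases. If $j$ is a trivial cofibration (or if $f$ is), then SM7 for the simplicial model category $\M_{\Q}$ makes $f\,\square\,j$ a \emph{trivial} cofibration of right $\Q$-modules; hence $\iota_{x}(f\,\square\,j)$ is a trivial cofibration of $\M_{\Q}^{S}$, that is, a retract of a relative $\iota_{*}J$-cell complex, and therefore $i\,\square\,j\cong F_{\cP}(\iota_{x}(f\,\square\,j))$ is a retract of a relative $F_{\cP}(\iota_{*}J)$-cell complex. Such a map is a cofibration and is a weak equivalence, by the lemma that a relative $F_{\cP}(\iota_{*}J)$-cell complex is a weak equivalence of right $\Q$-modules together with the closure of weak equivalences under retracts. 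Thus $i\,\square\,j$ is a trivial cofibration, which completes SM7, and $[\underline{\Hom}]_{1}(\cP,\M_{\Q})$ is a simplicial model category. The main obstacle is the content of the second paragraph: verifying that $F_{\cP}\dashv U_{\cP}$ is genuinely a \emph{simplicial} adjunction (so that $F_{\cP}(B)\otimes K\cong F_{\cP}(B\otimes K)$), and that $F_{\cP}$ and $\iota_{x}$ pass through the pushout--product as displayed. Once that identification is secured, the simplicial pushout--product axiom for $\cP$-algebras reduces entirely to the known one for $\M_{\Q}$ supplemented by the cell-complex lemma already established in the appendix.
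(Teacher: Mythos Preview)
Your argument is correct and follows the same route as the paper: define the tensor on $\cP$-algebras by the reflexive coequalizer, reduce SM7 to the generating (acyclic) cofibrations $F_{\cP}(\iota_{*}I)$ and $F_{\cP}(\iota_{*}J)$, and then use that $F_{\cP}$ is a simplicial left adjoint to push the pushout--product down to $\M_{\Q}^{S}$ and ultimately to $\M_{\Q}$. The paper compresses steps two and three into a single sentence (``Since the monad $F_{\cP}$ \ldots\ is left adjoint to the forgetful functor we can reduce this issue to proving that $\M_{\Q}^{S}$ satisfies SM7''), whereas you spell out the identification $i\,\square\,j\cong F_{\cP}(\iota_{x}(f\,\square\,j))$ and invoke the $F_{\cP}(\iota_{*}J)$-cell lemma explicitly for acyclicity; but the underlying strategy is identical.
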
 

\begin{proof}The tensor of an $F_{\cP}$-algebra $A$ and a simplicial set $K$ is given by a reflexive coequalizer$$F_{\cP}((F_{\cP}A)\otimes K)\rightrightarrows F_{\cP}(A\otimes K)\longrightarrow A\otimes X.$$ We may assume that $i$ is a map in either $F_{\cP}(\iota_*I)$ or $F_{\cP}(\iota_*J)$. Since the monad $F_{\cP}$, viewed as a functor, is left adjoint to the forgetful functor we can reduce this issue to proving that the product category $\M_{\Q}^{S}$ satisfies $SM7$, which then reduces to showing that $\M_{\Q}$ satisfies $SM7$, and we it is known that $\M_{\Q}$ is a simplicial model category.\end{proof} 

It is now easy to check that the category of pointed $\cP$-$\Q$-bimodules is simplicially Quillen equivalent to $[\underline{\Hom}]_{1}(\cP,\M_{\Q})$. From this point on we will refer to $[\underline{\Hom}]_{1}(\cP,\M_{\Q})$ as the category of pointed $\cP$-$\Q$-bimodules and denote this category by ${}_{\cP}\M^*_{\Q}$.


\begin{thebibliography}{99}
\bibitem[Bar10]{Bar10} C. Barwick, On left and right model categories and left and right Bousfield localizations, to appear in \textit{Homology Homotopy Appl.}
\bibitem[Ber06]{Ber06} J. E. Bergner, Rigidification of algebras over multi\nobreakdash-sorted theories, \textit{Algebr. Geom. Topol.} {\bf 6} (2006) 1925\nobreakdash--1955.
\bibitem[Be]{Bergner} J. Bergner, \textit{A model category structure
on the category of simplicial categories},\textit{Transactions of the AMS}
\bibitem[BM03]{BM03} C. Berger \and I. Moerdijk, Axiomatic homotopy theory for operads, \textit{Comment. Math. Helv.} \textbf{78} (2003), no.\ 4, 805--831.

\bibitem[BM06]{BM06} C. Berger \and I. Moerdijk, The Boardman-Vogt resolution of
operads in monoidal model categories, \textit{Topology} \textbf{45} (2006), no.\ 5,
807--849.
\bibitem[BM07]{BM07} C. Berger \and I. Moerdijk, Resolution of coloured operads and rectification of homotopy algebras, in: \textit{Categories in Algebra, Geometry and Mathematical Physics (Street Festschrift)}, Contemp. Math. vol 431, Amer. Math. Soc., Providence, 2007, 31--58.

\bibitem[BM08]{BM08} C. Berger \and I. Moerdijk, \textit{On the derived category of an algebra over an operad}, preprint, 2008. See arXiv:0801.2664. 

\bibitem[Bor94]{Bor94} F. Borceux, \textit{Handbook of Categorical Algebra 2. Categories and Structures}, Encyclopedia of Mathematics and its Applications, 51. Cambridge University Press, Cambridge, 1994.
\bibitem[BV73]{BV} J. M. Boardman \and R. M. Vogt, \textit{Homotopy invariant algebraic
structures on topological spaces}, Lecture Notes in Math., 347, Springer, Berlin-New York, 1973.
\bibitem[CGMV10]{CGMV} C. Casacuberta, J. J. Guti\'errez, I. Moerdijk, and R. M. Vogt, Localization of algebras over coloured operads, \textit{Proc. London Math. Soc.} (3) \textbf{101} (2010), 105--136.
\bibitem[CM09]{MC1} D.Cisinski \and I. Moerdijk, \textit{Dendroidal sets as models for homotopy operads}
, preprint, arXiv:0902.1954. 
\bibitem[CM10]{MC2} D. Cisinski \and I. Moerdijk,\textit{Dendroidal segal spaces and $\infty$-operads},preprint, arXiv:1010.4956.
\bibitem[CM]{MC3} D. Cisinski \and I. Moerdijk, \textit{Dendroidal sets and simplicial operads}, in preparation.
\bibitem[Cra95]{Cra95} S. E. Crans, Quillen closed model structures for sheaves, \textit{J. Pure Appl. Algebra} \textbf{101} (1995), 35--57.

\bibitem[D1]{D} D. Dugger, \textit{Classification spaces of maps in model categories.} Preprint, 2007. See arXiv:math/0604537.

\bibitem[DH]{DH} W.~G. Dwyer and K. Hess, \textit{Long knots and maps between operads.} Preprint, 2010. See arXiv:math/1006.0874. 

\bibitem[DK1]{DK1} W.~G. Dwyer and D.~M. Kan, \textit{Simplicial localization of categories}, J. Pure Appl. Algebra \textbf{17} (1980), no. 3, 267--284.

\bibitem[DK2]{DK2} W.~G. Dwyer and D.~M. Kan, \textit{Calculating simplicial localizations}, J. Pure Appl. Algebra \textbf{18} (1980), no. 1, 17--35.

\bibitem[DK3]{DK3} W.~G. Dwyer and D.~M. Kan, \textit{Function complexes in homotopical algebra}, Topology \textbf{19} (1980), 427--440.

\bibitem[EM06]{EM} A. D. Elmendorf \and M. A. Mandell, Rings, modules, and algebras in infinite loop space theory, \textit{Adv. Math.} \textbf{205} (2006), no.\ 1, 163--228.
\bibitem[Fre09]{FresseBook} B. Fresse, \textit{Modules over operads and functors}, Lecture Notes in Math., vol.~1967. Springer-Verlag, Berlin-New York, 2009.
\bibitem[FV11]{FV} z. Fiedorowicz \and R.M. Vogt, \emph{Interchanging $A_{\infty}$ and $E_n$ structures}, Preprint 2011. See arXiv:math/1102.1311v2. 
\bibitem[Gin05]{G} V. Ginzburg, \emph{Lectures on noncommutative geometry}, 2005. 
\bibitem[GK]{GK}V. Ginzburg, M. Kapranov, \emph{Koszul duality for operads}, Duke Math. J. \textbf{76} (1995), 203--272.
\bibitem[GJ]{GJ} P. G. Goerss \and J.F. Jardine, \emph{Simplicial homotopy theory}, Progress in Mathematics \textbf{174}, Birkh\"{a}user, Boston, 1999. 
\bibitem[Hir03]{Hir03} P. S. Hirschhorn, \textit{Model Categories and Their Localizations}, Math. Surveys and Monographs, vol.~99, Amer. Math. Soc., Providence, 2003.
\bibitem[Hov99]{Hovey} M. Hovey, \textit{Model Categories}, Math. Surveys and Monographs, vol.~63, Amer. Math. Soc., Providence, 1999.
\bibitem[Kapranov-Manin]{KM}M. M. Kapranov and Y. Manin -- \emph{Modules and Morita theorem for operads}, Amer. J. Math. \textbf{123} (2001), 811--838.
\bibitem[Kel82]{Kel82} G. M. Kelly, \textit{Basic Concepts of Enriched Category Theory},  London Math. Soc. Lecture Notes, vol. 64, Cambridge University Press, Cambridge-New York, 1982.
\bibitem[Kro07]{Kro} T. A. Kro, Model structure on operads in orthogonal spectra, \textit{Homology Homotopy Appl.} \textbf{9} (2007), no.\ 2, 397--412.

\bibitem[Kontsevich]{Kontsevich99} M. Kontsevich, \textit{Operads and motives in deformation quantization}, Lett. Math. Phys. \textbf{48} (1999), 35-72. 

\bibitem[Lei04]{Lei} T. Leinster, \textit{Higher Operads, Higher Categories}, London Math. Soc. Lecture Notes, vol. 298, Cambridge University Press, Cambridge, 2004.
\bibitem[Lew91]{Lew91} L. G. Lewis, Is there a convenient category of spectra? \textit{J. Pure Appl. Algebra} \textbf{73} (1991), no.\ 3, 233--246.
\bibitem[MMSS01]{MMSS01} M. A. Mandell, J. P. May, S. Schwede, and B. Shipley, Model categories of diagram spectra, \textit{Proc. London Math. Soc.} \textbf{82} (2001), no. 2, 441--512.
\bibitem[Mar04]{Markl}M. Markl, \emph{Homotopy algebras are homotopy algebras}, Forum Math. \textbf{16} (2004), 129--160.
\bibitem[MSS]{MarklShniderStasheff} M. Markl, S. Shnider, J. Stasheff, \emph{Operads in algebra, topology and physics}, Mathematical Surveys and Monographs \textbf{96}, American Mathematical Society, 2002.
\bibitem[May]{May}J. P. May -- \emph{The Geometry of Iterated Loop Spaces}, Lect. Notes Math. \textbf{271} (1972).
\bibitem[M]{M} I. Moerdijk, unpublished manuscript, 2008. 
\bibitem[Quillen]{Q}D. G. Quillen -- \emph{Homotopical algebra}, Lect. Notes Math. \textbf{43} (1967).
\bibitem[Rezk96]{Rezk}C. Rezk,\emph{Spaces of algebra structures and cohomology of operads}, PhD Thesis, Massachusetts Institute of Technology, 1996.
\bibitem[Rezk02]{Rezk2}C. Rezk -- \emph{Every homotopy theory of simplicial algebras admits a proper model}, Topology Appl. \textbf{119} (2002), 65--94.
\bibitem[R]{Me1}M. Robertson--\emph{A Model Category Structure for Enriched Symmetric Multicategories}, Preprint, 2010.  
\bibitem[R11]{Me3}M. Robertson--\emph{Periodic Homology Theories and Spaces of Operadic Structures}, in preparation. 
\bibitem[SS00]{SS}S. Schwede and B. Shipley -- \emph{Algebras and modules in monoidal model categories}, Proc. London Math. Soc. \textbf{80} (2000), 491--511.
\bibitem[Shi04]{Shi04} B. Shipley, A convenient model category for commutative ring spectra. in: \textit{Homotopy Theory: Relations with Algebraic Geometry, Group Cohomology, and Algebraic K-Theory}, Contemp. Math., vol. 346, Amer. Math. Soc., Providence, 2004, 473--484.
\bibitem[S06]{S06}D.~P. Sinha, \emph{Operads and knot spaces}, J. Amer. Math. Soc. \textbf{19}
  (2006), no.~2, 461--486 (electronic).
\bibitem[Sp]{Sp}M. Spitzweck -- \emph{Operads, algebras and modules in general model categories}, PhD thesis, Bonn (2001).
\bibitem[St63]{St}J. Stasheff -- \emph{Homotopy associativity of $H$-spaces}, Trans. Amer. Math. Soc. \textbf{108} (1963), 275-312.
\bibitem[Tab]{tab} G. Tabuada, \textit{Une structure de cat\'egorie
de mod\`eles de Quillen sur la cat\'egorie des dg-cat\'egories},
Comptes Rendus de l'Académie de Sciences de Paris $\mathbf{340}$ (2005), 15--19.
\bibitem[T]{Toen}B. To\"en -- \emph{The homotopy theory of $dg$-categories and derived Morita theory}, Invent. Math. \textbf{167} (2007), 615--667.
\bibitem[W07]{Weiss}I. Weiss--\emph{Dendroidal sets}, PhD Thesis, Utrecht University, 2007. 
\end{thebibliography}
\end{document}